\documentclass[a4paper,10pt]{amsart}
\usepackage[english]{babel}
\usepackage[leqno]{amsmath}
\usepackage{amssymb,amsthm}
\usepackage{amscd}
\usepackage{enumerate}
\usepackage{epsfig}

\input{xy}
\xyoption{all}

\newtheorem{thm}{Theorem}[subsection]
\newtheorem{lemma}[thm]{Lemma}
\newtheorem{lemmadefi}[thm]{Lemma - Definition}

\newtheorem{prop}[thm]{Proposition}

\newtheorem{cor}[thm]{Corollary}

\newtheorem{fact}[thm]{Fact}

\theoremstyle{remark}
\newtheorem{remark}[thm]{Remark}
\theoremstyle{definition}
\newtheorem{defi}[thm]{Definition}
\newtheorem{nota}[thm]{}
\newtheorem{constr}[thm]{Construction}

\numberwithin{equation}{section}

\newenvironment{sis}{\left\{\begin{aligned}}{\end{aligned}\right.}

\newtheorem{example}[thm]{Example}

\newcommand{\w}{\widetilde}
\newcommand{\ov}{\overline}
\newcommand{\un}{\underline}

\renewcommand{\S}{\mathcal{S}}

\newcommand{\Ker}{\operatorname{Ker}}
\renewcommand{\Im}{\operatorname{Im}}

\newcommand{\val}{\operatorname{val}}
\newcommand{\Stab}{\operatorname{Stab}}

\newcommand{\Alb}{\operatorname{Alb}}

\newcommand{\Jac}{\operatorname{Jac}}
\newcommand{\Aut}{\operatorname{Aut}}

\newcommand{\GL}{\operatorname{GL}}

\newcommand{\Q}{\mathbb{Q}}
\newcommand{\Z}{\mathbb{Z}}
\newcommand{\R}{\mathbb{R}}
\newcommand{\N}{\mathbb{N}}

\newcommand{\CC}{\mathcal{C}}
\newcommand{\DD}{\mathcal{D}}
\newcommand{\BB}{\mathcal{B}}
\newcommand{\II}{\mathcal{I}}
\renewcommand{\L}{\mathcal{L}}
\newcommand{\A}{\mathcal{A}}


\newcommand{\girth}{{\rm girth}}


\newcommand{\Vor}{{\rm Vor}}


\newcommand{\prin}{\sigma_{{\rm prin}}}
\newcommand{\Cpr}{C_{\rm prin}}


\renewcommand{\O}{\Omega_g}
\newcommand{\Ort}{\Omega_g^{\rm rt}}

\newcommand{\Null}{{\rm Null}}


\newcommand{\Mgt}{M_g^{\rm tr}}

\newcommand{\Mgpl}{M_g^{{\rm tr, pl}}}

\newcommand{\Agt}{A_g^{\rm tr}}
\newcommand{\Agcom}{A_g^{{\rm tr}, \Sigma}}
\newcommand{\AgV}{A_g^{{\rm tr, V}}}
\newcommand{\Agzon}{A_g^{{\rm zon}}}
\newcommand{\Aggr}{A_g^{\rm  gr}}
\newcommand{\Agco}{A_g^{\rm cogr}}
\newcommand{\Agcogr}{A_g^{\rm gr, cogr}}

\newcommand{\tgt}{t_g^{\rm tr}}


\newcommand{\Mg}{\mathcal{M}_g}
\newcommand{\Mgb}{\ov{\mathcal{M}_g}}
\newcommand{\Ag}{\mathcal{A}_g}
\newcommand{\Agb}{\ov{\mathcal{A}_g}^V}
\newcommand{\tg}{{\rm t}_g}
\newcommand{\tgb}{{\ov{\rm t}_g}}
\newcommand{\Mgpla}{\mathcal{M}_g^{\rm pl}}
\newcommand{\Agcogra}{\mathcal{A}_g^{\rm cogr}}
\newcommand{\Aggrco}{\mathcal{A}_g^{\rm gr,cogr}}
\newcommand{\Sg}{\mathcal{S}_g}
\newcommand{\Rg}{\mathcal{R}_g}
\newcommand{\Ng}{\mathcal{N}_g}


\newcommand{\Int}{\operatorname{Int}}


\newcommand{\Zon}{\operatorname{Zon}}
\newcommand{\Cogr}{\operatorname{Cogr}}
\newcommand{\Gr}{\operatorname{Gr}}
\newcommand{\Grcogr}{\operatorname{Gr-cogr}}

\begin{document}

\title{On the tropical Torelli map}

\author{Silvia Brannetti}
\address{Dipartimento di Matematica,
Universit\`a Roma Tre,
Largo S. Leonardo Murialdo 1,
00146 Roma (Italy)}
\email{brannett@mat.uniroma3.it}

\author{Margarida Melo}
\address{Departamento de Matem\'atica da Universidade de Coimbra,
Largo D. Dinis, Apartado 3008, 3001 Coimbra (Portugal)}
\email{mmelo@mat.uc.pt}

\author{Filippo Viviani}
\address{
Dipartimento di Matematica,
Universit\`a Roma Tre,
Largo S. Leonardo Murialdo 1,
00146 Roma (Italy)}
\email{viviani@mat.uniroma3.it}

\thanks{The second author was partially supported by a Funda\c{c}\~ao para a Ci\^encia e Tecnologia doctoral grant. The third author was partially supported by FCT-Ci\^encia2008.}

\keywords{Tropical curves, tropical abelian varieties, Torelli map, Torelli theorem, Schottky problem,
planar tropical curves, stacky fans, moduli spaces, toroidal compactifications.}


\begin{abstract}
We construct the moduli spaces of tropical curves and tropical
principally polarized abelian varieties, working in the
category of (what we call) stacky fans.
We define the tropical Torelli map between these two moduli spaces
and we study the fibers (tropical Torelli theorem) and the image of this map
(tropical Schottky problem).
Finally we determine the image of the planar tropical curves
via the tropical Torelli map and we use it to give a positive
answer to a question raised by Namikawa on the compactified classical
Torelli map.
\end{abstract}

\maketitle



\section{Introduction}

\begin{nota}{\emph{The problem}}

The classical Torelli map $\tg:\Mg\to \Ag$
is the modular map from the
moduli space $\Mg$ of smooth and projective curves of genus $g$
to the moduli space $\Ag$ of principally polarized abelian varieties
of dimension $g$, sending a curve $C$ into its Jacobian variety
$\Jac(C)$, naturally endowed with the principal polarization given
by the class of the theta divisor $\Theta_C$. The Torelli map
has been widely studied
as it allows to relate the study of curves
to the study of linear (although higher-dimensional)
objects, i.e. abelian varieties.
Among the many known results on the Torelli map $\tg$, we
mention: the injectivity of the map $\tg$
(proved by Torelli in \cite{torelli}) and the many different
solutions to the so-called Schottky problem, i.e. the
problem of characterizing the image of $\tg$ (see the nice survey of
Arbarello in the appendix of \cite{Mum}).

The aim of this paper is to define and study a tropical analogous
of the Torelli map. Tropical geometry is a recent branch of
mathematics that establishes deep relations
between algebro-geometric and purely combinatorial objects.
For an introduction to tropical geometry,
see the surveys \cite{MIK4}, \cite{SS1},
\cite{Stur}, \cite{Gat}, \cite{MIK3}, \cite{Kat},
\cite{MIK0}, or the books in preparation \cite{MIK},
\cite{McLS}.

Ideally, every construction in algebraic geometry should have a combinatorial
counterpart in tropical geometry. One may thus hope to obtain results in algebraic geometry
by looking at the tropical (i.e. combinatorial) picture first and then trying to transfer the results back to
the original algebro-geometric setting. For instance,
this program has been carried
out successfully for many problems of real and
complex enumerative geometry, see for example \cite{IKS},
\cite{MIK5}, \cite{NS}, \cite{GM2}, \cite{GM}, \cite{CJM}, \cite{MR}, \cite{BM}, \cite{FM}.


In the paper \cite{MZ}, Mikhalkin and Zharkov
studied abstract tropical curves and tropical abelian varieties.
They construct the Jacobian $\Jac(C)$ and observe that the naive generalization of the
Torelli theorem, namely that a curve $C$ is determined by its
Jacobian $\Jac(C)$, is false in this tropical setting.
However, they speculate that this naive generalization
should be replaced by the statement that the tropical Torelli
map $\tgt:\Mgt\to \Agt$
has tropical degree one, once it has been properly defined!

In \cite{CV1}, Caporaso and Viviani determine when two tropical
curves have the same Jacobians. They use this to prove that the tropical
Torelli map is indeed of tropical degree one, assuming the existence of
the moduli spaces $\Mgt$ and $\Agt$
as well as the existence of the tropical Torelli map $\tgt:\Mgt\to
\Agt$, subject to some natural properties.
Indeed, a construction of the
moduli spaces $\Mgt$ and $\Agt$ for every $g$ remained open
so far, at least to our knowledge. Though, the moduli
space of $n$-pointed tropical rational curves $M_{0, n}^{\rm tr}$
was constructed by different authors
(see \cite{SS2}, \cite{MIK1}, \cite{GKM}, \cite{KM}).
\end{nota}

\begin{nota}{\emph{The results}}

The aim of the present paper is to define the moduli spaces
$\Mgt$ and $\Agt$, the tropical Torelli map
$\tg:\Mgt\to \Agt$ and to investigate an analogue of the
Torelli theorem and of the Schottky problem.

With that in mind, we introduce slight generalizations in the definition
of tropical curves and tropical principally polarized abelian varieties.
Throughout this paper, a \emph{tropical curve} $C$ of genus $g$ is given by a marked
metric graph $(\Gamma,w,l)$,
where $(\Gamma,l)$ is a metric graph and $w:V(\Gamma)\to \Z_{\geq 0}$ is
a weight function defined on the set $V(\Gamma)$ of vertices of
$\Gamma$, such that $g=b_1(\Gamma)+
|w|$, where $|w|:=\sum_{v\in V(\Gamma)} w(v)$ is the total weight of the
graph, and the marked graph $(\Gamma,w)$ satisfies a
stability condition (see Definitions \ref{mark-graph} and
\ref{pseudo-trop}).
A (principally polarized) \emph{tropical abelian variety}
$A$ of dimension $g$ is a real torus $\R^g/\Lambda$ as before,
together with a flat semi-metric coming from a positive
semi-definite quadratic form $Q$ with rational null-space
(see Definition \ref{ps-abvar}).
To every tropical curve $C=(\Gamma,w,l)$ of genus $g$, it is
associated a tropical abelian variety of dimension $g$, called
the \emph{Jacobian} of $C$ and denoted by $\Jac(C)$, which is given
by the real torus $(H_1(\Gamma,\R)\oplus \R^{|w|})/(H_1(\Gamma,\Z)
\oplus \Z^{|w|})$, together with the positive semi-definite
quadratic form $Q_{(\Gamma,l)}$ which vanishes on $\R^{|w|}$ and
is given on $H_1(\Gamma,\R)$ by $Q_{(\Gamma,l)}(\sum_{e\in
E(\Gamma)}n_e \cdot e)=\sum_{e\in E(\Gamma)} n_e^2 \cdot l(e)$.
The advantage of such a generalization in the definition of tropical curves and
tropical abelian varieties is that the moduli spaces we will construct
are closed under specializations (see subsection \ref{ps-trop} for more details).



The construction of the moduli spaces of tropical
curves and tropical abelian varieties is performed within
the category of what we call \emph{stacky fans} (see section \ref{manifold}).
A stacky fan is, roughly speaking,
a topological space given by a collection of quotients of rational polyhedral cones, called
cells of the stacky fan,
whose closures are glued together along their boundaries via integral
linear maps (see definition \ref{pol-compl}).

The moduli space $\Mgt$ of tropical curves of genus $g$ is a stacky fan
with cells $C(\Gamma,w)=\R_{> 0}^{|E(\Gamma|)}/\Aut(\Gamma,w)$,
where $(\Gamma,w)$ varies among stable marked graphs of genus $g$,
consisting of all the tropical curves whose underlying
marked graph is equal to $(\Gamma,w)$ (see definition \ref{Mgt}).
The closures of two cells
$\ov{C(\Gamma,w)}$ and $\ov{C(\Gamma',w')}$ are glued together along the faces
that correspond
to common specializations of $(\Gamma,w)$ and $(\Gamma',w')$
(see Theorem \ref{Mgt-pol}).
Therefore, in $\Mgt$, the closure of a cell $C(\Gamma,w)$ will be equal to a
disjoint union of lower dimensional cells $C(\Gamma',w')$ corresponding to
different specializations of $(\Gamma,w)$.

We describe the maximal cells and the codimension one
cells of $\Mgt$ and we prove that $\Mgt$ is pure dimensional and connected through codimension
one (see Proposition \ref{prop-Mgt}). Moreover the topology with which $\Mgt$ is endowed is shown in \cite{Cap} to be Hausdorff. A Teichm\"{u}ller-type approach to the construction of $\Mgt$ was suggested by Mikhalkin in \cite[3.1.]{MIK0}, using the theory of Outer Spaces from Geometric Group Theory. It would be very interesting to investigate this different approach and compare it to ours.

The moduli space $\Agt$ of tropical abelian varieties of
dimension $g$ is first constructed as a topological space
by forming the quotient $\Ort/\GL_g(\Z)$, where
$\Ort$ is the cone of positive semi-definite quadratic forms
in $\R^g$ with rational null space and the action of
$\GL_g(\Z)$ is via the usual arithmetic equivalence (see definition
\ref{Agtr}).
In order to put a structure of stacky fan on $\Agt$, one has to specify a $\GL_g(\Z)$-admissible
decomposition $\Sigma$ of $\Ort$ (see definition \ref{decompo}),
i.e. a fan decomposition of $\Ort$ into (infinitely many) rational
polyhedral cones that are stable under the action of $\GL_g(\Z)$
and such that there are finitely many equivalence classes of cones
modulo $\GL_g(\Z)$. Given such a $\GL_g(\Z)$-admissible
decomposition $\Sigma$ of $\Ort$, we endow $\Agt$ with the
structure of a stacky fan, denoted by $\Agcom$,
in such a way that the cells of $\Agcom$ are exactly the
$\GL_g(\Z)$-equivalence
classes of cones in $\Sigma$ quotiented out by their
stabilizer subgroups (see Theorem \ref{AgS}).

Among all the known $\GL_g(\Z)$-admissible decompositions of $\Ort$,
one will play a special role in this paper, namely the (second)
Voronoi decomposition which we denote by $V$. The cones of
$V$ are formed by those elements $Q\in \Ort$ that have the same
Dirichlet-Voronoi polytope $\Vor(Q)$
(see definition \ref{Vor-dec}). We denote the corresponding
stacky fan by $\AgV$ (see definition
\ref{AgV}). We describe the maximal cells and the codimension one
cells of $\AgV$ and we prove that $\AgV$ is pure-dimensional and connected
through codimension one (see Proposition \ref{prop-AgV}).
$\AgV$ admits an important stacky subfan,
denoted by $\Agzon$, formed by all the
cells of $\AgV$ whose associated Dirichlet-Voronoi polytope
is a zonotope.
We show that $\GL_g(\Z)$-equivalence classes of zonotopal Dirichlet-Voronoi polytopes
(and hence the cells of $\Agzon$) are in bijection with simple
matroids of rank at most $g$ (see Theorem \ref{mat-zon}).

After having defined $\Mgt$ and $\AgV$, we show that the tropical
Torelli map
$$\begin{aligned}
\tgt: \Mgt & \to \AgV\\
C & \mapsto \Jac(C),
\end{aligned}$$
is a map of stacky fans (see
Theorem \ref{tr-map}).

We then prove a Schottky-type and a Torelli-type theorem
for $\tgt$. The Schottky-type theorem says that
$\tgt$ is a full map whose image is equal to the stacky subfan
$\Agcogr\subset \Agzon$, whose cells correspond
to cographic simple matroids of rank at most $g$ (see Theorem
\ref{Sch}). The Torelli-type theorem says that $\tgt$ is of degree
one onto its image (see Theorem \ref{deg-one}). Moreover,
extending the results of Caporaso and Viviani \cite{CV1} to our
generalized tropical curves (i.e. admitting also weights),
we determine when two tropical
curves have the same Jacobian (see Theorem \ref{tor}).

Finally, we define the stacky subfan $\Mgpl\subset \Mgt$
consisting of planar tropical curves (see definition
\ref{Mg-planar}) and
the stacky subfan $\Aggr\subset \Agzon$ whose cells correspond
to graphic simple matroids of rank at most $g$
(see definition \ref{Gra}).
We show that $\Aggr$ is also equal
to the closure inside $\AgV$ of the so-called principal cone
$\prin^0$ (see Proposition \ref{desc-gr}).
We prove that $\tgt(C)\in \Aggr$ if and only if $C$ is a planar
tropical curve and that
$\tgt(\Mgpl)=\Agcogr:=\Agco \cap \Aggr$ (see Theorem \ref{tor-planar}).

As an application of our tropical results, we study a problem raised by Namikawa
in \cite{NamT} concerning the extension $\tgb$ of the (classical) Torelli map
from the Deligne-Mumford compactification $\Mgb$ of $\Mg$ to the (second)
Voronoi toroidal compactification $\Agb$ of $\Ag$ (see subsection
\ref{Nam-section} for more details).
More precisely, in Corollary \ref{Nam-conj}, we provide
a characterization of the stable curves whose dual graph is planar
in terms of their image via the compactified Torelli map $\tgb$, thus answering affirmatively
to \cite[Problem (9.31)(i)]{NamT}. The relation between our tropical moduli spaces
$\Mgt$ (resp. $\AgV$) and the compactified moduli spaces $\Mgb$ (resp. $\Agb$) is that
there is a natural bijective correspondence between the cells of the former
and  the strata of the latter; moreover these bijections are compatible
with the Torelli maps $\tgt$ and $\tgb$. This allows us to apply our results about
$\tgt$ to the study of $\tgb$, providing thus the necessary tools to solve Namikawa's problem.

\end{nota}

\begin{nota}{\emph{Outline of the paper}}

In section 2, we collect all the preliminaries that we will
need in the sequel. We first define the category of stacky fans.
Then, for the reader's convenience, we review the concepts of
graph theory and (unoriented) matroid theory, that will play a major
role throughout the paper.

In section 3, we define tropical curves and construct
the moduli space $\Mgt$ of tropical curves of genus $g$.

In section 4, we first define tropical (principally polarized)
abelian varieties and then we
construct the moduli space $\Agt$ of tropical abelian
varieties. We show how to endow $\Agt$ with the structure of a stacky fan $\Agcom$ for every
$\GL_g(\Z)$-admissible decomposition $\Sigma$ of
the cone $\Ort$.
Then, we focus our attention on the (second) Voronoi
$\GL_g(\Z)$-admissible decomposition of $\Ort$ and the resulting
stacky fan structure on $\Agt$, which
we denote by $\AgV$. We define a stacky subfan
$\Agzon\subset \AgV$
whose cells correspond to $\GL_g(\Z)$-equivalence classes of
zonotopal Dirichlet-Voronoi polytopes,
and we show that these cells are in bijection with simple
matroids of rank at most $g$.

In section 5, we define the tropical Torelli map
$\tgt:\Mgt \to \AgV$. We prove a Schottky-type theorem and a
Torelli-type theorem.

In section 6, we study the restriction of the tropical Torelli
map $\tgt$ to the stacky subfan $\Mgpl\subset \Mgt$
of planar tropical curves and we give a positive answer
to Namikawa's question.

In section 7, we list some of the possible further developments of our work. We
hope to come back to some of them in a near future.

\end{nota}

\emph{Acknowledgements.}

This project started while the three authors were visiting
the MSRI, in Berkeley, for the special semester on Algebraic Geometry
in 2009: we thank the organizers of the program as well as the Institution
for the stimulating atmosphere.
We thank L. Caporaso, G. Mikhalkin, B. Sturmfels, O. Tommasi, I. Yudin for useful
conversations. We thank the referee for useful remarks.

\section{Preliminaries}

\subsection{Stacky fans}
\label{manifold}

In order to fix notations, recall some concepts from convex geometry.
A \emph{polyhedral cone} $\Xi$ is the
intersection of finitely many closed linear half-spaces in $\R^n$.
The \emph{dimension} of $\Xi$ is the dimension of the smallest linear subspace
containing $\Xi$. Its \emph{relative interior} $\Int\Xi$ is the
interior inside this linear subspace, and the complement $\Xi\setminus
\Int\Xi$ is called the \emph{relative boundary} $\partial\,\Xi$.
If $\dim\Xi=k$ then $\partial\,\Xi$ is itself a union of polyhedral cones of
dimension at most $k-1$, called \emph{faces}, obtained by intersection
of $\Xi$ with linear hyperplanes disjoint from $\Int\Xi$. Faces of dimensions
$k-1$ and $0$
are called \emph{facets} and
\emph{vertices}, respectively.
A polyhedral cone is \emph{rational} if the linear functions defining the
half-spaces can be taken with rational coefficients.

An \emph{open polyhedral cone} of $\R^n$ is the relative
interior of a polyhedral cone.
Note that the closure
of an open polyhedral cone
with respect to the Euclidean topology of $\R^n$ is a polyhedral
cone.
An open polyhedral cone is rational if its closure is rational.

We say that a map $\R^n\to \R^m$ is \emph{integral linear}
if it is linear and sends $\Z^n$ into $\Z^m$, or equivalently
if it is linear and can be represented by an integral matrix
with respect to the canonical bases of $\R^n$ and $\R^m$.


\begin{defi}\label{pol-compl}
Let $\{X_k\subset \R^{m_k}\}_{k\in K}$ be a finite collection of rational open
polyhedral cones such that $\dim X_k=m_k$. Moreover, for each such cone $X_k\subset \R^{m_k}$,
let $G_k$ be a group and $\rho_k:G_k\to \GL_{m_k}(\Z)$ a homomorphism
such that
$\rho_k(G_k)$ stabilizes the cone $X_k$ under its natural
action on $\R^{m_k}$. Therefore $G_k$ acts on $X_k$ (resp. $\ov{X_k}$),
via the homomorphism $\rho_k$, and we denote the quotient by $X_k/G_k$
(resp. $\ov{X_k}/G_k$), endowed with the quotient topology.
A topological space $X$ is said to be a \emph{stacky (abstract) fan}
with cells $\{X_k/G_k\}_{k\in K}$ if there exist continuous maps
$\alpha_k:\ov{X_k}/G_k\to X$
satisfying the following pro\-per\-ties:
\begin{enumerate}[(i)]
\item \label{pol1} The restriction of $\alpha_k$ to $X_k/G_k$
is an homeomorphism onto its image;
\item \label{pol2} $X=\coprod_k \alpha_k(X_k/G_k)$ (set-theoretically);
\item \label{pol3} For any $j, k\in K$, the natural inclusion map
$\alpha_k(\ov{X_k}/G_k)\cap \alpha_j(\ov{X_j}/G_j)\hookrightarrow
\alpha_j(\ov{X_j}/G_j)$ is induced by an integral linear map
$L:\R^{m_k}\to \R^{m_j}$, i.e. there exists a commutative diagram
\begin{equation}
\xymatrix{
\alpha_k(\ov{X_k}/G_k)\cap \alpha_j(\ov{X_j}/G_j) \ar@{_{(}->}[rd] \ar@{^{(}->}[r]&
\alpha_k (\ov{X_k}/G_k)& \ov{X_k} \ar@{^{(}->}[r] \ar@{->>}[l]
\ar[d]^L& \R^{m_k} \ar[d]^{L} \\
&\alpha_j(\ov{X_j}/G_j)& \ov{X_j} \ar@{^{(}->}[r] \ar@{->>}[l]
& \R^{m_j} .}
\end{equation}

\end{enumerate}
By abuse of notation, we usually identify $X_k/G_k$ with its image inside
$X$ so that we usually write $X=\coprod X_k/G_k$ to denote the decomposition
of $X$ with respect to its cells $X_k/G_k$.

A stacky \emph{subfan} of $X$ is a closed subspace $X'\subseteq X$
that is a disjoint union of cells of $X$. Note that $X'$
inherits a natural structure of stacky fan
with respect to the sub-collection
$\{X_{k}/G_k\}_{k\in K'}$
of cells that are contained in $X'$.


The dimension of $X$, denoted by $\dim X$, is the greatest dimension of its cells.
We say that a cell is maximal if it is not contained in the closure
of any other cell. $X$ is said to be of \emph{pure dimension} if all its maximal
cells have dimension equal to $\dim X$. A \emph{generic point} of $X$
is a point contained in a cell of maximal dimension.

Assume now that $X$ is a stacky fan of pure dimension $n$.
The cells of dimension $n-1$ are called codimension one cells.
$X$ is said to be \emph{connected through codimension one} if for any two
maximal cells $X_k/G_k$ and $X_{k'}/G_{k'}$ one can find a sequence
of maximal cells $X_{k_0}/G_{k_0}=X_k/G_k$, $X_{k_1}/G_{k_1},\cdots,
X_{k_r}/G_{k_r}=X_{k'}/G_{k'}$ such that for any $0\leq i\leq r-1$
the two consecutive maximal cells $X_{k_i}/G_{k_i}$ and $X_{k_{i+1}}/G_{k_{i+1}}$
have a common codimension one cell in their closure.

\end{defi}

\begin{defi}\label{maps}
Let $X$ and $Y$ be two stacky fans with cells $\{X_k/G_k\}_{k\in K}$ and $\{Y_j/H_j\}_{j\in J}$ where
$\{X_k\subset \R^{m_k}\}_{k\in K}$ and $\{Y_j\subset \R^{m'_j}\}_{j\in J}$,
respectively.
A continuous map $\pi:X\to Y$ is said to be a \emph{map of stacky fans}
if for every cell $X_k/G_k$ of $X$ there exists a cell $Y_j/H_j$ of $Y$
such that
\begin{enumerate}
\item\label{map1} $\pi(X_k/G_k)\subset Y_j/H_j$;
\item\label{map2} $\pi: X_k/G_k \to Y_j/H_j$ is induced by an integral linear function
$L_{k,j}:\R^{m_k}\to \R^{m'_j}$, i.e. there exists a commutative diagram
\begin{equation}
\xymatrix{
X_k/G_k \ar[d]^{\pi}& X_k \ar@{^{(}->}[r] \ar@{->>}[l]
\ar[d]^{L_{k,j}}& \R^{m_k} \ar[d]^{L_{k,j}} \\
Y_j/H_j& Y_j \ar@{^{(}->}[r] \ar@{->>}[l]
& \R^{m_j'} .}
\end{equation}
\end{enumerate}
We say that $\pi: X\to Y$ is \emph{full} if it sends every cell $X_k/G_k$ of $X$
surjectively into some cell $Y_j/H_j$ of $Y$.
We say that $\pi:X\to Y$ is \emph{of degree one} if for every generic point
$Q\in Y_j/H_j\subset Y$ the inverse image $\pi^{-1}(Q)$ consists of a single point
$P\in X_k/G_k\subset X$ and the integral linear function $L_{k,j}$ inducing
$\pi: X_k/G_k\to Y_j/H_j$ is primitive (i.e. $L_{k,j}^{-1}(\Z^{m'_j})\subset \Z^{m_k}$).
\end{defi}

\begin{remark}\label{old-def}
The above definition of stacky fan is inspired by
some definitions of polyhedral complexes present
in the literature, most notably in \cite[Def. 5, 6]{KKMS},
\cite[Def. 2.12]{GM}, \cite[Def. 5.1]{AR} and \cite[Pag. 9]{GS}.

The notions of pure-dimension and connectedness through codimension one are
well-known in tropical geometry (see the Structure Theorem in \cite{McLS}).
\end{remark}



\subsection{Graphs}
\label{graphs}

Here we recall the basic notions of graph theory that we will need
in the sequel. We follow mostly the terminology and notations
of \cite{Die}.

Throughout this paper,
$\Gamma$  will be a finite connected graph. By finite we mean that $\Gamma$ has a finite number of vertices and edges; moreover loops or multiple edges are allowed. We denote by $V(\Gamma)$ the set of
vertices of $\Gamma$ and by $E(\Gamma)$ the set of edges of $\Gamma$. The \textit{valence} of a vertex $v$, $\val(v)$, is defined as the number of edges incident to $v$, with the usual convention that a loop around a vertex $v$ is counted twice
in the valence of $v$. A graph $\Gamma$ is $k$\textit{-regular} if $\val(v)=k$ for every $v\in V(\Gamma)$.
\begin{defi}\label{cycles&bonds}
A \textit{cycle} of $\Gamma$ is a subset $S\subseteq E(\Gamma)$ such that the graph $\Gamma/$ $(E(\Gamma)\setminus S)$,
obtained from $\Gamma$ by contracting all the edges not in $S$, is (connected and) $2$-regular.\\
If $\{V_1,V_2\}$ is a partition of $V(\Gamma)$, the set $E(V_1,V_2)$ of all the edges of $\Gamma$ with one end in $V_1$ and the other end in $V_2$ is called a \textit{cut}; a \textit{bond} is a minimal cut, or equivalently, a cut $E(\Gamma_1,\Gamma_2)$
such that the graphs $\Gamma_1$ and $\Gamma_2$ induced by $V_1$ and $V_2$, respectively, are connected.
\end{defi}
In the Example \ref{Exa-dual}, the subsets $\{f_1,f_2,f_3\}$ and $\{f_4,f_5\}$ are bonds of $\Gamma_2$ while
the subset $\{f_1,f_2,f_3,f_4,f_5\}$ is a non-minimal cut of $\Gamma_2$.

\begin{nota}{\emph{Homology theory}}\label{homo}

Consider the space of $1$-chains and $0$-chains of $\Gamma$ with values in a
finite abelian group $A$ (we will use the groups $A=\Z,\R$):
$$
  C_1(\Gamma, A):=\oplus_{e\in E(\Gamma)}A \cdot e \hspace{1,5cm}
C_0(\Gamma, A):=\oplus_{v\in V(\Gamma)} A\cdot v.
$$
We endow the above spaces with the $A$-bilinear, symmetric, non-degenerate forms
uniquely determined by:
$$
(e,e'):=\delta_{e,e'} \hspace{1,5cm} \langle v,v'\rangle :=\delta_{v,v'},
$$
where $\delta_{-,-}$ is the usual Kronecker symbol and $e, e'\in E(\Gamma)$;
$v, v'\in V(\Gamma)$. Given a subspace $V\subset C_1(\Gamma,A)$, we denote by $V^{\perp}$
the orthogonal subspace with respect to the form $(,)$.

Fix now an orientation of $\Gamma$ and let $s, t: E(\Gamma)\to V(\Gamma)$ be the two maps sending an
oriented edge to its source and target vertex, respectively. Define two boundaries maps
$$\begin{aligned}
\partial : C_1(\Gamma, A) & \longrightarrow C_0(\Gamma, A) & \hspace{1,5cm}
\delta : C_0(\Gamma, A) & \longrightarrow C_1(\Gamma, A) \\
e & \mapsto t(e)-s(e) & v & \mapsto \sum_{e\,:\, t(e)=v} e -\sum_{e\, :\, s(e)=v} e.
\end{aligned}$$
It is easy to check that the above two maps are adjoint with respects to
the two symmetric $A$-bilinear forms $(,)$ and $\langle,\rangle$, i.e.
$\langle \partial(e),v\rangle =(e, \delta(v))$ for any $e\in E(\Gamma)$ and $v\in V(\Gamma)$.

The kernel of $\partial$ is called the first homology group of $\Gamma$
with coefficients in $A$ and is denoted by $H_1(\Gamma, A)$. Since $\partial$ and $\delta$ are adjoint, it follows that
$H_1(\Gamma,A)^{\perp}=\Im(\delta).$
It is a well-known result in graph theory that $H_1(\Gamma,A)$ and $H_1(\Gamma,A)^{\perp}$ are free $A$-modules of ranks:
$$\begin{sis}
&\rm{rank}_{A} H_1(\Gamma, A)=1-\#V(\Gamma) +\#E(\Gamma), \\
&\rm{rank}_{A} H_1(\Gamma, A)^{\perp}=\#V(\Gamma)-1. \\
\end{sis}$$
The $A$-rank of $H_1(\Gamma,A)$ is called also the \emph{genus}
of $\Gamma$ and it is denoted by $g(\Gamma)$; the $A$-rank of
$H_1(\Gamma, A)^{\perp}$
is called the \emph{co-genus} of $\Gamma$ and it is denoted by $g^*(\Gamma)$.

\end{nota}

\begin{nota}{\emph{Connectivity and Girth}}\label{conn-girth}

There are two ways to measure the connectivity of a graph: the
vertex-connecti\-vi\-ty (or connectivity) and the edge-connectivity.
Recall their definitions (following \cite[Chap. 3]{Die}).

\begin{defi}\label{conn}
Let $k\geq 1$ be an integer.
\begin{enumerate}
\item A graph $\Gamma$
is said to be $k$-{\it vertex-connected} (or simply $k$-{\it connected}) if the
graph obtained from $\Gamma$ by removing any set of $s\leq k-1$ vertices and the edges adjacent to them
is connected.

\item The \textit{connectivity} of $\Gamma$, denoted by $k(\Gamma)$, is the maximum integer $k$
such that $\Gamma$ is $k$-connected. We set
$k(\Gamma)=+\infty$ if $\Gamma$ has only one vertex.

\item A graph $\Gamma$
is said to be $k$-{\it edge-connected}  if the graph obtained from $\Gamma$ by removing
any set of $s\leq k-1$ edges is connected.

\item The \textit{edge-connectivity} of $\Gamma$, denoted by $\lambda(\Gamma)$, is the
maximum integer $k$ such that $\Gamma$ is $k$-edge-connected. We set
$\lambda(\Gamma)=+\infty$ if $\Gamma$ has only one vertex.
\end{enumerate}
\end{defi}

Note that $\lambda(\Gamma)\geq 2$ if and only if $\Gamma$ has
no separating edges; while $\lambda(\Gamma)\geq 3$ if and only if
$\Gamma$ does not have pairs of separating edges.

In \cite{CV1}, a characterization of $3$-edge-connected
graphs is given in terms of the so-called {\it $C1$-sets}. Recall (see \cite[Def.
2.3.1, Lemma 2.3.2]{CV1}) that a $C1$-set of $\Gamma$ is a subset of
$E(\Gamma)$ formed by edges that are non-separating and belong to the same
cycles of $\Gamma$. The $C1$-sets form a
partition of the set of non-separating edges (\cite[Lemma 2.3.4]{CV1}). In \cite[Cor. 2.3.4]{CV1}, it is proved that $\Gamma$ is $3$-edge-connected if and
only if $\Gamma$ does not have separating edges and all the $C1$-sets have
cardinality one.

The two notions of connectivity are related by the following relation:
$$k(\Gamma)\leq \lambda(\Gamma)\leq \delta(\Gamma),$$
where $\delta(\Gamma):=\min_{v\in V(\Gamma)}\{\val(v)\}$ is the valence of $\Gamma$.


Finally recall the definition of the girth of a graph.

\begin{defi}\label{girth}
The \emph{girth} of a graph $\Gamma$, denoted by $\girth(\Gamma)$, is the minimum
integer $k$ such that $\Gamma$ contains a cycle of length $k$.
We set $\girth(\Gamma)=+\infty$ if $\Gamma$ has no cycles, i.e. if it is a tree.
\end{defi}

Note that $\girth(\Gamma)\geq 2$ if and only if $\Gamma$ has no loops;
while $\girth(\Gamma)\geq 3$ if and only if $\Gamma$ has no loops and no multiples
edges. Graphs with girth greater or equal than $3$ are called simple.

\begin{example}
For the graph $\Gamma_1$ in the Example \ref{Exa-dual}, we have that 
$k(\Gamma_1)=1$ because $v$ is a separating vertex. The $C1$-sets of $\Gamma_1$ are 
$\{e_1,e_2,e_3\}$ and $\{e_4,e_5\}$. We have that $\lambda(\Gamma_1)=2$ because
$\Gamma_1$ has a $C1$-set of cardinality greater than $1$ and does not have separating edges.
Moreover, $\girth(\Gamma_1)=2$ since $\{e_4,e_5\}$ is the smallest cycle of 
$\Gamma_1$.

The Peterson graph $\Gamma$ depicted in Figure \ref{peterson} is $3$-regular and has  
$k(\Gamma)=\lambda(\Gamma)=3$. Moreover, it is easy to check that 
$\girth(\Gamma)=5$.
\end{example}

\end{nota}

\begin{nota}{\emph{$2$-isomorphism}}

We introduce here an equivalence relation on the set of all graphs, that will
be very useful in the sequel.

\begin{defi}[\cite{Whi}]\label{2-iso}
Two graphs $\Gamma_1$ and $\Gamma_2$ are said to be \emph{$2$-isomorphic},
and we write $\Gamma_1\equiv_2 \Gamma_2$,
if there exists a bijection $\phi:E(\Gamma_1)\to E(\Gamma_2)$
inducing a bijection between cycles of $\Gamma_1$ and cycles of $\Gamma_2$, or equivalently, between bonds of $\Gamma_1$ and bonds of $\Gamma_2$.
We denote by $[\Gamma]_{\rm 2}$ the $2$-isomorphism class
of a graph $\Gamma$.
\end{defi}
This equivalence relation is called cyclic equivalence in
\cite{CV1} and denoted by $\equiv_{\rm cyc}$.

\begin{remark}
The girth, the connectivity, the edge-connectivity, the genus and the co-genus are
defined up to $2$-isomorphism; we denote them by $\girth([\Gamma]_2)$, $k([\Gamma]_2)$, $\lambda([\Gamma]_2)$, $g([\Gamma]_2)$ and $g^*([\Gamma]_2)$.
\end{remark}

As a consequence of a very well known theorem of Whitney (see \cite{Whi} or \cite[Sec. 5.3]{Oxl}),
we have the following

\begin{fact}
\label{3v} If $\Gamma$ is 3-connected,
the $2$-isomorphism class $[\Gamma]_2$ contains only $\Gamma$.

\end{fact}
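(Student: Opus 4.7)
The plan is to deduce this as a direct corollary of Whitney's classical characterization of $2$-isomorphic graphs, which we would invoke as the main input. Whitney's theorem (in its constructive form, see \cite{Whi} or \cite[Sec.~5.3]{Oxl}) states that two connected graphs $\Gamma_1$ and $\Gamma_2$ are $2$-isomorphic if and only if one can be transformed into the other by a finite sequence of the following three operations: (a) vertex identification of two vertices lying in different connected components; (b) vertex cleaving, i.e. the inverse of (a), which splits a vertex $v$ whose removal disconnects $\Gamma$; and (c) the Whitney twist, which is defined at a $2$-vertex-cut $\{u,v\}$ and reverses the identification of $u$ and $v$ in one of the two pieces of $\Gamma$ obtained by separating at $\{u,v\}$.

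Granted this, the argument is essentially a matter of observing that none of the three operations can be performed non-trivially on a $3$-connected graph. First, since $\Gamma$ is assumed connected, operation (a) requires first splitting $\Gamma$ into more than one component, which forces the preceding step to be a cleaving at a cut vertex; hence both (a) and (b) are available only when $\Gamma$ has a cut vertex, i.e. when $k(\Gamma)\leq 1$. Second, operation (c) requires a $2$-vertex-cut, which exists in $\Gamma$ only if $k(\Gamma)\leq 2$. By hypothesis $k(\Gamma)\geq 3$, so none of the three operations produces a graph different from $\Gamma$ itself (up to isomorphism).

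Putting this together: any graph $\Gamma'\equiv_2 \Gamma$ is obtained from $\Gamma$ by a sequence of Whitney operations. Each such operation either produces a graph isomorphic to $\Gamma$ (the trivial case) or requires $k(\Gamma)\leq 2$. Since the first Whitney operation applied to $\Gamma$ must be of the non-trivial type in order to change the isomorphism class, we get a contradiction with $k(\Gamma)\geq 3$ unless the sequence is empty (modulo isomorphism). Therefore $\Gamma'\cong \Gamma$, which proves that $[\Gamma]_2$ contains only $\Gamma$.

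The only genuinely non-trivial ingredient here is Whitney's theorem itself, whose proof is classical and which we would simply quote. The forward-looking subtlety is checking the ``first step'' of the Whitney sequence, because in principle one could imagine performing a twist at an intermediate graph whose connectivity is lower than $\Gamma$'s; to handle this cleanly, one would argue by induction on the length of the Whitney sequence, noting that $k(\Gamma)\geq 3$ already forbids the very first operation in the sequence from being non-trivial, so the sequence must be empty up to isomorphism. This is really the only point that requires slightly more than a one-line verification.
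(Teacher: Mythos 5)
Your argument is correct and follows essentially the same route as the paper, which offers no independent proof but simply records this as a consequence of Whitney's classical theorem (citing \cite{Whi} and \cite[Sec.~5.3]{Oxl}); you merely make explicit the standard deduction from the operations form of Whitney's $2$-isomorphism theorem, namely that $3$-connectivity rules out any non-trivial identification, cleaving, or twist at every stage of the sequence. Nothing further is needed.
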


In the sequel, graphs with girth or edge-connectivity at least $3$
will play an important role. We describe here a way to obtain such
a graph starting with an arbitrary graph $\Gamma$.

\begin{defi}\label{simpl}
Given a graph $\Gamma$, the {\it simplification} of $\Gamma$ is the
simple graph $\Gamma^{{\rm sim}}$ obtained from $\Gamma$ by deleting all
the loops and all but one among each collection of multiple edges.
\end{defi}
Note that the graph $\Gamma^{{\rm sim}}$ does not depend on the choices
made in the operation of deletion.
A similar operation can be performed with respect to the edge-connectivity,
but the result is only a $2$-isomorphism class of graphs.

\begin{defi}\cite[Def. 2.3.6]{CV1}\label{3-conn}
Given a graph $\Gamma$, a {\it 3-edge-connectivization} of $\Gamma$ is a
graph, denoted by $\Gamma^{3}$,
obtained from $\Gamma$ by contracting all the separating edges
and all but one among the edges of each $C1$-set of $\Gamma$.

The $2$-isomorphism class of $\Gamma^3$, which is independent of all the
choices made in the construction of $\Gamma^3$
(see \cite[Lemma 2.3.8(iii)]{CV1}), is called the $3$-edge-connectivization
class of $\Gamma$ and is denoted by $[\Gamma^3]_2$.
\end{defi}

\end{nota}

\begin{nota}{\emph{Duality}}

Recall the following definition (see \cite[Sec. 4.6]{Die}).

\begin{defi}\label{gr-dual}
Two graphs $\Gamma_1$ and $\Gamma_2$ are said to be in {\it abstract duality}
if there exists a bijection $\phi:E(\Gamma_1)\to E(\Gamma_2)$
inducing a bijection between cycles (resp. bonds) of $\Gamma_1$ and bonds (resp. cycles) of $\Gamma_2$.
Given a graph $\Gamma$, a graph $\Gamma'$ such that
$\Gamma$ and $\Gamma'$ are in abstract duality is called an abstract dual
of $\Gamma$ and is  denoted by $\Gamma^*$.
\end{defi}

\begin{example}\label{Exa-dual}
Let us consider the graphs
$$
\xymatrix@C=1pc@R=0.8pc{
& *{\bullet}\ar@{-}[dd]_{e_2} \ar@{-}[drr]^{e_1}& & & &\\
\Gamma_1 = & & &*{\bullet} \ar@/^/@{-}[rr]^{e_4}^<{v} \ar@/_/@{-}[rr]_{e_5}& &*{\bullet} \\
& *{\bullet} \ar@{-}[rru]_{e_3}& & & &
} 
\hspace{2cm}  
\xymatrix@C=1pc@R=0.8pc{
&&&& \\
\Gamma_2= & *{\bullet} \ar@/^/@{-}[rr]^{f_1} \ar@{-}[rr]|{f_2} \ar@/_/@{-}[rr]_{f_3}& & *{\bullet} 
\ar@/^/@{-}[rr]^{f_4}^<{w} \ar@/_/@{-}[rr]_{f_5}& & *{\bullet}  \\
}
$$


\noindent  The cycles of $\Gamma_1$ are $C_1:=\{e_1,e_2,e_3\}$ and $C_2:=\{e_4, e_5\}$, while  
the bonds of $\Gamma_2$ are $B_1:=\{f_1,f_2,f_3\}$ and $B_2:=\{f_4,f_5\}$.
The bijection $\phi:E(\Gamma_1)\to E(\Gamma_2)$ sending $e_i$ to $f_i$ for $i=1,\ldots,5$
sends the cycles of $\Gamma_1$ into the bonds of $\Gamma_2$; therefore $\Gamma_1$ and $\Gamma_2$ are in abstract 
duality.
\end{example}

Not every graph admits an abstract dual. Indeed we have the following
theorem of Whitney (see \cite[Theo. 4.6.3]{Die}).

\begin{thm}\label{exi-dual}
A graph $\Gamma$ has an abstract dual if and only if
$\Gamma$ is planar, i.e. if it
can be embedded into the plane.
\end{thm}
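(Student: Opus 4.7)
The plan is to treat the two implications separately.

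For the direction planar $\Rightarrow$ admits an abstract dual, I would use the classical geometric construction. Fix an embedding of $\Gamma$ into $S^2$, place one vertex of $\Gamma^*$ in the interior of each face of the embedding, and for each edge $e$ of $\Gamma$ separating two faces $f_1, f_2$, draw an edge $e^*=\phi(e)$ of $\Gamma^*$ joining the vertex in $f_1$ to that in $f_2$, meeting $e$ transversally. The content is then to verify that $\phi : E(\Gamma)\to E(\Gamma^*)$ sends cycles of $\Gamma$ bijectively to bonds of $\Gamma^*$. Given a cycle $C\subseteq E(\Gamma)$, realize it as a simple closed curve in $S^2$; by the Jordan curve theorem, $S^2\setminus C$ splits into two open regions, which induce a partition $V(\Gamma^*)=V_1\sqcup V_2$. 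Path-connectedness of each region forces the subgraphs of $\Gamma^*$ induced by $V_1$ and $V_2$ to be connected, so the edges crossing $C$ --- namely $\phi(C)$ --- form a bond. A symmetric Jordan-curve argument shows every bond of $\Gamma^*$ arises this way, giving the desired bijection between cycles and bonds.

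For the reverse direction, the plan is to invoke Kuratowski's theorem: $\Gamma$ is planar iff it contains no subdivision of $K_5$ or $K_{3,3}$. The first step is to show that if $\phi$ realizes an abstract duality $\Gamma\leftrightarrow\Gamma^*$ and $e\in E(\Gamma)$, then $\phi$ restricts to abstract dualities $\Gamma\setminus e\leftrightarrow\Gamma^*/\phi(e)$ and $\Gamma/e\leftrightarrow\Gamma^*\setminus\phi(e)$; this is a matter of unwinding how cycles and bonds transform under deletion and contraction (contracting kills all bonds containing the edge, deleting kills all cycles containing it, with the roles swapped after applying $\phi$). Consequently, admitting an abstract dual is preserved under taking minors, and in particular under suppression of degree-two vertices that appear in subdivisions. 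The second step is to verify directly that neither $K_5$ nor $K_{3,3}$ admits an abstract dual. For each candidate the constraints $g(\Gamma^*)=g^*(\Gamma)$, $g^*(\Gamma^*)=g(\Gamma)$ and $|E(\Gamma^*)|=|E(\Gamma)|$ pin down the vertex and edge counts of any hypothetical dual, after which one rules out all combinatorial possibilities for a graph realizing the prescribed cycle/bond correspondence.

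Combining both steps, if $\Gamma$ is non-planar then Kuratowski produces a subdivision of $K_5$ or $K_{3,3}$ inside $\Gamma$; by the minor-stability of admitting a dual, together with suppression of degree-two vertices (which trivially preserves cycles and bonds), one would obtain an abstract dual of $K_5$ or $K_{3,3}$, contradicting the second step. The main obstacle is precisely that second step: the direct proof that $K_5$ and $K_{3,3}$ have no abstract duals. The conceptually cleanest route is via matroid theory, since the cycle matroids of $K_5$ and $K_{3,3}$ are exactly the forbidden minors for a graphic matroid to be cographic, and admitting an abstract dual is equivalent to the cycle matroid being cographic; but giving a self-contained graph-theoretic proof requires a careful finite case analysis on the possible dual graphs, which is the delicate combinatorial core of Whitney's theorem.
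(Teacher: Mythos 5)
The paper itself gives no proof of Theorem \ref{exi-dual}: this is Whitney's classical theorem and is simply quoted from Diestel, so there is nothing internal to compare your argument with line by line. Your outline follows one of the two standard routes (plane dual plus a Jordan-curve argument for sufficiency; Kuratowski, minor-stability of possessing an abstract dual, and exclusion of $K_5$ and $K_{3,3}$ for necessity). For what it is worth, the source the paper cites handles the hard direction differently: an abstract duality identifies the cycle space of $\Gamma$ with the cut space of $\Gamma^*$, which has a simple basis (the vertex stars), so MacLane's planarity criterion applies and no Kuratowski-type analysis is needed at all.

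The genuine gap in your proposal is the step you yourself single out: you never prove that $K_5$ and $K_{3,3}$ admit no abstract dual, and ``one rules out all combinatorial possibilities'' is a placeholder, not an argument; moreover the alternative you float --- that $M(K_5)$ and $M(K_{3,3})$ are the excluded minors for cographicness --- is Tutte's excluded-minor theorem, a result much deeper than the statement being proved, so it cannot serve as the fallback. The missing step can, however, be closed in a few lines using relations of the type (\ref{genus-dual}), whose verification uses only the cycle--bond bijection and hence is valid for any abstract duality: by Proposition \ref{2dual} the bijection gives $M(\Gamma)=M^*(\Gamma^*)$, so comparing ranks yields $g^*(\Gamma)=g(\Gamma^*)$ and $g(\Gamma)=g^*(\Gamma^*)$, and it carries a shortest cycle to a smallest bond, so $\girth(\Gamma)=\lambda(\Gamma^*)\leq \delta(\Gamma^*)$. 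A dual of $K_5$ would thus have $10$ edges, $g^*=g(K_5)=6$, hence $7$ vertices, and minimum valence at least $3$, forcing $20=2\cdot 10\geq 3\cdot 7=21$, absurd; a dual of $K_{3,3}$ would have $9$ edges, $5$ vertices and minimum valence at least $4$, forcing $18\geq 20$, absurd. The rest of your plan is fine: suppression of a degree-two vertex is contraction of one of its two edges, hence is subsumed by your deletion/contraction lemma, which in turn is cleanest to verify in the matroid language the paper already sets up, via $M(\Gamma\setminus I)=M(\Gamma)\setminus I$ and $M^*(\Gamma/I)=M^*(\Gamma)\setminus I$ (formulas (\ref{gr-contr}) and (\ref{cogr-contr})) together with Proposition \ref{2dual}; just take a little care with loops, bridges and connectivity conventions when passing to subgraphs.
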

It is easy to give examples of planar graphs $\Gamma$ admitting
non-isomorphic abstract duals (see \cite[Example 2.3.6]{Oxl}).
However it follows easily from the definition that two abstract
duals of the same graph are $2$-isomorphic. Therefore, using
the above Theorem \ref{exi-dual}, it follows that abstract duality induces a
bijection
\begin{equation}\label{dual-iso}
\begin{aligned}
\left\{\text{Planar graphs}\right\}_{/\equiv_{2}} &\longleftrightarrow
\left\{\text{Planar graphs}\right\}_{/\equiv_{2}}\\
[\Gamma]_{2} & \longmapsto [\Gamma]_2^*:=[\Gamma^*]_{2}.
\end{aligned}
\end{equation}
Moreover, it is easy to check that the duality satisfies:
\begin{equation}\label{genus-dual}
\girth([\Gamma]_{2})=\lambda([\Gamma]_{2}^*) \hspace{1,2cm} g^*([\Gamma]_2)=g([\Gamma]_2^*) \hspace{1,2cm}
k([\Gamma]_2)=k([\Gamma]_2^*).
\end{equation}

\end{nota}

\subsection{Matroids}

Here we recall the basic notions of (unoriented) matroid theory that we will
need in the sequel. We follow mostly the terminology and notations
of \cite{Oxl}.

\begin{nota}{\emph{Basic definitions}}

There are several ways of defining a matroid (see \cite[Chap. 1]{Oxl}).
We will use the definition in terms of bases (see \cite[Sect. 1.2]{Oxl}).

\begin{defi}\label{matroid}
A {\it matroid} $M$ is a pair $(E(M), \BB(M))$ where $E(M)$ is a finite set, called the {\it ground set}, and $\BB(M)$ is a collection of subsets of $E(M)$, called {\it bases}
of $M$, satisfying the following two conditions:
\begin{enumerate}[(i)]
\item $\BB(M)\neq \emptyset$;
\item If $B_1,B_2\in \BB(M)$ and $x\in B_1\setminus B_2$, then there exists
an element $y\in B_2\setminus B_1$ such that $(B_1\setminus \{x\})\cup \{y\}
\in \BB(M)$.
\end{enumerate}

Given a matroid $M=(E(M), \BB(M))$, we define:
\begin{enumerate}[(a)]
\item The set of independent elements
$$\II(M):=\{I\subset E(M)\,:\, I\subset B \text{ for some } B\in \BB(M)\};$$
\item The set of dependent elements
$$\DD(M):=\{D\subset E(M)\,:\, E(M)\setminus D \in \II(M)\};$$
\item The set of circuits
$$\CC(M):=\{C\in \DD(M)\,:\, C \text{ is minimal among the elements of } \DD(M)\}.$$
\end{enumerate}
\end{defi}

It can be derived from the above axioms, that all the bases of $M$ have the
same cardinality, which is called the {\it rank} of $M$ and is denoted by $r(M)$.

Observe that each of the above sets $\BB(M)$, $\II(M)$, $\DD(M)$, $\CC(M)$
determines all the others. Indeed, it is possible to define a matroid $M$ in terms of the ground set $E(M)$ and each of the above sets, subject to suitable
axioms (see \cite[Sec. 1.1, 1.2]{Oxl}).

The above terminology comes from the following basic example of
matroids.

\begin{example}\label{rep-mat}
Let $F$ be a field and $A$ an $r\times n$ matrix of rank $r$ over $F$.
Consider the columns of $A$ as elements of the vector space $F^r$,
and call them $\{v_1, \ldots, v_n\}$.
The {\it vector matroid} of $A$, denoted by $M[A]$, is the matroid
whose ground set is $E(M[A]):=\{v_1,\ldots,v_n\}$ and whose bases are the
subsets of $E(M[A])$ consisting of vectors that form a base of
$F^r$. It follows easily that $\II(M[A])$ is formed by the subsets
of independent vectors of $E(M[A])$; $\DD(M[A])$
is formed by the subsets of dependent vectors and $\CC(M[A])$ is formed
by the minimal subsets of dependent vectors.
\end{example}

We now introduce a very important class of matroids.

\begin{defi}
A matroid $M$ is said to be {\it representable} over a field $F$, or
simply $F$-representable, if it is isomorphic to the vector matroid
of a matrix $A$ with coefficients in $F$. A matroid $M$ is said to be
{\it regular} if it is representable over any field $F$.
\end{defi}

Regular matroids are closely related to {\it totally unimodular matrices},
i.e. to real matrices for which every square submatrix has determinant
equal to $-1$, $0$ or $1$.
We say that two totally unimodular matrices $A, B\in M_{g,n}(\R)$
are {\it equivalent} if $A=XBY$ where $X\in {\rm GL}_g(\Z)$ and $Y\in {\rm GL}_n(\Z)$
is a permutation matrix.

\begin{thm}\label{reg-thm}
\begin{enumerate}[(i)]
\item A matroid $M$ of rank $r$ is regular if and only if $M=M[A]$ for a totally
unimodular matrix $A\in M_{g,n}(\R)$ of rank $r$, where $n=\#E(M)$
and $g$ is a
natural number such that $g\geq r$.
\item Given two totally unimodular matrices $A, B\in M_{g, n}(\R)$ of rank $r$,
we have that $M[A]=M[B]$ if and only if $A$ and $B$ are equivalent.
\end{enumerate}
\end{thm}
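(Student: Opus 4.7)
The plan is to recognize both parts as classical theorems of Tutte in matroid theory, and to outline which portions admit short direct arguments and which ones must be imported as black boxes from the standard reference \cite{Oxl}.

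For the ``if'' direction of (i), I would argue directly. Assume $A \in M_{g,n}(\R)$ is totally unimodular of rank $r$, and let $F$ be an arbitrary field. I form the matrix $A_F \in M_{g,n}(F)$ by interpreting each entry of $A$ (which lies in $\{-1,0,1\}$) as the corresponding element $0, 1_F, -1_F$ of $F$. A subset $I$ of $r$ columns of $A$ forms a basis of the column space precisely when the determinant of the corresponding $r\times r$ submatrix of some choice of $r$ linearly independent rows is nonzero; but by total unimodularity this determinant is in $\{-1,0,1\}$, and this property is preserved under reduction to any field $F$. Hence the collections of bases of $M[A]$ and $M[A_F]$ coincide, so $M[A] = M[A_F]$ is representable over $F$, and therefore $M = M[A]$ is regular.

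For the ``only if'' direction of (i), the natural path is to invoke Tutte's regular representation theorem: every regular matroid admits a totally unimodular representation over $\R$ (see \cite[Theorem 6.6.3]{Oxl}). Once such a representation $A_0 \in M_{r,n}(\R)$ is produced (with $r$ rows), one obtains a representation in $M_{g,n}(\R)$ for any $g \geq r$ simply by adjoining $g - r$ zero rows; total unimodularity and the vector matroid are obviously preserved by this padding. This reduces the statement to the classical Tutte theorem, which I would simply cite.

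For part (ii), the easy ``if'' direction is immediate: if $A = XBY$ with $X \in \GL_g(\Z)$ and $Y$ a permutation matrix, then the columns of $A$ are obtained from those of $B$ by a reindexing followed by an invertible linear transformation, so the linear dependence structure among columns is preserved, giving $M[A] = M[B]$. The ``only if'' direction is again a theorem of Tutte on the uniqueness of totally unimodular representations of a regular matroid up to the natural equivalence (see \cite[Theorem 10.1.3 and subsequent remarks]{Oxl}); the point is that two totally unimodular matrices with the same vector matroid have kernels that agree up to a signed permutation, and the row-to-row passage is then implemented by an element of $\GL_g(\Z)$ because unimodularity forces the base-change matrix to be integral with unit determinant.

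The main obstacle in this proof is of course Tutte's theorem itself (the converse direction of (i) together with the uniqueness statement in (ii)); both are deep combinatorial results whose proofs rest on the excluded-minor characterization of regular matroids and go well beyond what can be reproduced here. The contribution of this proof is therefore simply to assemble those two facts, together with the elementary reduction-to-$F$ argument for the easy direction of (i) and the equally elementary verification for the easy direction of (ii), into the statement as formulated.
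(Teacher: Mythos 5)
Your proposal is correct and follows essentially the same route as the paper, which likewise disposes of both parts by citing the classical results in Oxley (total unimodularity characterization of regular matroids for (i), and unique representability of regular matroids over $\R$ — using that $\R$ has no non-trivial automorphisms — for (ii)). The only difference is cosmetic: you spell out the easy directions (reduction of a totally unimodular matrix modulo an arbitrary field, zero-row padding, and invariance of column dependencies under $A=XBY$), which the paper subsumes into the same citations.
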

\begin{proof}
Part $(i)$ is proved in  \cite[Thm. 6.3.3]{Oxl}. Part $(ii)$ follows easily from
\cite[Prop. 6.3.13, Cor. 10.1.4]{Oxl}, taking into account that $\R$
does not have non-trivial automorphisms.
\end{proof}

In matroid theory, there is a natural duality theory (see
\cite[Chap. 2]{Oxl}).

\begin{defi}\label{mat-dual}
Given a matroid $M=(E(M), \BB(M))$, the {\it dual matroid} $M^*=(E(M^*), \BB(M^*))$
is defined by putting $E(M^*)=E(M)$ and
$$\BB(M^*)=\{B^*\subset E(M^*)=E(M)\,:\, E(M)\setminus B^* \in \BB(M)\}.$$
\end{defi}

It turns out that the dual of an $F$-representable matroid is again $F$-representable (see \cite[Cor. 2.2.9]{Oxl}) and therefore that
the dual of a regular matroid is again regular
(see \cite[Prop. 2.2.22]{Oxl}).

Finally, we need to recall the concept of simple matroid
(see \cite[Pag. 13, Pag. 52]{Oxl}).

\begin{defi}\label{simple-mat}
Let $M$ be a matroid. An element $e\in E(M)$ is called a {\it loop} if
$\{e\}\in \CC(M)$. Two distinct elements $f_1, f_2\in E(M)$ are called
{\it parallel} if $\{f_1, f_2\}\in \CC(M)$; a parallel class of $M$
is a maximal subset $X\subset E(M)$ with the property that
all the elements of $X$ are not loops and they are pairwise
parallel.

$M$ is called {\it simple} if it has no loops and all the parallel classes
have cardinality one.
\end{defi}

Given a matroid, there is a standard way to associate to it a simple
matroid.

\begin{defi}\label{simplifi-mat}
Let $M$ be a matroid. The {\it simple matroid associated} to $M$,
denoted by $\widetilde{M}$, is the matroid whose ground set
is obtained by deleting all the loops of $M$ and, for each
parallel class $X$ of $M$, deleting all but one distinguished
element of $X$ and whose set of bases is the natural one induced by
$M$.
\end{defi}

\end{nota}

\begin{nota}{\emph{Graphic and Cographic matroids}}

Given a graph $\Gamma$, there are two natural ways of associating a matroid to it.

\begin{defi}\label{grap&cogr-mat}
The {\it graphic matroid} (or {\it cycle matroid}) of $\Gamma$ is the matroid
$M(\Gamma)$ whose ground set is $E(\Gamma)$
and whose circuits are the cycles of $\Gamma$. The {\it cographic matroid} (or {\it bond matroid}) of $\Gamma$ is the matroid
$M^*(\Gamma)$ whose ground set is $E(\Gamma)$ and whose circuits
are the bonds of $\Gamma$.
\end{defi}

The rank of $M(\Gamma)$ is equal to $g^*(\Gamma)$ (see
\cite[Pag. 26]{Oxl}), and the rank of $M^*(\Gamma)$ is equal to $g(\Gamma)$,
as it follows easily from \cite[Formula 2.1.8]{Oxl}.

It turns out that $M(\Gamma)$ and $M^*(\Gamma)$ are regular matroids
(see \cite[Prop. 5.1.3, Prop. 2.2.22]{Oxl}) and that they are dual to each other
(see \cite[Sec. 2.3]{Oxl}).
Moreover we have the following obvious

\begin{remark}\label{2-isomo}
Two graphs $\Gamma_1$ and $\Gamma_2$ are $2$-isomorphic if and only if
$M(\Gamma_1)= M(\Gamma_2)$ or, equivalently, if and only if
$M^*(\Gamma_1)= M^*(\Gamma_2)$. Therefore, we can write
$M([\Gamma]_2)$ and $M^*([\Gamma]_2)$ for a $2$-isomorphism
class $[\Gamma]_2$.
\end{remark}

We have the following characterization of abstract dual graphs in terms
of matroid duality (see \cite[Sec. 5.2]{Oxl}).

\begin{prop}\label{2dual}
Let $\Gamma$ and $\Gamma^*$ be two graphs. The following conditions
are equivalent:
\begin{enumerate}[(i)]
\item $\Gamma$ and $\Gamma^*$ are in abstract duality;
\item $M(\Gamma)=M^*(\Gamma^*)$;
\item $M^*(\Gamma)=M(\Gamma^*)$.
\end{enumerate}
\end{prop}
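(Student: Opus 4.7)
The proof is essentially a translation exercise: each of the three conditions says the same thing, that there is a bijection $\phi:E(\Gamma)\to E(\Gamma^*)$ matching the cycles of one graph with the bonds of the other. The plan is to unpack the definitions and invoke only two facts: that the circuits of $M(\Gamma)$ are the cycles of $\Gamma$ and the circuits of $M^*(\Gamma)$ are the bonds of $\Gamma$ (Definition \ref{grap&cogr-mat}), together with the fact that matroid duality is an involution, $(M^*)^*=M$.

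First, I would prove $(i)\Leftrightarrow (ii)$. Recall that two matroids are equal precisely when, after identifying their ground sets via a suitable bijection, their circuits coincide (any one of the axiomatizations of matroids does the job, but circuits are the most convenient here). Now $(ii)$ says $M(\Gamma)=M^*(\Gamma^*)$, i.e.\ there is a bijection $\phi:E(\Gamma)=E(M(\Gamma))\to E(M^*(\Gamma^*))=E(\Gamma^*)$ under which the circuits of $M(\Gamma)$ — which are the cycles of $\Gamma$ — correspond bijectively with the circuits of $M^*(\Gamma^*)$ — which are the bonds of $\Gamma^*$. This is exactly the content of $(i)$ via Definition \ref{gr-dual}.

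Next, for $(ii)\Leftrightarrow (iii)$, I would apply matroid duality to both sides. Since $M^*(\Gamma)$ is by construction the matroid dual of $M(\Gamma)$, and similarly $M^*(\Gamma^*)$ is the dual of $M(\Gamma^*)$, and since $(N^*)^*=N$ for any matroid $N$, the equality $M(\Gamma)=M^*(\Gamma^*)$ is equivalent, after dualizing both sides, to $M(\Gamma)^{\!*}=M^*(\Gamma^*)^{\!*}$, i.e.\ $M^*(\Gamma)=M(\Gamma^*)$, which is $(iii)$. (Alternatively, one can repeat the circuit-matching argument from the previous paragraph with the roles of cycles and bonds swapped; this gives $(i)\Leftrightarrow (iii)$ directly, using the symmetric formulation of Definition \ref{gr-dual}.)

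There is really no serious obstacle here: the whole proposition is a bookkeeping statement once one knows (a) the circuit description of graphic and cographic matroids, and (b) that matroid duality exchanges the circuits of $M$ with the cocircuits of $M$. The only point that deserves a line of comment is that Definition \ref{gr-dual} is symmetric — a bijection sending cycles to bonds automatically sends bonds to cycles, because cycles and bonds are determined by each other combinatorially (they are the circuits of dual matroids) — so condition $(i)$ can equally well be read in either direction, which is what makes both $(ii)$ and $(iii)$ equivalent formulations.
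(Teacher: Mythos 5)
Your argument is correct and complete. Note that the paper does not actually prove Proposition \ref{2dual}: it is stated with a citation to \cite[Sec. 5.2]{Oxl}, so there is no in-paper proof to compare against; what you have written is precisely the standard translation argument that the reference contains. The two ingredients you use are both available in the paper (the circuit descriptions of $M(\Gamma)$ and $M^*(\Gamma)$ from Definition \ref{grap&cogr-mat}, and the fact that these two matroids are dual to each other, cited from Oxley), and you correctly flag and resolve the only delicate point, namely that a bijection matching cycles of $\Gamma$ with bonds of $\Gamma^*$ automatically matches bonds of $\Gamma$ with cycles of $\Gamma^*$ --- which follows, without circularity, by passing to the dual matroids once one knows a matroid is determined by its circuits.
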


By combining Proposition \ref{2dual} with Remark \ref{2-isomo}, we get the
following

\begin{remark}\label{gr-cogr}
There is a bijection between the following sets
$$\{\text{Graphic and cographic matroids}\}
\longleftrightarrow
\{\text{Planar graphs}\}_{/\equiv_2}.$$
Moreover this bijection is compatible with the respective duality theories,
namely the duality theory for matroids (definition \ref{mat-dual})
and the abstract duality theory for graphs (definition \ref{gr-dual}).
\end{remark}

Finally, we want to describe the simple matroid associated to a
graphic or to a cographic matroid, in terms of the simplification \ref{simpl}
and of the $3$-edge-connectiviza\-tion \ref{3-conn}.

\begin{prop}\label{simple-mat-graph}
Let $\Gamma$ be a graph. We have that
\begin{enumerate}[(i)]
\item $\widetilde{M(\Gamma)}=M(\Gamma^{{\rm sim}})$.
\item $\widetilde{M^*(\Gamma)}=M^*(\Gamma^{{\rm 3}})$, for any $3$-edge-connectivization $\Gamma^3$
of $\Gamma$.
\end{enumerate}
\end{prop}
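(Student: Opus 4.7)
The plan is to translate each step of the constructions $\Gamma^{\rm sim}$ and $\Gamma^3$ into the corresponding matroid operation, via the standard dictionary: deletion of an edge in $\Gamma$ corresponds to deletion in $M(\Gamma)$ and to contraction in $M^*(\Gamma)$; contraction in $\Gamma$ corresponds to contraction in $M(\Gamma)$ and, by matroid duality, to deletion in $M^*(\Gamma)$. Thus the task reduces to identifying loops and parallel classes of $M(\Gamma)$ and $M^*(\Gamma)$ in graph-theoretic terms.

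For (i), since the circuits of $M(\Gamma)$ are the cycles of $\Gamma$, a matroid loop $\{e\}$ is precisely a graph loop, and two edges $e_1,e_2$ are parallel (i.e.\ $\{e_1,e_2\}$ is a circuit) iff they form a pair of multiple edges between the same two vertices. Deleting loops and all but one element of each parallel class thus exactly matches the recipe of Definition~\ref{simpl} defining $\Gamma^{\rm sim}$, and the cycles of $M(\Gamma^{\rm sim})$ are in obvious bijection with the non-trivial cycles of $M(\Gamma)$ surviving after these deletions. This gives $\widetilde{M(\Gamma)}=M(\Gamma^{\rm sim})$.

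For (ii), I would first note that the circuits of $M^*(\Gamma)$ are the bonds of $\Gamma$, so a matroid loop of $M^*(\Gamma)$ is a one-element bond, i.e.\ a separating edge of $\Gamma$. The core lemma to establish is that parallel classes of $M^*(\Gamma)$ coincide with the $C1$-sets of Definition/Remark after \ref{conn}: two non-separating edges $e,f$ satisfy $\{e,f\}\in \CC(M^*(\Gamma))$ (i.e.\ $\{e,f\}$ is a bond) iff $e$ and $f$ belong to exactly the same cycles. One direction uses that if $\{e,f\}$ is a bond then $\Gamma\setminus\{e,f\}$ splits into two connected pieces $\Gamma_1,\Gamma_2$, so any cycle through $e$ crosses between $\Gamma_1$ and $\Gamma_2$ an even number of times and must use $f$ as well. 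The converse direction uses that if $e,f$ are in the same cycles and $\Gamma\setminus\{e,f\}$ were connected, a path in it between the endpoints of $e$ would give a cycle through $e$ but not $f$, contradicting the hypothesis; hence $\{e,f\}$ is a cut, and minimality is automatic from $e$ and $f$ being non-separating.

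Combining these two identifications with the dictionary recalled above, the construction of $\Gamma^3$ (contracting separating edges and all but one edge in each $C1$-set) translates on the matroid side into deleting all loops and all but one element of each parallel class of $M^*(\Gamma)$, which is exactly the definition of $\widetilde{M^*(\Gamma)}$; independence of the chosen representative $\Gamma^3$ is guaranteed by Remark~\ref{2-isomo}, since $M^*$ is a $2$-isomorphism invariant. The main obstacle is the cycle-versus-bond lemma identifying $C1$-sets with parallel classes of $M^*(\Gamma)$; everything else is a bookkeeping translation between graph operations and matroid operations via duality.
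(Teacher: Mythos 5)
Your proposal is correct and follows essentially the same route as the paper: identifying loops of $M(\Gamma)$ (resp. $M^*(\Gamma)$) with graph loops (resp. separating edges) and parallel classes with multiple-edge classes (resp. two-element bonds, i.e. $C1$-sets), then translating contraction in $\Gamma$ into deletion in $M^*(\Gamma)$. The only difference is one of detail: the paper cites Oxley for (i) and simply asserts the loop/parallel identifications for (ii) (relying on the cited characterization of $C1$-sets in \cite{CV1}), whereas you supply the short cut-parity argument proving that a pair of non-separating edges forms a bond if and only if it lies on the same cycles.
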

\begin{proof}
The first assertion is well-known (see \cite[Pag. 52]{Oxl}).

The second assertion follows from the fact that an edge $e\in E(\Gamma)$
is a loop of $M^*(\Gamma)$ if and only if $e$ is a bond of $\Gamma$,
i.e. if $e$ is a separating edge of $\Gamma$;
and that a pair $f_1,f_2$  of edges is parallel in $M^*(\Gamma)$
if and only $\{f_1, f_2\}$
is a bond of $\Gamma$, i.e. if it is a pair of separating edges of $\Gamma$.
\end{proof}

\end{nota}

\section{The moduli space $\Mgt$}

\subsection{Tropical curves}\label{ps-trop}

In order to define tropical curves, we start with the
following

\begin{defi}
\label{mark-graph}
A \emph{marked graph} is a couple $(\Gamma,w)$ consisting of a finite connected
graph $\Gamma$ and a function $w:V(\Gamma)\to \N_{\geq 0}$, called
the weight function.
A marked graph is called \emph{stable} if any vertex $v$ of weight zero
(i.e. such that $w(v)=0$) has valence $\val(v)\geq3$.
The total weight of a marked graph $(\Gamma,w)$ is
$$|w|:=\sum_{v \in V(\Gamma)} w(v),$$
and the genus of $(\Gamma,w)$ is equal to
$$g(\Gamma,w):=g(\Gamma)+|w|.$$
We will denote by $\un{0}$ the identically zero weight function.
\end{defi}



\begin{remark}\label{finiteness}
It is easy to see that there is a finite number of stable marked
graphs of a given genus $g$.
\end{remark}

\begin{defi}\label{pseudo-trop}
A \emph{tropical curve} $C$ is the datum of a triple $(\Gamma,w,l)$
consisting of a stable marked graph $(\Gamma,w)$, called the
combinatorial type of $C$, and a function $l:E(\Gamma)\to \R_{>0}$,
called the length function.
The genus of $C$ is the genus of its combinatorial type.
\end{defi}

See \ref{Ex-Peterson} for an example of a tropical curve.


\begin{remark}\label{comp-MZ}
The above definition generalizes the definition
of (equivalence class of) tropical curves given by Mikhalkin-Zharkov in \cite[Prop. 3.6]{MZ}.
More precisely, tropical curves with total weight zero in our
sense are the same as
compact tropical curves up to tropical modifications
in the sense of Mikhalkin-Zharkov.
\end{remark}

A \emph{specialization} of a tropical curve
is obtained by letting some of its edge lengths  go to $0$, i.e. by contracting
some of its edges (see \cite[Sec.3.1.D]{MIK0}).
The weight function of the specialized curve changes according to the
following rule:
if we contract a loop $e$ around a vertex $v$ then we
increase the weight of $v$ by one; if we contract an edge $e$ between
two distinct vertices $v_1$ and $v_2$ then we obtain a new vertex
with weight equal to $w(v_1)+w(v_2)$.
We write $C\rightsquigarrow C'$ to denote that $C$ specializes to $C'$;
if $(\Gamma,w)$ (resp. $(\Gamma',w')$) are the combinatorial types
of $C$ (resp. $C'$), we write as well $(\Gamma,w)\rightsquigarrow (\Gamma',w')$.
Note that a specialization preserves the genus of the tropical
curves.




\subsection{Construction of $\Mgt$}\label{constr-Mgt}

Given a marked graph $(\Gamma,w)$, its automorphism group $\Aut(\Gamma,w)$
is the subgroup of $S_{|E(\Gamma)|}\times S_{|V(\Gamma)|}$
consisting of all pairs of permutations $(\phi,\psi)$ such that
$w(\psi(v))=w(v)$ for any $v\in V(\Gamma)$
and, for a fixed orientation of $\Gamma$,
we have that $\{s(\phi(e)), t(\phi(e))\}=\{\psi(s(e)), \psi(t(e))\}$
for any $e\in E(\Gamma)$, where $s, t: E(\Gamma)\to V(\Gamma)$ are the source
and target maps corresponding to the chosen orientation. Note that this definition is independent of the orientation. There is a natural homomorphism
$$\rho_{(\Gamma,w)}: \Aut(\Gamma,w)\to S_{|E(\Gamma)|}
\subset GL_{|E(\Gamma)|}(\Z)$$
induced by the projection of
$\Aut(\Gamma,w)\subset S_{|E(\Gamma)|}\times S_{|V(\Gamma)|}$
onto the second factor followed by the inclusion of $S_{|E(\Gamma)|}$
into $GL_{|E(\Gamma)|}(\Z)$ as the subgroup of the permutation matrices.

The group $\Aut(\Gamma,w)$ acts on $\R^{|E(\Gamma)|}$ via the
homomorphism $\rho_{(\Gamma,w)}$ preserving the open
rational polyhedral cone
$\R_{>0}^{|E(\Gamma)|}$ and its closure
$\R_{\geq 0}^{|E(\Gamma)|}$. We denote the respective quotients by
$$C(\Gamma,w):=\R_{>0}^{|E(\Gamma)|}/\Aut(\Gamma,w)
\hspace{0.5cm} \text{ and } \hspace{0.5cm}
\ov{C(\Gamma,w)}:=\R_{\geq 0}^{|E(\Gamma)|}/\Aut(\Gamma,w)
$$
endowed with the quotient topology.
When $\Gamma$ is such that $E(\Gamma)=\emptyset$ and $V(\Gamma)$ is just one vertex of weight $g$, we set $C(\Gamma,w):=\{0\}$.
Note that $C(\Gamma,w)$
parametrizes tropical curves of combinatorial type equal
to $(\Gamma,w)$.


Observe that, for any specialization $i: (\Gamma,w) \rightsquigarrow
(\Gamma',w')$, we get a natural continuous map
$$\ov i: \R_{\geq 0}^{|E(\Gamma')|} \hookrightarrow \R_{\geq 0}^{|E(\Gamma)|}
\twoheadrightarrow \ov{C(\Gamma,w)},$$
where  $\ov{C(\Gamma,w)}$ is endowed with the quotient topology.
Note that, if $i$ is a nontrivial specialization, the image of the map $\ov i$ is contained in $\ov{C(\Gamma,w)}\setminus C(\Gamma,w)$,
so it does not meet the locus of $\ov{C(\Gamma,w)}$ parametrizing tropical curves of combinatorial type $(\Gamma,w)$.



We are now ready to define the moduli space of tropical curves of fixed genus.

\begin{defi}\label{Mgt}
We define $\Mgt$ as the topological space (with respect to the quotient
topology)
$$\Mgt:=\left(\coprod \ov{C(\Gamma,w)}\right)_{/ \sim}
$$
where the disjoint union (endowed with the disjoint union topology)
runs through all stable marked graphs $(\Gamma,w)$ of genus $g$
and $\sim$ is the equivalence relation generated by the following
binary relation $\approx$:
given two points $p_1\in \ov{C(\Gamma_1,w_1)}$ and $p_2\in \ov{C(\Gamma_2,w_2)}$,
$p_1\approx p_2$
iff  there exists
a stable marked graph $(\Gamma,w)$ of genus $g$, a point
$q\in \R_{\geq 0}^{|E(\Gamma)|}$  and two specializations
$i_1:(\Gamma_1,w_1)\rightsquigarrow (\Gamma,w)$
and $i_2:(\Gamma_2,w_2)\rightsquigarrow (\Gamma,w)$  such that
$\ov i_1(q)=p_1$ and $\ov i_2(q)=p_2$.
\end{defi}

From the definition of the above equivalence relation $\sim$, we get  the following

\begin{remark}\label{Mgtrem}
\noindent
\begin{itemize}
\item[(i)] Let $p_1,p_2\in \coprod \ov{C(\Gamma,w)}$ such that $p_1\sim p_2$.
If  there exist two stable marked graphs $(\Gamma_1,w_1)$ and $(\Gamma_2,w_2)$ such that $p_1\in C(\Gamma_1,w_1)$ and $p_2\in C(\Gamma_2,w_2)$,
then $(\Gamma_1,w_1)=(\Gamma_2,w_2)$ and $p_1=p_2$.
\item[(ii)] Let $p\in\coprod \ov{C(\Gamma,w)}$. Then there exists a stable marked graph $(\Gamma',w')$
and $p'\in C(\Gamma',w')$ such that $p\sim p'$.
\end{itemize}
\end{remark}






\begin{example}\label{genus2}
In the following figure we represent all stable marked graphs corresponding to tropical curves of genus $2$. The arrows represent all possible specializations.

\begin{figure}[h]
\begin{equation*}
\xymatrix@=.5pc{
 *{\bullet} \ar @{-} @/_.7pc/[rr] \ar@{-} @/^.7pc/[rr]
\ar @{-}[rr]^<{0}^>{0} && *{\bullet} & & & &&
*{\bullet} \ar@{-}@(ul,dl) \ar@{-}[rr]^<{0}^>{0} && *{\bullet}
\ar@{-}@(dr,ur) &\\
 & \ar @{-->}_{i_1}[ddd] &  &&&&& \ar @{-->}^{i_2}[dddlllll]  & \ar @{-->}[ddd] &
\\
\\
\\
&&&&&&&&&&&&\\
&*{\bullet} \ar@{-}_<{0}@(ul,dl) \ar@{-}@(dr,ur) &&& && &*{\bullet} \ar@{-}@(ul,dl) \ar@{-}[rr]^>{1}^<{0}&& *{\bullet} & \\
& \ar @{-->}[ddd] &   &&&&&   \ar @{-->}[dddlllll] & \ar @{-->}[ddd]
 &&&&& \\
\\
\\
&&&&&&&&&&&&&\\
&&*{\bullet\, 1} \ar@{-}@(ul,dl) &&&&& *{\bullet} \ar@{-}[rr]^<{1}^>{1} && *{\bullet}\\
&& \ar @{-->}[dddrrr]  &&&&&&  \ar @{-->}[dddlll]
\\
\\
&&&&&&&&&&&&&\\
&&&&& {\bullet\, 2}
}
\end{equation*}
\caption{Specializations of tropical curves of genus $2$.}\label{genus2fig}
\end{figure}
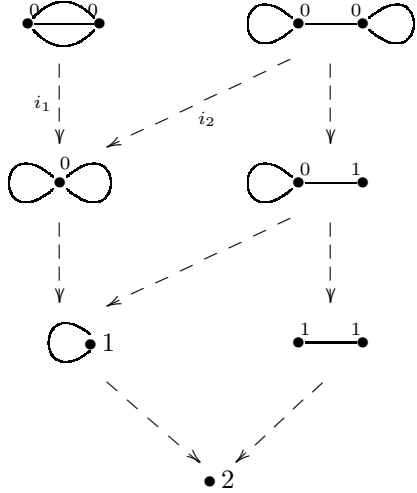

The cells corresponding to the two graphs on the top of Figure \ref{genus2fig}
are $\R^3_{\geq 0}/S_3$ and $\R^3_{\geq 0}/S_2$, respectively.
According to Definition \ref{Mgt},
$M_2^{\rm tr}$ corresponds to the
topological space obtained by gluing $\R^3_{\geq 0}/S_3$ and $\R^3_{\geq 0}/S_2$
along the points of $(\R^3_{\geq 0}/S_3) \setminus (\R^3_{>0}/S_3)$ and of
$(\R^3_{\geq 0}/S_2) \setminus (\R^3_{>0}/S_3)$ that correspond to common specializations
of those graphs according to the above diagram.
For instance, the specializations $i_1$ and $i_2$ induce the maps
\begin{center}
\begin{tabular}{rccccccrccc}
$\ov i_1:$&$\R^2_{\geq 0}$ &$\to$& $\R^3_{\geq 0}/S_3$& &and& &$\ov i_2:$&$\R^2_{\geq 0}$ &$\to$& $\R^3_{\geq 0}/S_2$,\\
&$(a_1,a_2)$& $\mapsto$ & $[(a_1,a_2,0)]$
&&&&&
$(a_1,a_2)$& $\mapsto$ & $[(a_1,0,a_2)]$
\end{tabular}
\end{center}
where in $\R^3_{\geq 0}/S_2$ the second coordinate
corresponds to the edge of the graph connecting the two vertices.
So, a point $[(x_1,x_2,x_3)]\in \R^3_{\geq 0}/S_3$ will be identified with a point
$[(y_1,y_2,y_3)]\in \R^3_{\geq 0}/S_2$ via the maps $\ov i_1$ and $\ov i_2$
if $y_2=0$ and if there exists $\sigma \in S_3$ such that
$(y_1,y_3)=(x_{\sigma(1)},x_{\sigma(2)})$ and $x_{\sigma(3)}=0$.
\end{example}

\begin{thm}\label{Mgt-pol}
The topological space $\Mgt$ is a stacky fan with cells
$C(\Gamma,w)$, as $(\Gamma,w)$ varies through all
stable marked graphs of genus $g$. In particular, its points are in bijection with
tropical curves of genus $g$.
\end{thm}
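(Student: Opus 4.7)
The plan is to verify the three axioms of Definition \ref{pol-compl} directly from the construction of $\Mgt$, and then to read off the bijection with tropical curves from the resulting cell decomposition.

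First, I would set up the collection of cones. By Remark \ref{finiteness}, there are finitely many stable marked graphs $(\Gamma,w)$ of genus $g$, so we have finitely many open cones $\R_{>0}^{|E(\Gamma)|}$, each being the relative interior of the rational polyhedral cone $\R_{\geq 0}^{|E(\Gamma)|}$. The groups $\Aut(\Gamma,w)$ act on these cones through the homomorphism $\rho_{(\Gamma,w)}$ into the permutation matrices in $\GL_{|E(\Gamma)|}(\Z)$, so the cone is indeed stabilized by an integral linear action, yielding $C(\Gamma,w)$ and $\ov{C(\Gamma,w)}$. Define the attachment map $\alpha_{(\Gamma,w)}:\ov{C(\Gamma,w)}\to\Mgt$ as the composition of the inclusion into $\coprod \ov{C(\Gamma',w')}$ with the quotient map; continuity is automatic from the quotient topology.

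Next I would verify axiom (ii) by invoking Remark \ref{Mgtrem}: part (ii) shows every equivalence class meets some open cell $C(\Gamma,w)$, while part (i) says that intersecting with a second open cell forces the two graphs and points to coincide. This simultaneously gives the disjoint union decomposition $\Mgt=\coprod_{(\Gamma,w)}\alpha_{(\Gamma,w)}(C(\Gamma,w))$ and, for axiom (i), the injectivity of $\alpha_{(\Gamma,w)}|_{C(\Gamma,w)}$. To promote injectivity to a homeomorphism onto its image, I would argue that the quotient topology on $\Mgt$ restricts to the quotient topology on $C(\Gamma,w)$ coming from $\R_{>0}^{|E(\Gamma)|}$, since the equivalence relation $\sim$ only identifies points of a given open cell $C(\Gamma,w)$ with points on the \emph{boundary} of other closed cells (by Remark \ref{Mgtrem}(i) and the observation immediately after the definition of $\overline{i}$).

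The key step is axiom (iii), governing the gluing along boundaries. A point of $\alpha_{(\Gamma_1,w_1)}(\ov{C(\Gamma_1,w_1)})\cap\alpha_{(\Gamma_2,w_2)}(\ov{C(\Gamma_2,w_2)})$ lies, by axiom (ii), in a unique cell $C(\Gamma,w)$ with $(\Gamma,w)$ a common specialization of $(\Gamma_1,w_1)$ and $(\Gamma_2,w_2)$. I would show that the inclusion of this intersection into $\ov{C(\Gamma_j,w_j)}$ is piecewise induced by an integral linear map as follows: a specialization $i_j:(\Gamma_j,w_j)\rightsquigarrow(\Gamma,w)$ corresponds to a subset of contracted edges, giving an injection $E(\Gamma)\hookrightarrow E(\Gamma_j)$ which produces the integral (in fact coordinate) inclusion $L_{i_j}:\R^{|E(\Gamma)|}\hookrightarrow\R^{|E(\Gamma_j)|}$ extending edge lengths by zero on contracted edges; this is exactly the map $\ov{i}_j$ defined before Definition \ref{Mgt}. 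Different choices of $i_j$ differ by the action of $\Aut(\Gamma_j,w_j)$, so descend to the same map on quotients, which is what is required by the diagram in axiom (iii).

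The main obstacle I expect is the topological bookkeeping in axiom (i), specifically showing that $\alpha_{(\Gamma,w)}|_{C(\Gamma,w)}$ is an \emph{open} map onto its image: one needs that if a sequence $p_n\in\Mgt$ converges to $\alpha_{(\Gamma,w)}(p)$ with $p\in C(\Gamma,w)$, then eventually $p_n\in\alpha_{(\Gamma,w)}(C(\Gamma,w))$ with preimages converging to $p$ in $C(\Gamma,w)$. This reduces to analyzing the equivalence relation $\sim$ near an interior point of an open cell: any point of a different open cell $C(\Gamma',w')$ identified via $\sim$ with something near $p$ would, by the definition of $\approx$, require a common specialization whose contracted-edge-length coordinates tend to zero—this lets the convergence be traced back into $\R_{>0}^{|E(\Gamma)|}$ rather than across cells. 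Once axioms (i)--(iii) are established, the bijection $\Mgt\longleftrightarrow\{\text{tropical curves of genus }g\}$ follows immediately from axiom (ii) together with the observation that $C(\Gamma,w)=\R_{>0}^{|E(\Gamma)|}/\Aut(\Gamma,w)$ parametrizes exactly the length functions $l:E(\Gamma)\to\R_{>0}$ up to isomorphisms of the combinatorial type $(\Gamma,w)$.
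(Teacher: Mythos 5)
Your outline --- verifying the three axioms of Definition \ref{pol-compl} via Remark \ref{Mgtrem} and specialization-induced coordinate maps, then reading off the bijection --- is the same as the paper's. The genuine gap is in the step you yourself flag as the main obstacle, namely that $\alpha_{(\Gamma,w)}|_{C(\Gamma,w)}$ is a homeomorphism onto its image. The criterion you propose is false: it is not true that a sequence $p_n\in\Mgt$ converging to $\alpha_{(\Gamma,w)}(p)$ with $p\in C(\Gamma,w)$ must eventually lie in $\alpha_{(\Gamma,w)}(C(\Gamma,w))$, unless $C(\Gamma,w)$ is a maximal cell. In genus $2$, let $(\Gamma,w)$ be the graph with one vertex of weight $0$ and two loops, let $p$ have both loop lengths equal to $1$, and let $p_n$ lie in the cell of the ``dumbbell'' graph (two loops joined by a bridge) with loop lengths $1$ and bridge length $1/n$: then $p_n\to\alpha_{(\Gamma,w)}(p)$ in $\Mgt$, yet no $p_n$ lies in the image of $C(\Gamma,w)$. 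For the same reason your heuristic that convergence ``can be traced back into $\R_{>0}^{|E(\Gamma)|}$ rather than across cells'' fails: a point of $C(\Gamma,w)$ is $\sim$-identified with boundary points of the closed cells of all graphs specializing to $(\Gamma,w)$, so the fibre of the quotient map over $\alpha_{(\Gamma,w)}(p)$ is strictly larger than an $\Aut(\Gamma,w)$-orbit, and nearby points of $\Mgt$ genuinely come from other cells. What must be shown is only that for every open $U\subseteq C(\Gamma,w)$ one has $\alpha_{(\Gamma,w)}(U)=\alpha_{(\Gamma,w)}(C(\Gamma,w))\cap W$ for some open $W\subseteq\Mgt$, i.e. one controls points \emph{of the image} near $\alpha_{(\Gamma,w)}(p)$, not arbitrary points of $\Mgt$; the paper disposes of this with a short preimage computation for the quotient map $\coprod\ov{C(\Gamma',w')}\to\Mgt$, whereas your proposal neither states nor proves the correct statement (and arguing by sequences in the quotient topology would itself need justification).

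A smaller mismatch concerns axiom \ref{pol-compl}(\ref{pol3}). It asks for a \emph{single} integral linear map $L:\R^{|E(\Gamma)|}\to\R^{|E(\Gamma')|}$, defined on the ambient space of one of the two closed cells being intersected, inducing the inclusion of the whole intersection $\alpha(\ov{C(\Gamma,w)})\cap\alpha'(\ov{C(\Gamma',w')})$ into $\alpha'(\ov{C(\Gamma',w')})$. You exhibit only, for each common specialization $(\Gamma_i,w_i)$, the coordinate inclusion $\R^{|E(\Gamma_i)|}\hookrightarrow\R^{|E(\Gamma')|}$ out of the small cone (your $L_{i_j}$, i.e.\ the maps $\ov{i}$), and call the inclusion ``piecewise induced''; these maps have the wrong source and are never assembled into the required $L$. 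The paper assembles it by composing the orthogonal projections $\R^{|E(\Gamma)|}\twoheadrightarrow\R^{|E(\Gamma_i)|}$ (forgetting the contracted edge coordinates) with those inclusions, producing one integral linear map on $\R^{|E(\Gamma)|}$. This is a routine repair; note also that your claim that two specializations $i_j$ differ by an element of $\Aut(\Gamma_j,w_j)$ is neither needed for the axiom nor obviously true.
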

\begin{proof}
 Let us prove the first statement, by checking the conditions
of Definition \ref{pol-compl}. Consider the maps
$\alpha_{(\Gamma,w)}:\ov{C(\Gamma,w)}\to \Mgt$ naturally
induced by $\ov{C(\Gamma,w)}\hookrightarrow \coprod \ov{C(\Gamma',w')}
\twoheadrightarrow \Mgt$. The maps $\alpha_{(\Gamma,w)}$ are continuous by
definition of the quotient topology and the restriction of $\alpha_{(\Gamma,w)}$
to $C(\Gamma,w)$ is a bijection onto its image by Remark \ref{Mgtrem}(i).
Moreover, given an open
subset $U\subseteq C(\Gamma,w)$, $\alpha_{(\Gamma,w)}(U)$ is an open subset of $\Mgt$
since its inverse image on $\coprod \ov{C(\Gamma',w')}$ is equal to $U$.
This proves that the maps  $\alpha_{(\Gamma,w)}$ when restricted to $C(\Gamma,w)$ are homeomorphisms
onto their images,
and condition \ref{pol-compl}(\ref{pol1}) is satisfied.

From Remark \ref{Mgtrem}(ii), we get that
\begin{equation}\label{disj-union}
\Mgt=\bigcup_{(\Gamma,w)} \alpha_{(\Gamma,w)}(C(\Gamma,w))
\end{equation}
and the union is disjoint by Remark \ref{Mgtrem}(i);
thus condition \ref{pol-compl}(\ref{pol2}) is satisfied.

Let us check the condition
\ref{pol-compl}(\ref{pol3}). Let $(\Gamma,w)$ and $(\Gamma',w')$ be two stable marked graphs of genus $g$
and set $\alpha:=\alpha_{(\Gamma,w)}$ and $\alpha':=\alpha_{(\Gamma',w')}$.
By definition of $\Mgt$, the intersection of the images of $\ov{C(\Gamma,w)}$
and $\ov{C(\Gamma',w')}$ in $\Mgt$ is equal to
$$\alpha (\ov{C(\Gamma,w)})\cap
\alpha' (\ov{C(\Gamma',w')})
=\coprod_i \alpha_i(C(\Gamma_i,w_i)),$$
where $(\Gamma_i,w_i)$ runs over all common specializations of $(\Gamma,w)$ and $(\Gamma',w')$.
We have to find an integral linear map $L:\R^{|E(\Gamma)|}\to \R^{|E(\Gamma')|}$
making the following diagram commutative
\begin{equation}\label{lin-diag}
\xymatrix{
\coprod_i \alpha_i (C(\Gamma_i,w_i)) \ar@{^{(}->}[rd] \ar@{^{(}->}[r]&
\alpha (\ov{C(\Gamma,w)})& \R_{\geq 0}^{|E(\Gamma|} \ar@{^{(}->}[r] \ar@{->>}[l]
\ar[d]^L& \R^{|E(\Gamma)|} \ar[d]^{L} \\
&\alpha'(\ov{C(\Gamma',w')})& \R_{\geq 0}^{|E(\Gamma')|} \ar@{^{(}->}[r] \ar@{->>}[l]
& \R^{|E(\Gamma')|} .}
\end{equation}
To this aim, observe that, since $(\Gamma_i, w_i)$ are specializations of both $(\Gamma,w)$ and
$(\Gamma',w')$, there are orthogonal projections $f_i:\R^{|E(\Gamma)|}\twoheadrightarrow \R^{|E(\Gamma_i)|}$
and inclusions $g_i: \R^{|E(\Gamma_i)|} \hookrightarrow \R^{|E(\Gamma')|}$.
 We define $L$ as the composition
$$L:\R^{|E(\Gamma)|}\stackrel{\oplus f_i}{\longrightarrow} \oplus_i
\R^{|(E(\Gamma_i)|} \stackrel{\oplus g_i}{\longrightarrow}
\R^{|E(\Gamma)|}.$$
It is easy to see that $L$ is an integral linear map making
the above diagram (\ref{lin-diag}) commutative, and this concludes the proof
of the first statement.

The second statement follows from (\ref{disj-union}) and the fact,
already observed before, that $C(\Gamma,w)$ parametrizes
tropical curves of combinatorial type $(\Gamma,w)$.
\end{proof}

We now prove that $\Mgt$ is of pure dimension and connected through codimension one.
To that aim, we describe the maximal cells and the codimension one cells of $\Mgt$.

\begin{prop}\label{prop-Mgt}
\noindent
\begin{enumerate}[(i)]
\item\label{Mgt-pure}
The maximal cells of $\Mgt$ are exactly those of the form $C(\Gamma,\un{0})$ where
$\Gamma$ is $3$-regular. In particular,
$\Mgt$ is of pure dimension $3g-3$.
\item \label{Mgt-conn} $\Mgt$ is connected through codimension one.
\item \label{Mgt-codim1}
The codimension one cells of $\Mgt$ are of the following two types:
\begin{enumerate}
\item[(a)] $C(\Gamma,\un{0})$ where  $\Gamma$ has exactly one vertex
of valence $4$ and all other vertices of valence $3$;
\item[(b)] $C(\Gamma,w)$ where $\Gamma$ has exactly one vertex $v$ of
valence $1$ and weight $1$, and all the other vertices of valence $3$ and weight
$0$.
\end{enumerate}
Each codimension one cell of type $(b)$ lies in the closure of exactly one
maximal cell, while each codimension one cell of type $(a)$ lies in
the closure of one, two or three maximal cells.
\end{enumerate}
\end{prop}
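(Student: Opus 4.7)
The plan is to analyze the cells $C(\Gamma,w)$ via Euler's relation $|E(\Gamma)| = g - |w| + |V(\Gamma)| - 1$ (which follows from $g = b_1(\Gamma) + |w|$) and the handshake lemma $2|E(\Gamma)| = \sum_v \val(v)$, subject to the stability constraint $\val(v) \geq 3$ whenever $w(v) = 0$. All three parts will follow by combining these identities with explicit constructions or classifications of un-specializations.

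For part (i), I first observe that if $|w| = 0$, then stability and the handshake lemma combined with Euler's relation give $|E(\Gamma)| \leq 3g - 3$, with equality iff $\Gamma$ is $3$-regular. If $|w| > 0$, the same estimates yield a strict inequality $|E(\Gamma)| < 3g - 3$. To show that cells $C(\Gamma, \un{0})$ with $\Gamma$ $3$-regular are actually maximal in $\Mgt$, I note that any nontrivial edge contraction onto such a graph would either contract a loop (creating a vertex of positive weight) or merge two $3$-valent vertices into a $4$-valent one, contradicting the hypotheses. Conversely, for any stable $(\Gamma,w)$ with $|E(\Gamma)| < 3g - 3$, I will exhibit an un-specialization: if $|w| > 0$, pick a vertex $v$ with $w(v) \geq 1$ and un-contract a loop at $v$ (i.e.\ replace $v$ by a vertex $v'$ of weight $w(v) - 1$ carrying a new loop, which is stable for $g \geq 2$); if $|w| = 0$, then the strict inequality forces some vertex to have valence $\geq 4$, and we split it into two vertices of valence $\geq 3$ joined by a new edge. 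In either case the new stable marked graph has genus $g$ and one more edge than $(\Gamma,w)$, placing $C(\Gamma,w)$ in the boundary of a strictly larger cell.

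For part (iii), setting $|E(\Gamma)| = 3g - 4$ in the same framework and writing $a,b$ for the numbers of zero-weight and positive-weight vertices, Euler gives $a + b - |w| = 2g - 3$, which together with $|w| \geq b$ forces $a \geq 2g - 3$. The handshake inequality $6g - 8 \geq 3a + b \geq 6g - 9 + b$ then forces $b \leq 1$. When $b = 0$ the slack in the handshake bound is exactly one, so one vertex has valence $4$ and all others valence $3$ (type (a)); when $b = 1$, the resulting equation $3|w| + \val(v_0) = 4$ together with $|w|, \val(v_0) \geq 1$ (for $g \geq 2$) forces $|w| = \val(v_0) = 1$ (type (b)). For the count of maximal cells containing each codimension-one cell, I observe that un-specializing to a $3$-regular graph with trivial weight function amounts to un-contracting a single edge. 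For type (b), the weight-$1$ vertex $v$ must arise from contracting a loop of the un-specialized graph (since non-loop contractions preserve vanishing of weights), and this loop un-contraction is uniquely determined, giving exactly one maximal cell. For type (a) the $4$-valent vertex must be split into two $3$-valent vertices joined by a new edge, which amounts to partitioning the four half-edges at $v$ into two unordered pairs; there are $\binom{4}{2}/2 = 3$ such partitions, yielding $1$, $2$, or $3$ distinct maximal cells after quotienting by the relevant graph automorphisms.

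For part (ii), parts (i) and (iii) identify the maximal cells with isomorphism classes of $3$-regular graphs of genus $g$, and show that two such maximal cells share a codimension-one cell in their closures iff they are related by a Whitehead-type flip: contract an edge between two $3$-valent vertices to a $4$-valent one, then re-expand in one of the other (up to two) ways. Connectedness of $\Mgt$ through codimension one is thus equivalent to connectedness of the flip graph on isomorphism classes of $3$-regular graphs of genus $g$. This is the main technical obstacle: the statement is classical (essentially equivalent to the connectedness of the spine of Culler--Vogtmann's Outer Space), and I plan either to invoke the known result or to prove it directly by induction on $g$, reducing any $3$-regular graph via flips to a convenient normal form (for instance a chain of theta-graphs) from which connectivity is immediate.
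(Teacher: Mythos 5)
Your proposal is correct and follows essentially the same route as the paper: the valence/weight inequality giving $|E(\Gamma)|\leq 3g-3-|w|$ with its equality analysis, the classification of codimension-one cells into types (a) and (b), the adjacency counts via loop-uncontraction and the three splittings of the $4$-valent vertex, and the reduction of part (ii) to the classical twist/flip-connectivity of $3$-regular graphs of genus $g$, which the paper likewise does not reprove but cites (\cite{HT}, \cite{Tsu}, \cite{Cap}). The only notable variation is that you prove pure-dimensionality by an explicit step-by-step un-specialization, where the paper instead cites \cite[Appendix A.2]{CV1}.
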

\begin{proof}
First of all, observe that given a stable marked graph $(\Gamma,w)$
of genus $g$ we have
\begin{equation}\label{inequ-dim}
3|V(\Gamma)|\leq \sum_{v\in V(\Gamma)}\left[\val(v)+2w(v)\right]=2|E(\Gamma)|+2|w|,
\end{equation}
and the equality holds if and only if every $v\in V(\Gamma)$ is such that
either $w(v)=0$ and $\val(v)=3$ or $w(v)=\val(v)=1$. By substituting the formula
for the genus $g=g(\Gamma,w)=g(\Gamma)+|w|=1+|E(\Gamma)|-|V(\Gamma)|+|w|$ in inequality (\ref{inequ-dim}), we obtain
\begin{equation}\label{max-dim}
|E(\Gamma)|\leq 3g-3-|w|.
\end{equation}

Let us now prove part (\ref{Mgt-pure}). If $\Gamma$ is $3$-regular and $w\equiv 0$,
then $g(\Gamma)=g(\Gamma,w)=g$ and an easy calculation gives that $|E(\Gamma)|=3g-3$.
Therefore $\dim (C(\Gamma,\un{0}))=3g-3$, which is the maximal possible dimension of the cells
of $\Mgt$ according to the above inequality (\ref{max-dim}). Hence $C(\Gamma,\un{0})$
is maximal. On the other hand, every stable marked graph $(\Gamma',w')$ can be obtained
by specializing a stable marked graph $(\Gamma,\un{0})$ with
$\Gamma$ a $3$-regular graph (see for example \cite[Appendix A.2]{CV1}),
which concludes the proof of part (\ref{Mgt-pure}).

Let us prove part (\ref{Mgt-conn}). It is well-known
(see the appendix of \cite{HT} for a topological proof, \cite[Thm. II]{Tsu}
for a combinatorial proof in the case of simple graphs and \cite[Thm 3.3]{Cap}
for a combinatorial proof in the general case)
that any two $3$-regular graphs $\Gamma_1$ and $\Gamma_2$ of genus $g$
can be obtained one from the other via a sequence of twisting operations
as the one shown in the top line of Figure \ref{twist-3reg} below.
In each of these twisting operations,
the two graphs $\Gamma_1$ and $\Gamma_2$ specialize to a common graph $\Gamma$
(see Figure \ref{twist-3reg}) that has one vertex of valence $4$ and all the others
of valence $3$. By what will be proved below, $C(\Gamma,\un{0})$ is a codimension one cell.
 Therefore the two maximal dimensional cells $C(\Gamma_1,\un{0})$ and $C(\Gamma_2,\un{0})$ contain
 a common codimension one cell $C(\Gamma,\un{0})$ in their closures, which concludes
the proof of part (\ref{Mgt-conn}).
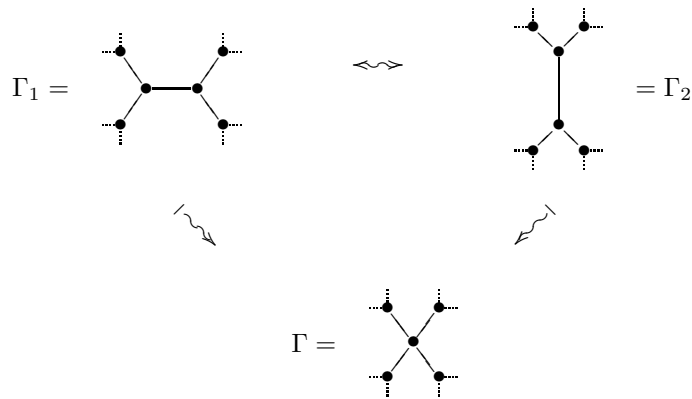
\begin{figure}[h]
\begin{tabular}{ccc}
\xymatrix@=0,3pc{
\\
&& \ar @{..}[d] &&&&\ar @{..}[d]\\
&& *{\bullet} \ar @{-}[dr] \ar @{..}[l] &&&& *{\bullet} \ar @{-}[dl] \ar @{..}[r] &\\
\Gamma_1=&&& *{\bullet} \ar @{-}[dl] \ar @{-}[rr] && *{\bullet}
\ar @{-}[dr] \\
&& *{\bullet} \ar @{..}[l] &&&& *{\bullet}  \ar @{..}[r]&\\
&& \ar @{..}[u] &&&& \ar @{..}[u]\\
\\
&&&&\ar @{|~>}[ddrr]\\
\\
&&&&&&
}

&
\xymatrix@=0,5pc{
\\
\\
\ar @{<~>}[rr]&&
}

&
\xymatrix@=0,3pc{
 &\ar @{..}[d] &&\ar @{..}[d]\\
\ar @{..}[r]& *{\bullet} \ar @{-}[dr]  && *{\bullet} \ar @{-}[dl] \ar @{..}[r] &\\
&& *{\bullet} \ar @{-}[dd] \\
&&&&& =\Gamma_2 \\
&& *{\bullet} \ar @{-}[dl] \ar @{-}[dr]\\
& *{\bullet} \ar @{..}[l] && *{\bullet}  \ar @{..}[r]&\\
& \ar @{..}[u] && \ar @{..}[u]\\
&&\ar @{|~>}[ddll]\\
\\
&&&&
}\\

& \xymatrix@=0,3pc{
&& \ar @{..}[d] &&\ar @{..}[d]\\
&\ar @{..}[r]& *{\bullet} \ar @{-}[dr]  && *{\bullet} \ar @{-}[dl] \ar @{..}[r] &\\
\Gamma=&&& *{\bullet} \ar @{-}[dl] \ar @{-}[dr]\\
&& *{\bullet} \ar @{..}[l] && *{\bullet}  \ar @{..}[r]&\\
&& \ar @{..}[u] && \ar @{..}[u]\\
}
\end{tabular}

\caption{The $3$-regular graphs $\Gamma_1$ and $\Gamma_2$ are twisted. They both specialize
to $\Gamma$. $C(\Gamma_1, \un{0})$ and $C(\Gamma_2,\un{0})$ are maximal dimensional
cells containing the codimension one cell $C(\Gamma,\un{0})$ in their closures.}\label{twist-3reg}
\end{figure}

Let us prove part (\ref{Mgt-codim1}). Let $C(\Gamma,w)$ be a codimension one cell
of $\Mgt$, i.e. such that $|E(\Gamma)|=3g-4$. According to the inequality
(\ref{max-dim}), there are two possibilities: either $|w|=0$ or $|w|=1$.
In the first case, i.e. $|w|=0$, using the inequality in
(\ref{inequ-dim}), it is easy to check that there should exist exactly
one vertex $v$ such that $\val(v)=4$ and all the other vertices
should have valence equal to $3$, i.e. we are in case (a).
In the second case, i.e. $|w|=1$, all the inequalities in (\ref{inequ-dim})
should be equalities and this implies that there should be exactly
one vertex $v$ such that $\val(v)=w(v)=1$ and all the other vertices have weight
equal to zero and valence equal to $3$, i.e. we are in case (b).

For a codimension one cell of type (a), $C(\Gamma,\un{0})$, there can be at most three
maximal cells $C(\Gamma_i,\un{0})$ ($i=1, 2, 3$) containing it in their closures,
as we can see in Figure \ref{spec-a}.
Note, however, that it can happen
that some of the $\Gamma_i$'s are isomorphic, and in that case the number of maximal
cells containing $C(\Gamma,\un{0})$ in their closure is strictly smaller than $3$.

\begin{figure}[h]
\begin{tabular}{ccc}
\xymatrix@=0,3pc{
\\
&& *{\bullet} \ar @{-}^<{1}[dr] &&&& *{\bullet} \ar @{-}_<{3}[dl]  &\\
\Gamma_1=&&& *{\bullet} \ar @{-}^>{2}[dl]\ar @{-}[rr] && *{\bullet}\ar @{-}_>{4}[dr] \\
&& *{\bullet}&&&& *{\bullet} &\\
\\
&&&&\ar @{|~>}[ddrr]\\
\\
&&&&&&
}
&
\xymatrix@=0,3pc{
\\
&& *{\bullet} \ar @{-}^<{1}[dr] &&&& *{\bullet} \ar @{-}_<{2}[dl]  &\\
\Gamma_2=&&& *{\bullet} \ar @{-}^>{3}[dl]\ar @{-}[rr] && *{\bullet}\ar @{-}_>{4}[dr] \\
&& *{\bullet}&&&& *{\bullet} &\\
\\
&&&&\ar @{|~>}[dd]\\
\\
&&&&&&
}
&
\xymatrix@=0,3pc{
\\
&& *{\bullet} \ar @{-}^<{1}[dr] &&&& *{\bullet} \ar @{-}_<{2}[dl]  &\\
\Gamma_3=&&& *{\bullet} \ar @{-}^>{4}[dl]\ar @{-}[rr] && *{\bullet}\ar @{-}_>{3}[dr] \\
&& *{\bullet}&&&& *{\bullet} &\\
\\
&&&&\ar @{|~>}[ddll]\\
\\
&&&&&&
}
\\
& \xymatrix@=0,3pc{
&& *{\bullet} \ar @{-}_<{1}[dr]  && *{\bullet} \ar@{-}^<{3}[dl] &\\
\Gamma=&&& *{\bullet} \ar @{-}_>{2}[dl] \ar@{-}^>{4}[dr]\\
&& *{\bullet} && *{\bullet}  &\\
}
\end{tabular}

\caption{The codimension one cell $C(\Gamma,\un{0})$ is contained in the closure
of the three maximal cells $C(\Gamma_i,\un{0})$, $i=1, 2, 3$.}\label{spec-a}
\end{figure}
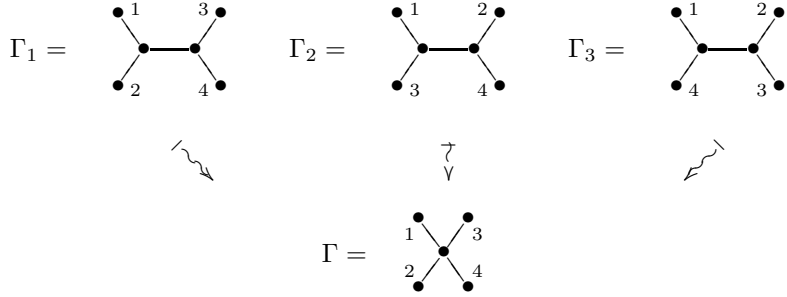

For a codimension one cell $C(\Gamma, w)$ of
type (b), there is only one maximal cell $C(\Gamma',\un{0})$ containing it in its
closure, as we can see in Figure \ref{spec-b} below.
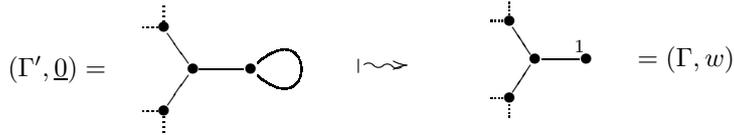
\begin{figure}[h]
\begin{tabular}{ccc}
\xymatrix@=0,4pc{
\\
&& \ar @{..}[d] &&&&\\
&& *{\bullet} \ar @{-}[dr] \ar @{..}[l] &&&& &\\
(\Gamma',\un{0})=&&& *{\bullet} \ar @{-}[dl] \ar @{-}[rr] && *{\bullet} \ar@{-}@(ur,dr) \\
&& *{\bullet} \ar @{..}[l] &&&&  \\
&& \ar @{..}[u] &&&& \\
\\
&&&&&&
}
&
\xymatrix@=0,5pc{
\\
\\
\\
\ar @{|~>}[rr]&&
}
&
\xymatrix@=0,3pc{
\\
&& \ar @{..}[d] &&&&\\
&& *{\bullet} \ar @{-}[dr] \ar @{..}[l] &&&& &\\
&&& *{\bullet} \ar @{-}[dl] \ar@{-}^>{1}[rr] && *{\bullet} &&=(\Gamma,w)\\
&& *{\bullet} \ar @{..}[l] &&&&  \\
&& \ar @{..}[u] &&&& \\
\\
&&&&&&
}
\end{tabular}
\caption{The codimension one cell $C(\Gamma,w)$ is contained
in the closure of the maximal dimensional cell
$C(\Gamma',\un{0})$.}\label{spec-b}
\end{figure}
\end{proof}


\section{The moduli space $\Agt$}

\subsection{Tropical abelian varieties}

\begin{defi}\label{ps-abvar}
A principally polarized \emph{tropical abelian variety} $A$ of dimension
$g$ is a $g$-dimensional real torus $\R^g/\Lambda$, where $\Lambda$ is a lattice
of rank $g$ in $\R^g$ endowed with a flat semi-metric
induced by a positive semi-definite quadratic form $Q$ on $\R^g$ such that the null space
$\Null(Q)$ of $Q$ is defined over $\Lambda\otimes \Q$, i.e. it admits a basis with elements in $\Lambda\otimes \Q$. Two tropical abelian varieties
$(\R^g/\Lambda,Q)$ and $(\R^g/\Lambda',Q')$ are isomorphic if there exists
$h\in GL(g,\R)$ such that $h(\Lambda)=\Lambda'$ and $hQh^t=Q'$.
\end{defi}

From now on, we will drop the attribute principally polarized as all the
tropical abelian varieties that we will consider are of this kind.

\begin{remark}\label{comp-MK2}
The above definition generalizes the definition of tropical
abelian variety given by Mikhalkin-Zharkov in \cite[Sec. 5]{MZ}.
More precisely, tropical abelian varieties endowed with positive definite
quadratic forms in our sense are the same as
(principally polarized) tropical abelian varieties
in the sense of Mikhalkin-Zharkov.
\end{remark}

\begin{remark}\label{aritequiv-quadr}
Every tropical abelian variety $(\R^g/\Lambda,Q)$ can be written in the form
$(\R^g/\Z^g,Q')$. In fact, it is enough to consider $Q'=hQh^t$, where
$h\in GL(g,\R)$ is such that $h(\Lambda)=\Z^g$.
Moreover, $(\R^g/\Z^g,Q)\cong (\R^g/\Z^g,Q')$ if and
only if there exists $h\in \GL_g(\Z)$ such that $Q'=h Q h^t$, i.e., if and only if
$Q$ and $Q'$ are arithmetically equivalent.
Therefore, from now on we will always consider our tropical abelian varieties
in the form $(\R^g/\Z^g,Q)$, where $Q$ is uniquely defined up to arithmetic equivalence.
\end{remark}

\subsection{Definition of $\Agt$ and $\Agcom$}

Let us denote by $\R^{\binom{g+1}{2}}$ the vector space of quadratic forms
in $\R^g$ (identified with $g\times g$ symmetric matrices with
coefficients in $\R$), by
$\O$ the cone in $\R^{\binom{g+1}{2}}$ of positive
definite quadratic forms and by $\Ort$ the cone of positive semi-definite quadratic
forms with rational null space (the so-called
rational closure of $\O$, see \cite[Sec. 8]{NamT}).

The group $\GL_g(\Z)$ acts on $\R^{\binom{g+1}{2}}$
via the usual law $h\cdot Q:= h Q h^t$, where $h\in \GL_g(\Z)$ and
$Q$ is a quadratic form on $\R^g$. This action naturally defines a homomorphism
$\rho:\GL_g(\Z)\to \GL_{\binom{g+1}{2}}(\Z)$.
Note that the cones $\O$ or $\Ort$ are preserved by the action of $\GL_g(\Z)$.

\begin{remark}\label{rat-qua}
It is well-known (see \cite[Sec. 8]{NamT}) that a positive semi-definite
quadratic form $Q$ in $\R^g$ belongs to $\Ort$ if and only if
there exists $h\in \GL_g(\Z)$ such that
$$hQh^t=
\left(\begin{array}{cc}
Q' & 0 \\
0 &  0 \\
\end{array}\right)
$$
for some positive definite quadratic form $Q'$ in $\R^{g'}$, with $0\leq g'\leq g$.
\end{remark}

\begin{defi}\label{Agtr}
We define $\Agt$ as the topological space (with respect to the quotient
topology)
$$\Agt:=\Ort/\GL_g(\Z).
$$
\end{defi}

The space $\Agt$ parametrizes tropical abelian varieties as it follows
from Remark \ref{aritequiv-quadr}. However, in order to endow $\Agt$ with the
structure of stacky fan, we need to specify some
extra-data, encoded
in the following definition (see \cite[Lemma 8.3]{NamT} or
\cite[Chap. IV.2]{FC}).

\begin{defi}\label{decompo}
A $\GL_g(\Z)$-\emph{admissible decomposition} of $\Ort$ is a collection $\Sigma=\{\sigma_{\mu}\}$ of rational polyhedral cones of
$\Ort$ such that:
\begin{enumerate}
\item If $\sigma$ is a face of $\sigma_{\mu}\in \Sigma$ then $\sigma\in \Sigma$;
\item The intersection of two cones $\sigma_{\mu}$ and $\sigma_{\nu}$ of $\Sigma$ is a face of both cones;
\item If $\sigma_{\mu}\in \Sigma$ and $h\in \GL_g(\Z)$ then $h\cdot \sigma_{\mu}
\cdot h^t\in \Sigma$.
\item $\#\{\sigma_{\mu}\in \Sigma \mod \GL_g(\Z)\}$ is finite;
\item $\cup_{\sigma_{\mu}\in \Sigma} \sigma_{\mu}=\Ort$.
\end{enumerate}
\end{defi}

Each $\GL_g(\Z)$-admissible decomposition of $\Ort$ gives rise to a structure of
stacky fan on $\Agt$.
In order to prove that, we need first to set some notations.

Let $\Sigma=\{\sigma_{\mu}\}$ be a $\GL_g(\Z)$-admissible decomposition
of $\Ort$. For each $\sigma_{\mu}\in \Sigma$ we set
$\sigma_{\mu}^0:=\Int(\sigma_{\mu})$; we denote by $\langle \sigma_{\mu}\rangle$ the smallest
linear subspace of $\R^{\binom{g+1}{2}}$ containing $\sigma_{\mu}$ and we set
$m_{\mu}:=\dim_{\R}\langle \sigma_{\mu}\rangle$.
Consider the stabilizer of $\sigma_{\mu}^0$ inside $\GL_g(\Z)$
$$\Stab(\sigma_{\mu}^0):=\{h\in \GL_g(\Z)\: : \: \rho(h)\cdot \sigma_{\mu}^0=
h\cdot \sigma_{\mu}^0
\cdot h^t=\sigma_{\mu}^0\}.$$
The restriction of the homomorphism $\rho$ to $\Stab(\sigma_{\mu}^0)$ defines
a homomorphism
$$\rho_{\mu}:\Stab(\sigma_{\mu}^0)\to \GL(\langle \sigma_{\mu}\rangle, \Z)=\GL_{m_{\mu}}(\Z).$$
By definition, the image $\rho_{\mu}(\Stab(\sigma_{\mu}^0))$ acts on $\langle \sigma_{\mu}\rangle=
\R^{m_\mu}$
and stabilizes the cone $\sigma_{\mu}^0$, defining an action of $\Stab(\sigma_{\mu}^0)$ on $\sigma_{\mu}^0$.
Note that $\GL_g(\Z)$ naturally acts on the set of quotients $\{\sigma_{\mu}^0/\Stab(\sigma_{\mu}^0)\}$;
we will denote by $\{[\sigma_{\mu}^0/\Stab(\sigma_{\mu}^0)]\}$ the (finite) orbits of this action.



\begin{thm}\label{AgS}
Let $\Sigma$ be a $\GL_g(\Z)$-admissible decomposition of $\Ort$.
The topological space $\Agt$ can be endowed
with the structure of a stacky fan with cells
$[\sigma_{\mu}^0/\Stab(\sigma_{\mu}^0)]$, which we denote by $\Agcom$.
\end{thm}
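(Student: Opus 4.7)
The plan is to exhibit the data required by Definition \ref{pol-compl}. First, I would choose a set of representatives $\{\sigma_\mu\}_{\mu\in I}$ for the $\GL_g(\Z)$-equivalence classes of cones in $\Sigma$; this set is finite by axiom (4) of Definition \ref{decompo}. For each such $\sigma_\mu$, the stabilizer $\Stab(\sigma_\mu^0)$ acts on $\langle \sigma_\mu\rangle\cong \R^{m_\mu}$ via the integral representation $\rho_\mu$, and this action preserves $\sigma_\mu^0$ together with its closure $\ov{\sigma_\mu}$. The natural candidate for the cell-attaching map is
\[
\alpha_\mu\colon \ov{\sigma_\mu}/\Stab(\sigma_\mu^0) \longrightarrow \Agt,
\]
induced by the inclusion $\ov{\sigma_\mu}\hookrightarrow \Ort$; it is continuous by the universal property of the quotient topology.

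For conditions (i) and (ii) of Definition \ref{pol-compl}, the key input is that $\Sigma$ is a polyhedral decomposition of $\Ort$: every $Q\in \Ort$ lies in the relative interior of exactly one cone of $\Sigma$, and whenever $h\cdot\sigma_\mu^0\cdot h^t$ and $\sigma_\nu^0$ share a point for some $h\in \GL_g(\Z)$, they must coincide (two open cells of a polyhedral complex are either equal or disjoint). The resulting $\GL_g(\Z)$-equivariant partition $\Ort=\coprod_\mu \GL_g(\Z)\cdot \sigma_\mu^0$ descends to the decomposition $\Agt=\coprod_\mu \alpha_\mu(\sigma_\mu^0/\Stab(\sigma_\mu^0))$ required by (ii). The same disjointness fact shows that if $Q_1,Q_2\in \sigma_\mu^0$ are $\GL_g(\Z)$-equivalent via some $h$, then necessarily $h\in \Stab(\sigma_\mu^0)$, which gives the injectivity of $\alpha_\mu$ on the cell $\sigma_\mu^0/\Stab(\sigma_\mu^0)$; compatibility of the quotient topologies then upgrades this bijection onto its image to a homeomorphism, so condition (i) holds.

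The main obstacle is condition (iii): producing an integral linear map $L\colon \R^{m_\mu}\to \R^{m_\nu}$ that induces, at the level of the cell closures, the natural inclusion of $\alpha_\mu(\ov{\sigma_\mu}/\Stab(\sigma_\mu^0))\cap \alpha_\nu(\ov{\sigma_\nu}/\Stab(\sigma_\nu^0))$ into $\alpha_\nu(\ov{\sigma_\nu}/\Stab(\sigma_\nu^0))$. Unravelling definitions, this intersection decomposes as a disjoint union $\coprod_i \alpha_{\tau_i}(\tau_i^0/\Stab(\tau_i^0))$ indexed by the $\GL_g(\Z)$-equivalence classes of cones $\tau_i\in\Sigma$ that simultaneously appear as a face of $\sigma_\mu$ and as a face of some $\GL_g(\Z)$-translate of $\sigma_\nu$. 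For each such $\tau_i$, I would fix elements $h_i^\mu, h_i^\nu\in \GL_g(\Z)$ realizing these two equivalences and obtain integral linear maps $f_i\colon \R^{m_\mu}\twoheadrightarrow \R^{m_{\tau_i}}$ (orthogonal projection onto the chosen face representative of $\tau_i$ inside $\sigma_\mu$, composed with $h_i^\mu$) and $g_i\colon \R^{m_{\tau_i}}\hookrightarrow \R^{m_\nu}$ (the map $(h_i^\nu)^{-1}$ followed by the inclusion of the corresponding face into $\sigma_\nu$). Following the template of the proof of Theorem \ref{Mgt-pol}, the required map is then
\[
L\colon \R^{m_\mu}\xrightarrow{\oplus f_i}\bigoplus_i \R^{m_{\tau_i}}\xrightarrow{\oplus g_i}\R^{m_\nu},
\]
and the commutativity of the diagram of Definition \ref{pol-compl}(iii) is checked face by face, using that the contributions from faces $\tau_j$ with $j\neq i$ are absorbed by the $\Stab(\sigma_\nu^0)$-quotient on the target.
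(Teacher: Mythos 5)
Your proposal follows the paper's own proof essentially step for step: the same attaching maps $\alpha_\mu$ on the chosen representatives $\sigma_\mu/\Stab(\sigma_\mu^0)$, the same verification of conditions (i) and (ii) from the definition of $\Agt$ and the axioms of an admissible decomposition, and the same construction of the integral linear map $L$ for condition (iii) as a sum of orthogonal projections onto the common faces (transported by fixed elements $h_{i1},h_{i2}\in\GL_g(\Z)$) followed by inclusions, modeled on the proof of Theorem \ref{Mgt-pol}. The approach is correct and coincides with the paper's argument.
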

\begin{proof}
Fix a set $\S=\{\sigma_{\mu}^0/\Stab(\sigma_{\mu}^0)\}$ of representatives for
the orbits $[\sigma_{\mu}^0/\Stab(\sigma_{\mu}^0)]$.
For each element $\sigma_{\mu}^0/\Stab(\sigma_{\mu}^0)\in \S$, consider the
continuous map
$$\alpha_{\mu}:\frac{\sigma_{\mu}}{\Stab(\sigma_{\mu}^0)}\to\Agt,$$
induced by the inclusion $\sigma_{\mu} \hookrightarrow \Ort$.
By the definition of $\Agt$ it is clear that $\alpha_{\mu}$ sends
$\sigma_{\mu}^0/\Stab(\sigma_{\mu}^0)$ homeomorphically onto its image and also that
$$\bigcup \alpha_{\mu}\left(\frac{\sigma_{\mu}^0}{\Stab(\sigma_{\mu}^0)}\right)=\Agt,$$
where the union runs through all the elements of $\S$. Therefore the first
two conditions of definition \ref{pol-compl} are satisfied.
Let us check the condition \ref{pol-compl}(\ref{pol3}). Consider two elements
$\{\sigma_{\mu_1}^0/\Stab(\sigma_{\mu_1}^0)\}$ and
$\{\sigma_{\mu_2}^0/\Stab(\sigma_{\mu_2}^0)\}$ of $\S$. Clearly,
the intersection of the images of $\sigma_{\mu_1}/\Stab(\sigma_{\mu_1}^0)$
and $\sigma_{\mu_2}/\Stab(\sigma_{\mu_2}^0)$ in $\Agt$ can be written
in the form
$$\alpha_{\mu_1}\left(\frac{\sigma_{\mu_1}}{\Stab(\sigma_{\mu_1}^0)}\right)
\cap \alpha_{\mu_2}\left(\frac{\sigma_{\mu_2}}{\Stab(\sigma_{\mu_2}^0)}\right)
=\coprod_i \alpha_{\nu_i}\left(\frac{\sigma_{\nu_i}^0}{\Stab(\sigma_{\nu_i}^0)}\right),$$
where $\sigma_{\nu_i}^0/\Stab(\sigma_{\nu_i}^0)$ are the elements of $\S$ such that
there exist elements $h_{i1}, h_{i2}\in \GL_g(\Z)$ such that $h_{i1}\sigma_{\nu_i}
h_{i1}^t$ is a face of the cone $\sigma_{\mu_1}$ and $h_{i2}\sigma_{\nu_i}
h_{i2}^t$ is a face of the cone $\sigma_{\mu_2}$. Note that the above elements
$h_{i1}$ and $h_{i2}$ are not unique, but we will fix a choice for them
in what follows.
We have to find an integral linear map $L:\langle\sigma_{\mu_1}\rangle=\R^{m_{\mu_1}}
\to \langle \sigma_{\mu_{2}}\rangle=\R^{m_{\mu_2}}$ making the following diagram commutative
\begin{equation}\label{lin-diag2}
\xymatrix{
\coprod_i \alpha_{\nu_i}\left(\frac{\sigma_{\nu_i}^0}{\Stab(\sigma_{\nu_i}^0)}\right)
\ar@{^{(}->}[rd] \ar@{^{(}->}[r]&
\alpha_{\mu_1}\left(\frac{\sigma_{\mu_1}}{\Stab(\sigma_{\mu_1}^0)}\right)& \sigma_{\mu_1} \ar@{^{(}->}[r] \ar@{->>}[l]
\ar[d]^L& \langle \sigma_{\mu_1}\rangle=\R^{m_{\mu_1}} \ar[d]^{L} \\
&\alpha_{\mu_2}\left(\frac{\sigma_{\mu_2}}{\Stab(\sigma_{\mu_2}^0)}\right)& \sigma_{\mu_2}
 \ar@{^{(}->}[r] \ar@{->>}[l]& \langle \sigma_{\mu_2}\rangle=\R^{m_{\mu_2}} .}
\end{equation}
Consider the integral linear maps
$$\begin{sis}
\pi_i:&\langle \sigma_{\mu_1}\rangle=\R^{m_{\mu_1}}\stackrel{\w{\pi_i}}{\twoheadrightarrow}
\langle \rho(h_{i1})(\sigma_{\nu_i})\rangle \stackrel{\rho(h_{i1}^{-1})}{\longrightarrow}
\langle \sigma_{\nu_i}\rangle:=\R^{m_{\nu_i}}, \\
\gamma_i:& \langle \sigma_{\nu_i}\rangle=\R^{m_{\nu_i}} \stackrel{\rho(h_{i2})}{\longrightarrow}
\langle \rho(h_{i2})(\sigma_{\nu_i})\rangle\stackrel{\w{\gamma_i}}{\hookrightarrow}
\langle \sigma_{\mu_2}\rangle=\R^{m_{\mu_2}},\\
\end{sis}$$
where $\w{\pi_i}$ is the orthogonal projection of $\langle \sigma_{\mu_1}\rangle$ onto its
subspace $\langle \rho(h_{i1})(\sigma_{\nu_i})\rangle$ and $\w{\gamma_i}$ is the natural inclusion
of $\langle \rho(h_{i2})(\sigma_{\nu_i})\rangle$ onto $\langle \sigma_{\mu_2}\rangle$.
We define the following  integral linear map
$$L:\R^{m_{\mu_1}}\stackrel{\oplus_i \pi_i}{\longrightarrow} \bigoplus \R^{m_{\nu_i}} \stackrel{\oplus_i \gamma_i}{\longrightarrow} \R^{m_{\mu_2}}.$$
It is easy to see that $L$ is an integral linear map making
the above diagram (\ref{lin-diag2}) commutative, and this concludes the proof.
\end{proof}

\subsection{Voronoi decomposition: $\AgV$}

Some $\GL_g(\Z)$-admissible decompositions of $\Ort$ have been studied in
detail in the reduction theory of positive definite quadratic forms (see \cite[Chap. 8]{NamT}
and the references there), most notably:
\begin{enumerate}[(i)]
\item The perfect cone decomposition (also known as the first Voronoi
decomposition);
\item The central cone decomposition;
\item The Voronoi decomposition (also known as the second Voronoi
decomposition or the L-type decomposition).
\end{enumerate}
Each of them plays a significant (and different) role in the theory of the
toroidal compactifications of the moduli space of principally
polarized abelian varieties (see \cite{Igu}, \cite{alex1}, \cite{SB}).

\begin{example}

In Figure \ref{VorFig} we illustrate a section of the
$3$-dimensional cone $\Omega_2^{\rm rt}$, where we represent just some of the infinite Voronoi
cones (which for $g=2$ coincide with the perfect cones and with the
central cones). For $g=2$, there is only one
$\GL_g(\Z)$-equivalence class of maximal dimensional cones, namely
the principal cone $\prin^0$ (see section \ref{first-Vor}). Therefore, all the maximal cones in the picture
will be identified in the quotient $A_g^{tr,V}$.

\begin{figure}[h]
\begin{center}
\scalebox{.5}{\epsfig{file=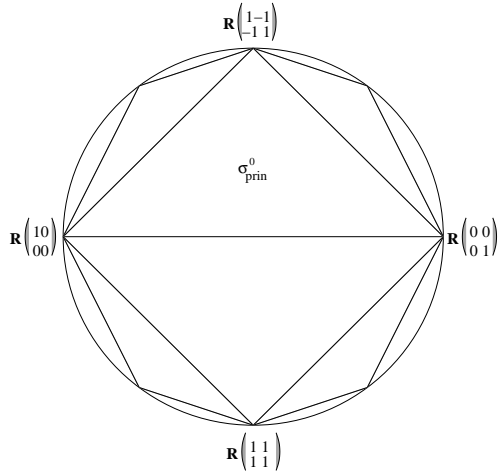}}
\end{center}
\caption{A section of $\Omega_2^{\rm rt}$ and its Voronoi
decomposition.}
\label{VorFig}
\end{figure}

\end{example}

Let us focus our attention on the Voronoi decomposition,
since it is the one that better fits in our setting.
It is based on the so-called {\it Dirichlet-Voronoi
polytope} $\Vor(Q)\subset \R^g$ associated to a positive semi-definite
quadratic form $Q\in \Ort$.
Recall (see for example \cite[Chap. 9]{NamT} or \cite[Chap. 3]{Val})
that if $Q\in \O$, then $\Vor(Q)$ is defined as
\begin{equation}\label{def-Vor}
\Vor(Q):=\{x\in \R^g \: :\: Q(x)\leq Q(v-x)
\text{ for all } v\in \Z^g\}.
\end{equation}
More generally, if $Q=h
\left(\begin{array}{cc}
Q' & 0 \\
0 &  0 \\
\end{array}\right)
h^t$ for some $h\in \GL_g(\Z)$ and some po\-si\-ti\-ve definite quadratic form $Q'$ in $\R^{g'}$, $0\leq g'\leq g$ (see Remark \ref{rat-qua}), then $\Vor(Q):=h^{-1}\Vor(Q')(h^{-1})^t\subset
h^{-1}\R^{g'}(h^{-1})^t$.
In particular, the smallest linear subspace containing $\Vor(Q)$
has dimension equal to the rank of $Q$.


\begin{defi}\label{Vor-dec}
The Voronoi decomposition $V=\{\sigma_{P}\}$ is the $\GL_g(\Z)$-admissible decomposition of
$\Ort$ whose open cones $\sigma_{P}^0:=\Int(\sigma_P)$ are parametrized by
Dirichlet-Voronoi polytopes $P\subset \R^g$ in the following way
$$\sigma_{P}^0:=\{Q\in \Ort\: : \: \Vor(Q)=P\}.$$
\end{defi}

\begin{remark}\label{parallelotopes}
The polytopes $P\subset \R^g$ that appear as Dirichlet-Voronoi polytopes of
quadratic forms in $\O$ are of a very special type: they are
{\it parallelohedra}, i.e. the set of translates of the form
$v+P$ for $v\in \Z^g$ form a face-to-face tiling of $\R^g$
(see for example \cite{McM2} or \cite[Chap. 3]{Val}).
Indeed, it has been conjectured by Voronoi (\cite{Vor})
that all the parallelohedra are affinely isomorphic to Dirichlet-Voronoi
polytopes (see \cite{DG} for
an account on the state of the conjecture).
\end{remark}

The natural action of $\GL_g(\Z)$ on the cones $\sigma_P^0$
corresponds to the natural action of $\GL_g(\Z)$ on the set of
all Dirichlet-Voronoi polytopes $P\subset \R^g$. We denote by $[P]$
(resp. $[\sigma^0_P]$) the equivalence class of $P$ (resp. $\sigma_P^0$)
under this action. We set also $C([P]):=[\sigma^0_P/\Stab(\sigma^0_P)]$.

\begin{defi}\label{AgV}
$\AgV$ is the stacky fan associated to the
Voronoi decomposition $V=\{\sigma_{P}\}$. Its cells are the $C([P])$'s as $[P]$ varies among the $\GL_g(\Z)$-equivalence classes
of Dirichlet-Voronoi polytopes in $\R^g$.
\end{defi}

In order to describe the maximal cells and codimension one cells of $\AgV$
(in analogy with Proposition \ref{prop-Mgt}), we need to introduce some
definitions. A Dirichlet-Voronoi polytope $P\subset \R^g$
is said to be \emph{primitive} if it is of dimension $g$
and the associated face-to-face tiling of $\R^g$ (see
Remark \ref{parallelotopes}) is such that at each
vertex of the tiling, the minimum number,
namely $g+1$, of translates of $P$ meet (see \cite[Sec. 2.2]{Val}).
A Dirichlet-Voronoi polytope $P\subset \R^g$
is said to be \emph{almost primitive} if it is of dimension $g$
and the associated face-to-face tiling of $\R^g$ (see
Remark \ref{parallelotopes}) is such that there is exactly one vertex,
modulo translations by $\Z^g$, where $g+2$ translates of $P$ meet
and at all the other vertices of the tiling only $g+1$ translates of
$P$ meet.

The properties of the following Proposition are the translation in our language of well-known
properties of the Voronoi decomposition (see the original
\cite{Vor} or \cite{Val} and the references there).
Unfortunately, the results we need are often stated in terms of the Delaunay decomposition,
which is the dual of the tiling of $\R^g$ by translates of the Dirichlet-Voronoi
polytope (see for example \cite[Chap. 9]{NamT} or \cite[Sec. 2.1]{Val}).
So, in our proof we will assume that the reader is familiar with the
Delaunay decomposition,
limiting ourselves to translate the above properties in terms of the Delaunay
decomposition and to explain how they follow from known results
about the Voronoi decomposition.

\begin{prop}\label{prop-AgV}
\noindent
\begin{enumerate}[(i)]
\item\label{AgV-pure}
The maximal cells of $\AgV$ are exactly those $C([P])$ such that
$P$ is primitive. $\AgV$ is of pure dimension $\binom{g+1}{2}$.
\item \label{AgV-conn} The codimension one cells of $\AgV$ are exactly
those of the form $C([P])$ such that $P$ is almost-primitive.
$\AgV$ is connected through codimension one.
\item \label{AgV-codim1}
Every codimension one cell of $\AgV$ lies in the closure
of one or two maximal cells.
\end{enumerate}
\end{prop}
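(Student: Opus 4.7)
The plan is to pass to the dual picture. For each $Q\in\Ort$, the Dirichlet--Voronoi tiling of $\R^g$ by translates of $\Vor(Q)$ has a dual cell decomposition $\Del(Q)$, the Delaunay decomposition, and the cone $\sigma_P^0$ coincides with the \emph{secondary cone} of the combinatorial type of $\Del(Q)$. Under this duality, $P$ is primitive exactly when every maximal cell of $\Del(Q)$ is a $g$-simplex, and $P$ is almost-primitive exactly when every maximal cell is a $g$-simplex except for one $\Z^g$-orbit of cells, each of which is a \emph{repartitioning polytope} with $g+2$ vertices. Since the existing literature (\cite{Vor}, \cite{Val}, \cite[Chap.~9]{NamT}) is phrased in terms of $\Del(Q)$, the whole argument will take place on that side and then be translated back.

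For part \eqref{AgV-pure}, I would use the classical fact from Voronoi's reduction theory that $\dim\sigma_P^0=\binom{g+1}{2}$, i.e.\ $\sigma_P^0$ is open in $\Ort$, if and only if the Delaunay decomposition associated to any $Q\in\sigma_P^0$ is simplicial. Translating via the duality above, this says $\sigma_P^0$ is top-dimensional iff $P$ is primitive. Since the Voronoi decomposition covers $\Ort$ by Definition \ref{decompo}(5), every cone in $V$ is a face of some top-dimensional $\sigma_P$, hence $\AgV$ is pure of dimension $\binom{g+1}{2}$ and its maximal cells are exactly the $C([P])$ with $P$ primitive.

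For part \eqref{AgV-conn}, the same dictionary identifies cones of codimension one in $V$ with Delaunay decompositions in which exactly one orbit of cells fails to be a simplex, and the offending cells must be minimal non-simplices, i.e.\ $(g+2)$-vertex repartitioning polytopes; these correspond precisely to almost-primitive $P$. Connectedness through codimension one will then follow from the standard fact, due to Voronoi, that any two primitive Delaunay decompositions in $\R^g$ are connected by a finite chain of \emph{bistellar (Voronoi) flips}, each of which is the passage from one simplicial subdivision of a repartitioning polytope to the other through the almost-primitive decomposition that subdivides all other cells simplicially but leaves that repartitioning polytope intact. Dualizing gives a chain of maximal cells $C([P_0]),\dots,C([P_r])$ with consecutive terms sharing a codimension one cell in their closures.

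For part \eqref{AgV-codim1}, I would observe that inside $\Ort$ an almost-primitive cone $\sigma_P^0$ is a wall between exactly two top-dimensional Voronoi cones $\sigma_{P_1}^0$ and $\sigma_{P_2}^0$, namely the ones obtained by the two possible simplicial refinements of the unique non-simplex Delaunay cell, all other (already simplicial) cells being kept fixed. Projecting to $\AgV$, the cell $C([P])$ therefore lies in the closure of at most two maximal cells, and in exactly one cell precisely when some element of $\GL_g(\Z)$ sends $\sigma_{P_1}$ to $\sigma_{P_2}$ (equivalently, $P_1$ and $P_2$ are $\GL_g(\Z)$-equivalent). The main obstacle is making the dictionary between primitive/almost-primitive parallelohedra on the one hand and simplicial/almost-simplicial Delaunay decompositions on the other completely rigorous, and citing the appropriate classical statements of Voronoi's theory (flip-connectedness of primitive $L$-types, and the wall structure of secondary cones) in a form that applies in all dimensions $g$.
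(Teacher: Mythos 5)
Your proposal is correct and follows essentially the same route as the paper: translate primitive/almost-primitive into the dual Delaunay language (triangulations, respectively exactly one repartitioning polytope), and then invoke the classical results of Voronoi's reduction theory (the Main Theorem in \cite[Thm. 2.5.1]{Val}, flip-connectedness of primitive $L$-types, and the wall structure giving bistellar neighbors) for all three parts. The paper's proof is just a more compressed version of this same argument, citing \cite{Vor} and \cite{Val} for the dictionary and the classical statements you identify as the remaining work.
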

\begin{proof}

The Dirichlet-Voronoi polytopes $P\subset \R^g$ that are primitive
correspond to Delaunay decompositions that are triangulations, i. e.
such that every Delaunay polytope is a simplex (see \cite[Sec. 3.2]{Val}).
The Dirichlet-Voronoi polytopes $P\subset \R^g$ that are almost-primitive
correspond to the Delaunay decompositions that  have
exactly one Delaunay repartitioning polytope, in the sense of
\cite[Sec. 2.4]{Val}, and all the other Delaunay polytopes are
simplices. Two maximal cells that have a common codimension one cell in their
closure are usually called bistellar neighbors (see \cite[Sec. 2.4]{Val}).
With this in mind, all the above properties follow from the (so-called)
\textit{Main Theorem of Voronoi's reduction theory} (see \cite{Vor} or
\cite[Thm. 2.5.1]{Val}).

\end{proof}


\subsection{Zonotopal Dirichlet-Voronoi polytopes: $\Agzon$}

Among all the Dirichlet-Voronoi polytopes, a remarkable subclass is represented by the
\emph{zonotopal} ones.
Recall (see \cite[Chap. 7]{Zie}) that a zonotope is a polytope that can be realized
as a Minkowski sum of segments, or equivalently, that can be obtained
as an affine projection of an hypercube.

\begin{remark}\label{Vorconj-zon}
Voronoi's conjecture has been proved for zonotopal parallelohedra
(see \cite{McM}, \cite{Erd}, \cite{DG2}, \cite{Val2}):
every zonotopal parallelohedron is affinely equivalent to
a zonotopal Dirichlet-Voronoi polytope.
Therefore, there is a bijection
$$
\left\{\begin{aligned}
& \text{Zonotopal parallelohedra}\\
& \text{in } \R^g \\ \end{aligned} \right\}_{/{\rm aff}}
\longleftrightarrow
\left\{\begin{aligned}
& \text{Zonotopal Dirichlet-Voronoi}\\
&\text{polytopes in }  \R^g \\
\end{aligned}\right\}_{/\GL_g(\Z)}
$$
\end{remark}

There is a close (and well-known) relation between zonotopal Dirichlet-Voronoi polytopes
$P\subset \R^g$ up to $\GL_g(\Z)$-action and regular matroids $M$
of rank at most $g$. We need to review this correspondence in
detail because it is crucial for the sequel of the paper and
also because we need to fix the notations we are going to use.
Consider first the following

\begin{constr}\label{constr-matrix}
Let $A\in M_{g,n}(\R)$ be a totally unimodular matrix of rank $r\leq g$.
Consider the linear map $f_{A^t}:\R^g\to \R^n$, $x\mapsto A^t\cdot x$,
where $A^t$ is the transpose of $A$. For any $n$-tuple
$\un{l}=(l_1,\ldots,l_n)\in \R_{>0}^n$, consider the
positive definite quadratic form $|| \cdot||_{\un{l}}$ on $\R^n$
given on $y=(y_1,\ldots,y_n)\in \R^n$ by
$$||y||_{\un{l}}:=l_1y_1^2+\ldots+l_ny_n^2,$$
and its pull-back $Q_{A,\un{l}}$ on $\R^g$ via $f_{A^t}$, i.e.
\begin{equation}\label{Q_A}
Q_{A,\un{l}}(x):=||A^t\cdot x||_{\un{l}},
\end{equation}
for $x\in\R^g$. Clearly $Q_{A,\un{l}}$ has rank equal to $r$ and belongs to
$\Ort$. As $\un{l}$ varies in $\R^n_{>0}$, the semi-positive
definite quadratic forms $Q_{A,\un{l}}$ form an open cone in $\Ort$
which we denote by $\sigma^0(A)$. Its closure in $\Ort$, denoted
by $\sigma(A)$, consists of the quadratic
forms $Q_{A, \un{l}}\in \Ort$, where $\un{l}$ varies in $\R_{\geq 0}^n$.
The faces of $\sigma(A)$ are easily seen to be of the form
$\sigma(A\setminus I)$ for some $I\subset \{1,\ldots,n\}$, where $A\setminus
I$ is the totally unimodular matrix obtained from $A$ by deleting the column vectors $v_i$ with $i\in I$.

Considering the column vectors $\left\{v_1,\ldots,v_n\right\}$ of $A$
as elements of $(\R^g)^*$, we define the following zonotope of $\R^g$:
\begin{equation}\label{Z_A}
Z_A:=\{x\in \R^g\, :\, -1/2\leq v_i(x)\leq 1/2 \text{ for } i=1,\cdots,n\}\subset \R^g.
\end{equation}
Its polar polytope (see \cite[Sec. 2.3]{Zie})
$Z_A^*\subset (\R^g)^*$ is given as a Minkowski sum:
\begin{equation}\label{dual-Z_A}
Z_A^*:=\left[-\frac{v_1}{2},+\frac{v_1}{2}\right]+\ldots+
\left[-\frac{v_n}{2},+\frac{v_n}{2}\right]\subset (\R^g)^*.
\end{equation}
Clearly the linear span of $Z_A$ has dimension $r$.

Finally, if $M$ is a regular matroid of rank $r(M)\leq g$,
write $M=M[A]$, where $A\in M_{g,n}(\R)$ is a totally unimodular matrix
of rank $r$ (see Theorem \ref{reg-thm}(i)). Note that if
$A=XBY$ for a matrix $X\in \GL_g(\Z)$ and a permutation
matrix $Y\in \GL_n(\Z)$, then $\sigma^0(A)=X\sigma^0(B)X^t$
and $Z_A= X \cdot Z_B$.
Therefore, according to Theorem \ref{reg-thm}(ii), the $\GL_g(\Z)$-equivalence
class of $\sigma^0(A)$, $\sigma(A)$ and of $Z_A$ depends only on the matroid $M$ and
therefore we will set $[\sigma^0(M)]:=[\sigma^0(A)]$, $[\sigma(M)]=[\sigma(A)]$
and $[Z_M]:=[Z_A]$. The matroid $M\setminus I=M[A\setminus I]$ for a subset
$I\subset E(M)=\{v_1,\cdots, v_n\}$ is called the {\it deletion} of $I$ from $M$
(see \cite[Pag. 22]{Oxl}).

\end{constr}

\begin{lemma}\label{cone-expl}
Let $A$ be as in \ref{constr-matrix}.
Then $Z_A$ is a Dirichlet-Voronoi polytope
whose associated cone is given by $\sigma^0(A)$, i.e. $\sigma_{Z_A}^0=\sigma^0(A)$.
\end{lemma}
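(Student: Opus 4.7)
The plan is to prove the stronger statement $\Vor(Q_{A,\un l}) = Z_A$ for every $\un l \in \R_{>0}^n$. Once established, this immediately yields the inclusion $\sigma^0(A) \subseteq \sigma^0_{Z_A}$, so $Z_A$ is in particular a Dirichlet-Voronoi polytope. The reverse inclusion $\sigma^0_{Z_A} \subseteq \sigma^0(A)$ will then follow because both are relatively open rational polyhedral cones in $\Ort$ meeting in a nonempty open set; together with the partition of $\Ort$ into Voronoi cones this forces equality.

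For the inclusion $Z_A \subseteq \Vor(Q_{A,\un l})$, I would unpack the Voronoi condition. The bilinear form associated to $Q_{A,\un l}$ is $B(x,y) = \sum_i l_i v_i(x) v_i(y)$, so $Q(x) \leq Q(v-x)$ is equivalent to
\[
\sum_{i=1}^n l_i\, v_i(v)\bigl(2 v_i(x) - v_i(v)\bigr) \leq 0 \quad \text{for all } v \in \Z^g.
\]
For $x \in Z_A$ we have $|v_i(x)| \leq 1/2$, and since $A$ has integer entries, $k := v_i(v) \in \Z$. The elementary inequality $k(2a - k) \leq |k|(1 - |k|) \leq 0$, valid for $k \in \Z$ and $|a| \leq 1/2$, then gives term-by-term non-positivity, so the whole sum is $\leq 0$.

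For the reverse inclusion $\Vor(Q_{A,\un l}) \subseteq Z_A$, total unimodularity must enter essentially. My approach is a volume/tiling argument: both $Z_A$ and $\Vor(Q_{A,\un l})$ are convex, centrally symmetric, and tile (the appropriate subspace of) $\R^g$ under $\Z^g$-translates; they therefore have equal Euclidean covolume, and combined with the previous inclusion, convexity forces $Z_A = \Vor(Q_{A,\un l})$. The essential input from TU is the saturation property $A^t(\Z^g) = \Z^n \cap A^t(\R^g)$, from which one derives that for any $x \in \R^g$ there exists $v \in \Z^g$ with $A^t(x - v) \in [-1/2,1/2]^n$, i.e.\ $x - v \in Z_A$. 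The rank-deficient case $r < g$ is reduced to the full-rank case by invoking Remark \ref{rat-qua} to write $Q_{A,\un l} = h\,\mathrm{diag}(Q', 0)\,h^t$ with $h \in \GL_g(\Z)$: both $Z_A$ and $\Vor(Q_{A, \un l})$ transform equivariantly under $h$ (the latter by definition, the former because $Z_A$ is invariant under translation by $\Null(Q_{A,\un l})$), so everything reduces to the full-rank case on the complementary $r$-dimensional subspace.

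The main obstacle is the tiling/volume step: the easy direction only uses integrality of $A$, while it is the much finer structure of TU matrices that guarantees $Z_A$ is in fact a fundamental domain for $\Z^g$ on the subspace of interest. An alternative, also relying on TU, would be to construct for each $x \notin Z_A$ an explicit $v \in \Z^g$ such that the Voronoi inequality fails, exploiting the combinatorial abundance of integer points granted by total unimodularity; in either approach, it is the reverse inclusion where all the work lies.
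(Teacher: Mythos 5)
Your overall plan --- prove $\Vor(Q_{A,\un{l}})=Z_A$ for every $\un{l}\in\R_{>0}^n$ and then deduce the statement about cones --- is the same as the paper's first step, and your computation establishing the easy inclusion $Z_A\subseteq \Vor(Q_{A,\un{l}})$ is correct. But each of the two remaining steps has a genuine gap. For the inclusion $\Vor(Q_{A,\un{l}})\subseteq Z_A$ you assert that the essential input from total unimodularity is the saturation property $A^t(\Z^g)=\Z^n\cap A^t(\R^g)$, ``from which one derives'' that every $x\in\R^g$ has a $\Z^g$-translate lying in $Z_A$. That derivation is false: for $g=1$, $n=2$, $A=(1\;\;2)$ the map $f_{A^t}$ is integral and saturated, yet $Z_A=[-1/4,1/4]$, its $\Z$-translates do not cover $\R$, and indeed $\Vor(Q_{A,\un{l}})=[-1/2,1/2]\not\subseteq Z_A$. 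So the covering/tiling claim is exactly where the full strength of total unimodularity (not mere primitivity) must enter, and your proposal leaves it unproved, explicitly labelling it ``the main obstacle'' and offering only an alternative sketch. This is the hard half of the identity $\Vor(Q_{A,\un{l}})=Z_A$; the paper obtains it by pulling back the Voronoi cell of the diagonal form $\|\cdot\|_{\un{l}}$ (the unit cube in $\R^n$) along $f_{A^t}$.

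The second gap is in your deduction of $\sigma^0_{Z_A}\subseteq\sigma^0(A)$. The fact that both are relatively open rational cones, that one contains the other, and that the open Voronoi cones partition $\Ort$ does not force equality: a relatively open subcone can be properly contained in a single open cell of a partition (think of $\{0<y<x\}$ inside $\{x>0,\,y>0\}$), and nothing in your argument excludes this. What must be shown is that \emph{every} $Q$ with $\Vor(Q)=Z_A$ is of the form $Q_{A,\un{l}}$, and this is where the paper does real work: it invokes the Erdahl--Ryshkov description of the extreme rays of $\sigma_{Z_A}$ (each ray is of the form $\sigma_{Z(A)_i}$ for an index $i$ with $v_i\neq 0$) and identifies each such ray with the ray $\sigma(A\setminus\{i\}^c)$ of $\sigma(A)$ using the already-proved inclusion applied to single columns; since the closed cone $\sigma_{Z_A}$ is generated by its rays, this yields $\sigma_{Z_A}\subseteq\sigma(A)$ and hence equality of the open cones. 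Your proposal contains no substitute for this step. By contrast, your reduction of the rank-deficient case $r<g$ to the full-rank case via Remark \ref{rat-qua} is fine and matches the paper's (equally brief) treatment.
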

\begin{proof}
Let us first show that
$\Vor(Q_{A,\un{l}})=Z_A$ for any $\un{l}\in \R_{>0}^n$, i.e. that
$Z_A$ is a Dirichlet-Voronoi polytope and that $\sigma^0(A)\subset \sigma_{Z_A}^0$.
Assume first that $A$ has maximal rank $r=g$ or, equivalently, that
$f_{A^t}:\R^g\to \R^n$ is injective.
By definitions (\ref{def-Vor}) and (\ref{Q_A}), we get that
\begin{equation*}\Vor(Q_{A,\un{l}})=\{x\in \R^g\,:\, ||f_{A^t}(x)||_{\un{l}}\leq
||f_{A^t}(\lambda - x)||_{\un{l}} \,\text{ for all } \lambda \in \Z^g\}.\tag{*}
\end{equation*}
The total unimodularity of $A$ and the injectivity of $f_{A^t}$
imply that the map $f_{A^t}:\R^g\to \R^n$
is integral and primitive, i.e. $f_{A^t}(x)\in \Z^n$ if and only if $x\in \Z^g$.
Therefore, from (*) we deduce that
\begin{equation*}
\Vor(Q_{A,\un{l}})=f_{A^t}^{-1}(\Vor(||\cdot ||_{\un{l}}).\tag{**}
\end{equation*}
Since $||\cdot||_{\un{l}}$ is a diagonal quadratic form on $\R^n$,
it is easily checked that
\begin{equation*}
\Vor(||\cdot||_{\un{l}})=\left[-\frac{e_1}{2},\frac{e_1}{2}\right]+\cdots
+ \left[-\frac{e_n}{2},\frac{e_n}{2}\right],\tag{***}
\end{equation*}
where $\{e_1,\cdots,e_n\}$ is the standard basis of $\R^n$. Combining
(**) and (***), and using the fact that $f_{A^t}(x)=(v_1(x),\cdots, v_n(x))$,
we conclude. The general case $r\leq g$ follows in a similar way
after replacing $\R^g$ with $\R^g/\Ker(f_{A^t})$. We leave the details
to the reader.

In order to conclude that $\sigma^0(A)=\sigma_{Z_A}^0$, it is enough
to show that the rays of $\sigma_{Z_A}$ are contained in $\sigma(A)$.
By translating the results of \cite[Sec. 3]{ER} into our notations,
we deduce that the rays of $\sigma_{Z_A}$ are all of the form
$\sigma_{Z(A)_i}$ for the indices $i$ such that $v_i\neq 0$, where
$$Z(A)_i:=Z(A)\bigcap_{j\neq i}\{v_j^*=0\}.
$$
By what we already proved, we have the inclusion $\sigma(v_i):=
\sigma(A\setminus \{i\}^c)\subset
\sigma_{Z(A)_i}$, where $\{i\}^c:=\{1,\cdots,n\}\setminus \{i\}$.
Since both the cones are one dimensional, we deduce that
$\sigma(A\setminus \{i\}^c)=\sigma_{Z(A)_i}$, which shows that
all the rays of $\sigma_{Z_A}$ are also rays of $\sigma(A)$.

\end{proof}

\begin{thm}\label{mat-zon}
\noindent
\begin{enumerate}[(i)]
 \item \label{mat-zon1}Given a regular matroid $M$ of rank $r(M)\leq g$, $[Z_M]$ is the
$\GL_g(\Z)$-equivalence class of a zonotopal Dirichlet-Voronoi
polytope and every such class arises in this way.
\item  \label{mat-zon2} If $M_1$ and $M_2$ are two regular matroids, then $[Z_{M_1}]=
[Z_{M_2}]$ if and only if $[\sigma(M_1)]=[\sigma(M_2)]$ if and only if
$\widetilde{M_1}=\widetilde{M_2}$.
\item \label{mat-zon3} If $M$ is simple, then any representative $\sigma(M)$ in $[\sigma(M)]$
is a simplicial cone of dimension $\#E(M)$ whose faces are of the form
$\sigma(M\setminus I)\in [\sigma(M\setminus I)]$ for some uniquely determined
$I\subset E(M)$.
\end{enumerate}
\end{thm}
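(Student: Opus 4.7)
The proof naturally splits along the three parts. The overall strategy is to exploit the explicit description of $\sigma(A)$ as the positive linear span of the rank-one symmetric forms $v_iv_i^T$ corresponding to the columns $v_i$ of a totally unimodular representation $A$, and to combine this with Lemma \ref{cone-expl}, Theorem \ref{reg-thm}, and the classical facts on zonotopal parallelohedra recalled in Remark \ref{Vorconj-zon}.

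For part \eqref{mat-zon1}, one implication is essentially built into Construction \ref{constr-matrix}: the explicit formula \eqref{dual-Z_A} presents $Z_A$ via a Minkowski sum of segments and therefore places it in the zonotope class, while Lemma \ref{cone-expl} simultaneously shows $Z_A$ is a Dirichlet-Voronoi polytope. The reverse implication---every class $[P]$ of zonotopal Dirichlet-Voronoi polytopes arises as $[Z_M]$ for some regular $M$---is where I invoke the classical structural result, due to McMullen and Erdahl (cited in Remark \ref{Vorconj-zon}), that a zonotope is a parallelohedron if and only if it is generated by a totally unimodular system of vectors. Combined with the bijection of Remark \ref{Vorconj-zon} between zonotopal parallelohedra and zonotopal Dirichlet-Voronoi polytopes modulo $\GL_g(\Z)$, and with Theorem \ref{reg-thm}(i), this yields surjectivity onto regular matroids.

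For part \eqref{mat-zon2}, the equivalence $[Z_{M_1}] = [Z_{M_2}] \Leftrightarrow [\sigma(M_1)] = [\sigma(M_2)]$ follows immediately from Lemma \ref{cone-expl} together with Definition \ref{Vor-dec}, since the open cone $\sigma_P^0$ depends bijectively on $P$. The equivalence with $\widetilde{M_1} = \widetilde{M_2}$ has two halves. First, passing from $M$ to $\widetilde{M}$ does not change $\sigma(M)$: deleting a loop removes a column $v_i = 0$ that contributes $l_iv_iv_i^T = 0$, and replacing a parallel pair $v_i = \pm v_j$ by a single representative corresponds to the surjective substitution $l_i + l_j \rightsquigarrow l'$ onto $\R_{\geq 0}$. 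Second, suppose $\widetilde{M_1} \neq \widetilde{M_2}$ as simple regular matroids; then by Theorem \ref{reg-thm}(ii) no element of $\GL_g(\Z)$, composed with column permutations and sign changes, can carry a totally unimodular representation of $\widetilde{M_1}$ onto one of $\widetilde{M_2}$. Using part \eqref{mat-zon3}, which identifies the rays of $\sigma(\widetilde{M})$ with the columns of any such representation up to sign, this forces $[\sigma(\widetilde{M_1})] \neq [\sigma(\widetilde{M_2})]$.

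For part \eqref{mat-zon3}, the face description is direct from Construction \ref{constr-matrix}: the faces of $\sigma(A) = \R_{\geq 0}\langle v_1v_1^T, \ldots, v_nv_n^T\rangle$ are obtained by setting subsets of the $l_i$ to zero, yielding exactly the cones $\sigma(A \setminus I)$. The real content is that $\sigma(M)$ is simplicial of dimension $n = \#E(M)$, equivalently that the symmetric forms $\{v_iv_i^T\}_{i=1}^n$ are linearly independent in $\R^{\binom{g+1}{2}}$. This linear independence is the main obstacle of the proof. For simple regular $M$ I expect to establish it either combinatorially---exploiting the fact that totally unimodular matrices admit a standard form $A = [I_r \,|\, B]$ with $B \in \{-1,0,1\}^{r\times (n-r)}$, and analyzing a putative dependence $\sum c_i v_iv_i^T = 0$ by testing against the dual basis---or geometrically via the facet structure of the zonotope $Z_M$, which in the simple case has exactly $2n$ facets arranged in $n$ pairs perpendicular to the directions $v_i$, forcing $\sigma_{Z_M}^0 = \sigma^0(M)$ to carry $n$ rays. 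Once linear independence is in hand, uniqueness of $I$ in the face description is immediate since distinct index subsets then span distinct subcones.
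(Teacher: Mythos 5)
The decisive issue is part (\ref{mat-zon3}). You correctly isolate the crux --- that $\sigma(M)$ is simplicial of dimension $\#E(M)$, i.e.\ that the rank-one forms $\{v_iv_i^t\}_{i=1}^n$ attached to the columns of a totally unimodular representation of a simple regular matroid are linearly independent --- but you do not prove it: you offer two unexecuted sketches (``I expect to establish it either\dots or\dots''). Neither sketch, as stated, closes the gap. The combinatorial one (standard form $[I_r\,|\,B]$, testing a dependence against the dual basis) reduces, after using total unimodularity to fix signs, to showing that the relations $\sum_{j:\{k,l\}\subseteq \operatorname{supp}(v_j)}c_j=0$ for all pairs $k\neq l$ force all $c_j=0$, and that implication still requires a genuine argument exploiting unimodularity (it is false for general pairwise non-parallel configurations, e.g.\ $e_1,e_2,e_1+e_2,e_1-e_2$ in $\R^2$, where $2e_1e_1^t+2e_2e_2^t-(e_1+e_2)(e_1+e_2)^t-(e_1-e_2)(e_1-e_2)^t=0$; note this configuration is not totally unimodular). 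The geometric one is logically insufficient: knowing that $Z_M$ has $2n$ facets and hence that $\sigma_{Z_M}$ has (at most) $n$ extreme rays does not give $\dim\sigma_{Z_M}=n$ nor simpliciality --- a pointed cone with $n$ rays can have any dimension up to $n$, and the missing inequality $\dim\geq n$ is exactly the linear independence you are trying to prove, so this route is circular. For comparison, the paper does not reprove this fact either: it quotes Erdahl--Ryshkov \cite[Thm.~4.1]{ER} for the simpliciality of $\sigma(A)$ (and \cite[Sec.~3]{ER}, already used inside Lemma \ref{cone-expl}, for the identification of the rays), and then the dimension count and the uniqueness of $I$ follow as in your last sentence. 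So either cite that theorem or actually carry out an independence proof; as it stands, the central claim of (\ref{mat-zon3}) is asserted, not established, and since your argument for the hard direction of (\ref{mat-zon2}) leans on (\ref{mat-zon3}), the gap propagates there.

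Setting that aside, your route through (\ref{mat-zon2}) is genuinely different from the paper's and is attractive: the paper shows that for simple regular $M$ the class $[Z_M]$ determines the intersection poset of the hyperplane arrangement $\A_M$, hence the lattice of flats $\L(M)$, hence $M$; you instead argue that a $\GL_g(\Z)$-equivalence of cones must permute the extreme rays, hence carry columns to columns up to scalars, hence preserve the matroid. That works once (\ref{mat-zon3}) is available, with one small point to make explicit: from $h(v_jv_j^t)h^t$ spanning the same ray as $v_{\pi(j)}'v_{\pi(j)}'^t$ you get $hv_j=\lambda_j v_{\pi(j)}'$ with $\lambda_j\neq 0$ (and if you want $\lambda_j=\pm1$, use that nonzero columns of a totally unimodular matrix are primitive integral vectors and that $h,h^{-1}$ are integral); for the matroid conclusion proportionality already suffices, so Theorem \ref{reg-thm}(ii) is not really needed there. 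Parts (\ref{mat-zon1}) and the rest of (\ref{mat-zon2}) (including $\sigma(M)=\sigma(\widetilde M)$ via merging parallel columns, versus the paper's $Z_A=Z_{\widetilde A}$) match the paper's approach, both resting on the cited Shephard--McMullen/Erdahl results for surjectivity.
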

\begin{proof}
The first assertion of (i) follows from the previous Lemma \ref{cone-expl}
together with the fact that each representative $Z_A\in [Z_M]$ is zonotopal
by definition (see \ref{constr-matrix}). The second assertion is a well-known
result of Shephard and McMullen (\cite{She}, \cite{McM} or
also \cite[Thm. 1]{DG2}).

Consider part (ii). By definition \ref{Vor-dec} and what remarked shortly after,
$[\sigma(M_1)]$$=[\sigma(M_2)]$ if and only if $[Z_{M_1}]=[Z_{M_2}]$.
Let us prove that $[Z_M]=[Z_{\w{M}}]$.
Write $M=M[A]$ as in \ref{constr-matrix}. From Definitions
\ref{rep-mat} and \ref{simplifi-mat}, it is straightforward to see
that $\w{M}=M[\w{A}]$, where $\w{A}$ is the totally unimodular
matrix obtained from $A$ by deleting the zero columns and, for each set
$S$ of proportional columns, deleting all but one distinguished column
of $S$. From the definition (\ref{Z_A}), it follows easily that
$Z_A=Z_{\w{A}}$, which proves that $[Z_M]=[Z_{\w{M}}]$.

To conclude part (ii),
it remains to prove that if $M_1$ and $M_2$ are simple regular matroids
such that $[Z_{M_1}]=[Z_{M_2}]$, then $M_1=M_2$.
We are going to use the poset of flats $\L(M)$ of a matroid $M$ (see
\cite[Sec. 1.7]{Oxl}). In the special case (which will be our case)
where $M=M[A]$ for some matrix
$A\in M_{g,n}(F)$ over some field $F$, whose column vectors
are denoted as usual by
$\{v_1,\cdots,v_n\}$, a flat (see \cite[Sec. 1.4]{Oxl})
is a subset $S\subset E(M)=\{1,\cdots,n\}$ such that
$${\rm span}(v_i\, : \, i\in S)\subsetneq {\rm span}(v_k, v_i\,:\, i\in S),
$$
for any $k\not\in S$. $\L(M)$ is the poset of flats endowed with the natural
inclusion. It turns out that (see \cite[Pag. 58]{Oxl}) for two matroids $M_1$ and $M_2$,
we have
\begin{equation*}
\L(M_1)\cong \L(M_2) \Leftrightarrow \w{M_1}=\w{M_2}. \tag{*}
\end{equation*}

Moreover, in the case where $M$ is a regular and simple matroid, $\L(M)$ is determined
by the $GL_g(\Z)$-equivalence class $[Z_M]$. Indeed, writing
$M=M[A]$ as in \ref{constr-matrix}, $Z_M$ determines, up to the natural action
of $GL_g(\Z)$, a central arrangement $\A_M$ of non-trivial and
pairwise distinct
hyperplanes in $(\R^g)^*$, namely those given by $H_i:=\{v_i=0\}$ for $i=1,\cdots, n$.
Denote by $\L(\A_M)$ the intersection poset of $\A_M$, i.e.
the poset of linear subspaces of $(\R^g)^*$ that are
intersections of some of the hyperplanes $H_i$, ordered by inclusion.
Clearly $\L(\A_M)$ depends only on the $GL_g(\Z)$-equivalence class $[Z_M]$.
It is easy to check that the map
\begin{equation*}
 \begin{aligned}
\L(M) & \longrightarrow \L(\A_M)^{{\rm opp}}\\
S & \mapsto \bigcap_{i\in S} H_i,
 \end{aligned}\tag{**}
\end{equation*}
is an isomorphism of posets, where $\L(\A_M)^{{\rm opp}}$
denotes the opposite poset of $\L(\A_M)$.
Now we can conclude the proof of part (ii). Indeed, if $M_1$ and $M_2$ are regular
and simple matroids such that $[Z_{M_1}]=[Z_{M_2}]$ then
$\L(\A_{M_1})\cong \L(\A_{M_2})$ which implies that $\L(M_1)\cong \L(M_2)$
by (**) and hence $M_1=M_2$ by (*).

Finally consider part (iii). Write $M=M[A]$ as in \ref{constr-matrix}
and consider the representative $\sigma(A)\in [\sigma(M)]$.
From \cite[Thm. 4.1]{ER}, we known that $\sigma(A)$ is simplicial.
We have already observed in \ref{constr-matrix} that all the faces of
$\sigma(A)$ are of the form
$\sigma(A\setminus I)$ for $I\subset E(M)=\{v_1,\cdots,v_n\}$
and that $\sigma(A\setminus I)\in [\sigma(M\setminus I)]$
by definition of deletion of $I$ from $M$.
In particular, the rays of $\sigma(A)$
are all of the form $\sigma(v_i):=\sigma(A\setminus \{v_i\}^c)$ for some
$v_i\in E(M)$, where $\{v_i\}^c:=E(M)\setminus \{v_i\}$.
The hypothesis that $M$ is simple (see \ref{simple-mat}) is equivalent to the fact that
the matrix $A$ has no zero columns and no parallel columns.
This implies that all the faces $\sigma(v_i)$ are $1$-dimensional and pairwise
distinct. Since $\sigma(A)$ is a simplicial cone,
its dimension is equal to the number of rays, i.e. to $n=\#E(M)$.
The fact that each face of $\sigma(A)$ is of the form $\sigma(A\setminus I)$
for a unique $I\subset E(M)$ follows from the fact  that in a simplicial
cone  each face is uniquely determined by the rays contained in it.

\end{proof}

From Theorem \ref{mat-zon}, it follows
that the class of all open Voronoi cones $\sigma_Z^0$
such that $Z\subset \R^g$ is a zonotopal Dirichlet-Voronoi polytope is stable
under the action of $\GL_g(\Z)$ and under the operation of taking faces
of the closures $\sigma_{Z}=\ov{\sigma_{Z}^0}$. Therefore the collection
of zonotopal Voronoi cones, i.e.
$$\Zon:=\{\sigma_{Z}\subset \Ort \: : \: Z\subset \R^g \text{ is zonotope}\},$$
is a $\GL_g(\Z)$-admissible decomposition of a closed subcone
of $\Ort$, i.e. $\Zon$ satisfies all the properties of Definition
\ref{decompo}  except the last one. Therefore we can give the following

\begin{defi}\label{Zon}
$\Agzon$ is the stacky subfan of $\AgV$ whose cells are of the form
$C([Z])$, where $[Z]$ varies among the $\GL_g(\Z)$-equivalence classes of
zonotopal Dirichlet-Voronoi polytopes in $\R^g$.
\end{defi}
$\Agzon$ has dimension $\binom{g+1}{2}$ but it is not pure-dimensional
if $g\geq 4$ (see Example \ref{small-dim} or \cite{DV} for the list of
maximal zonotopal cells for small values of $g$).
There is indeed only one zonotopal cell of maximal dimension
$\binom{g+1}{2}$, namely the one corresponding to the principal
cone (see section \ref{first-Vor} below).
Using the notations of \ref{constr-matrix}, given a regular
matroid $M$ of rank at most $g$, we set $C(M):=C([Z_M])$.
From Theorem \ref{mat-zon}, we deduce the following useful

\begin{cor}\label{faces-zon}
The cells of $\Agzon$ are of the form $C(M)$, where $M$ is a
simple regular matroid of rank at most $g$.
\end{cor}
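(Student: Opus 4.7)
The plan is to derive this corollary directly from Theorem \ref{mat-zon}, since the main content has already been packaged there; what remains is essentially a bookkeeping argument about how to pick canonical matroid representatives.

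First I would unwind the definitions: by Definition \ref{Zon}, a cell of $\Agzon$ is by construction of the form $C([Z])$ for $[Z]$ a $\GL_g(\Z)$-equivalence class of a zonotopal Dirichlet-Voronoi polytope in $\R^g$. By Theorem \ref{mat-zon}\eqref{mat-zon1}, every such class is of the form $[Z_M]$ for some regular matroid $M$ of rank at most $g$, so each cell can be written as $C(M) = C([Z_M])$ for some such $M$.

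Next I would use Theorem \ref{mat-zon}\eqref{mat-zon2} to normalize the matroid. Given any regular $M$ with $r(M)\leq g$, part \eqref{mat-zon2} of that theorem yields $[Z_M] = [Z_{\widetilde M}]$, hence $C(M) = C(\widetilde M)$. The simplification $\widetilde M$ is obtained by discarding loops and all-but-one element from each parallel class, and these operations preserve the rank, so $r(\widetilde M) = r(M) \leq g$ and $\widetilde M$ is indeed a simple regular matroid of rank at most $g$. This shows that every cell of $\Agzon$ is of the form $C(M)$ with $M$ simple regular of rank $\leq g$.

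Finally I would note that the assignment $M \mapsto C(M)$ restricted to simple regular matroids is injective: if $M_1, M_2$ are both simple and $C(M_1) = C(M_2)$, then by Theorem \ref{mat-zon}\eqref{mat-zon2} we have $\widetilde{M_1} = \widetilde{M_2}$, and since $M_i = \widetilde{M_i}$ by simplicity, $M_1 = M_2$. There is no real obstacle here; the only point that requires a line of verification is that simplification does not change the rank, which is immediate from the definitions of loop and parallel class (Definition \ref{simple-mat}).
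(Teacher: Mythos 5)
Your proposal is correct and follows exactly the route the paper intends: the corollary is deduced directly from Theorem \ref{mat-zon}, using part (\ref{mat-zon1}) to write each zonotopal cell as $C(M)$ for a regular matroid of rank at most $g$ and part (\ref{mat-zon2}) to replace $M$ by its simplification $\widetilde{M}$ (which indeed has the same rank), with uniqueness of the simple representative also coming from (\ref{mat-zon2}). No gaps.
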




We want to conclude this section on zonotopal Dirichlet-Voronoi polytopes
(and hence on zonotopal parallelohedra by remark \ref{Vorconj-zon})
by mentioning the following
\begin{remark}
Zonotopal parallelohedra $Z\subset \R^g$ are also closely related to
other geometric-combinatorial objects:
\begin{enumerate}[(i)]\label{Other-zon}
\item Lattice dicings of $\R^g$
(see \cite{ER});
\item Venkov arrangements of hyperplanes of $\R^g$
(see \cite{Erd});
\item Regular oriented matroids of rank at most $g$, up to reorientation
(see \\ \cite[Sec. 2.2, 6.9]{or-mat}).
\end{enumerate}
\end{remark}

\section{The tropical Torelli map}
\label{trop-tor}

\subsection{Construction of the tropical Torelli map $\tgt$}
\label{tgt}

We begin by defining the Jacobian of a tropical curve.

\begin{defi}\label{Jac}
Let $C=(\Gamma,w,l)$ be a tropical curve of genus $g$ and total weight
$|w|$. The {\it Jacobian} $\Jac(C)$ of $C$ is the tropical abelian variety
of dimension $g$ given by the real torus $(H_1(\Gamma,\R)\oplus \R^{|w|})/(H_1(\Gamma,\Z)\oplus\Z^{|w|})$ together with the semi-positive quadratic form $Q_{C}=Q_{(\Gamma,w,l)}$ which vanishes identically on $\R^{|w|}$ and is given on
$H_1(\Gamma,\R)$ as
\begin{equation}\label{def-quad}
Q_{C}\left(\sum_{e\in E(\Gamma)}\alpha_e \cdot e\right)=\sum_{e\in E(\Gamma)}
\alpha_e^2\cdot l(e).
\end{equation}
\end{defi}

\begin{remark}\label{rank=weight}
Note that the above definition is independent of the orientation
chosen to define $H_1(\Gamma,\Z)$. Moreover, after identifying
the lattice $H_1(\Gamma,\Z)\oplus\Z^{|w|}$ with $\Z^g$
(which amount to chose a basis of $H_1(\Gamma,\Z)$), we can
(and will) regard the arithmetic equivalence class of
$Q_{C}$ as an element of $\Ort$.
\end{remark}

\begin{remark}\label{comp-MK3}
The above definition of Jacobian is a generalization of the definition
of Mikhalkin-Zharkov (see \cite[Sec. 6]{MZ}).
More precisely, the Jacobian of a tropical curve of total weight zero
in our sense is the same as the Jacobian of Mikhalkin-Zharkov.
\end{remark}

\begin{example} \label{Ex-Peterson}
In Figure \ref{peterson} below,  the so-called Peterson graph is regarded as a tropical curve $C$ of genus $6$ with  identically zero weight  function
and with length function $l(e_i):=l_i\in \R_{> 0}, i=1,\dots, 15$.

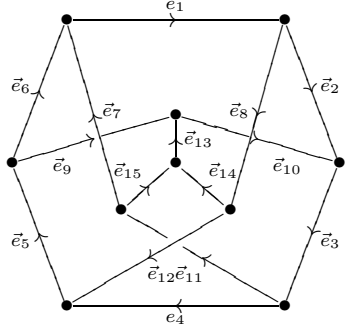
\begin{figure}[h]
\hspace{1cm}
\xymatrix@C=1.2pc@R=1pc{
&  *{\bullet} \ar@{-}[rrrr] |-{\SelectTips{cm}{}\object@{>}}^{\vec{e}_1} \ar@{-}[dddl] |-{\SelectTips{cm}{}\object@{<}}_{\vec{e}_6} \ar@{-}[ddddr] |-{\SelectTips{cm}{}\object@{<}}^{\vec{e}_7} &&&& *{\bullet} \ar@{-}[dddr] |-{\SelectTips{cm}{}\object@{>}}^{\vec{e}_2}\ar@{-}[ddddl]|!{[ddd];[lll]}\hole |-{\SelectTips{cm}{}\object@{>}}_{\vec{e}_8} & \\
&&&&&&\\
&  & & *{\bullet} \ar@{-}[d] |-{\SelectTips{cm}{}\object@{<}}^{\vec{e}_{13}} \ar@{-}[dlll]^>>>>>>{\vec{e}_9}|!{[d];[lll]}\hole |-{\SelectTips{cm}{}\object@{<}} \ar@{-}[drrr]|!{[d];[rrr]}\hole |-{\SelectTips{cm}{}\object@{>}}_>>>>>>{\vec{e}_{10}}&&&\\
*{\bullet} \ar@{-}[dddr] |-{\SelectTips{cm}{}\object@{<}}_{\vec{e}_5} &&  &*{\bullet} \ar@{-}[dl] |-{\SelectTips{cm}{}\object@{<}}_{\vec{e}_{15}} \ar@{-}[dr] |-{\SelectTips{cm}{}\object@{<}}^{\vec{e}_{14}}& && *{\bullet} \ar@{-}[dddl] |-{\SelectTips{cm}{}\object@{>}}^{\vec{e}_3} \\
&&*{\bullet} \ar@{-}[ddrrr]|!{[d];[rrr]}\hole |-{\SelectTips{cm}{}\object@{<}}_{\vec{e}_{11}}&&*{\bullet} \ar@{-}[ddlll] |-{\SelectTips{cm}{}\object@{>}}^{\vec{e}_{12}}&&\\
&&&&&&\\
& *{\bullet} \ar@{-}[rrrr] |-{\SelectTips{cm}{}\object@{<}}_{e_4} &&&&*{\bullet}
}
\caption{The Peterson graph $\Gamma$ endowed with an orientation.}\label{peterson}
\end{figure}

Fix an orientation of the edges as shown in the figure and 
consider the basis $B$ for the space $H_1(\Gamma,\R)=\R^6$ formed by the cycles $C_1,\dots,C_6$, where $C_1=\{\vec{e}_1,\vec{e}_2,\vec{e}_3,\vec{e}_4,\vec{e}_5,\vec{e}_6\}$, $C_2=\{\vec{e}_1,\vec{e}_2,\vec{e}_3,\vec{e}_{11},\vec{e}_7\}$,
$C_3=\{\vec{e}_1,\vec{e}_8,\vec{e}_{12},\vec{e}_5,\vec{e}_6\}$, $C_4=\{\vec{e}_3, $ $\vec{e}_{11},\vec{e}_{15},\vec{e}_{13},\vec{e}_{10}\}$,
$C_5=\{\vec{e}_5,\vec{e}_9,-\vec{e}_{13},-\vec{e}_{14},\vec{e}_{12}\}$ and $C_6=\{\vec{e}_1,\vec{e}_8,\vec{e}_{14},$ $-\vec{e}_{15},\vec{e}_7\}$.
Then the tropical Jacobian $J(C)$ of $C$ is the real torus $H_1(\Gamma,\R)/H_1(\Gamma,\Z)=\R^6/\Z^6$ endowed with the positive definite quadratic form $Q_C$ 
which is represented in the basis $B$ by the following matrix: 

\hspace{0.5cm}

\tiny{
%
$$\left(\begin{smallmatrix}
\sum_{i=1}^6 l_i & \frac{l_1+l_2+l_3}{2} & \frac{l_1+l_5+l_6}{2} & \frac{l_3}{2}  & \frac{l_5}{2} & \frac{l_1}{2} \\
&&&&&\\
\frac{l_1+l_2+l_3}{2} & l_1+l_2+l_3+l_{11}+l_7 & \frac{l_1}{2} & \frac{l_3+l_{11}}{2}  & 0 & \frac{l_1+l_7}{2}\\
&&&&&\\
\frac{l_1+l_5+l_6}{2} & \frac{l_1}{2} & l_1+l_5+l_6+l_8+l_{12} & 0  & \frac{l_5+l_{12}}{2} & \frac{l_1+l_8}{2}  \\
&&&&&\\
\frac{l_3}{2} & \frac{l_3+l_{11}}{2} & 0 & l_3+l_{10}+l_{11}+l_{13}+l_{15} &   \frac{-l_{13}}{2} & \frac{-l_{15}}{2}\\
&&&&&\\
\frac{l_5}{2} &0& \frac{l_5+l_{12}}{2} & \frac{-l_{13}}{2}  &    l_5+l_{9}+l_{12}+l_{13}+l_{14}&\frac{-l_{14}}{2}  \\
&&&&&\\
\frac{l_1}{2} & \frac{l_1+l_7}{2}& \frac{l_1+l_8}{2}&\frac{-l_{15}}{2}& \frac{-l_{14}}{2}&l_1+l_7+l_8+l_{14}+l_{15}
&&&&&\\
\end{smallmatrix}\right) $$
 }

\end{example}

Consider now the map (called tropical Torelli)
$$\begin{aligned}
\tgt: \Mgt & \to \AgV \\
C & \mapsto \Jac(C).
\end{aligned}$$

\begin{thm}\label{tr-map}
The above map $\tgt: \Mgt\to \AgV$ is a map of stacky fans.
\end{thm}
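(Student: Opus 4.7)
The plan is to work with one combinatorial type $(\Gamma,w)$ at a time and to identify, by an explicit computation, the cell of $\AgV$ that receives the image of $C(\Gamma,w)$, together with an integral linear map inducing the restriction of $\tgt$ to that cell. Fix $(\Gamma,w)$ of genus $g$. Choose an orientation of $\Gamma$ and a basis $\gamma_1,\dots,\gamma_{g(\Gamma)}$ of $H_1(\Gamma,\Z)\subset C_1(\Gamma,\Z)=\Z^{E(\Gamma)}$; writing $\gamma_i=\sum_j a_{ij}\,e_j$ produces a matrix $A=(a_{ij})\in M_{g(\Gamma),|E(\Gamma)|}(\Z)$, which is totally unimodular because $H_1(\Gamma,\Z)$ is a saturated subgroup of $\Z^{E(\Gamma)}$. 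After completing $\gamma_1,\dots,\gamma_{g(\Gamma)}$ to an integral basis of $H_1(\Gamma,\Z)\oplus\Z^{|w|}\cong\Z^g$ by the standard generators of $\Z^{|w|}$, augment $A$ by $|w|$ zero rows to view it in $M_{g,|E(\Gamma)|}(\Z)$. A direct expansion of~\eqref{def-quad} gives, for $x=\sum_i x_i\gamma_i\in H_1(\Gamma,\R)\oplus\R^{|w|}$, the identity
\[
Q_{(\Gamma,w,l)}(x)=\sum_{j\in E(\Gamma)} l(e_j)\,(A^tx)_j^{\,2}=Q_{A,l}(x),
\]
in the notation of Construction~\ref{constr-matrix}. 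Hence, choosing this basis once and for all, the lift of $\tgt|_{C(\Gamma,w)}$ to $\R^{|E(\Gamma)|}\to \R^{\binom{g+1}{2}}$ is the assignment $l\mapsto Q_{A,l}$, whose entries in the canonical bases of $\R^{\binom{g+1}{2}}$ are the integers $a_{ij}a_{kj}$. In particular this lift is integral linear and continuous, and a change of basis of $H_1(\Gamma,\Z)\oplus\Z^{|w|}$ changes $Q_{A,l}$ by arithmetic equivalence, so the quotient map $\tgt:C(\Gamma,w)\to \Agt$ is well-defined and continuous.

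Next I would identify the target cell. The vector matroid $M[A]$ has for circuits the minimally dependent collections of columns, i.e.\ the minimal subsets $S\subset E(\Gamma)$ supporting a nonzero element of the row-orthogonal complement of~$A$. Since that orthogonal complement is $H_1(\Gamma,\R)^{\perp}=\operatorname{Im}(\delta)$, the cut space, the circuits of $M[A]$ are precisely the bonds of $\Gamma$, so $M[A]=M^*(\Gamma)$, the cographic matroid of $\Gamma$. By Lemma~\ref{cone-expl}, $\sigma^0(A)=\sigma^0_{Z_A}$ is the interior of a single Voronoi cone, and by Theorem~\ref{mat-zon} its $\GL_g(\Z)$-equivalence class depends only on the simple matroid $\widetilde{M^*(\Gamma)}=M^*(\Gamma^3)$ (Proposition~\ref{simple-mat-graph}). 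Thus the image $\tgt(C(\Gamma,w))$ is contained in the single zonotopal cell
\[
C\bigl(M^*(\Gamma)\bigr)=C\bigl(M^*(\Gamma^3)\bigr)\subset \Agzon\subset\AgV,
\]
verifying condition~\eqref{map1} of Definition~\ref{maps}.

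Finally I would make condition~\eqref{map2} explicit. By Theorem~\ref{mat-zon}(iii) applied to the simple matroid $M^*(\Gamma^3)$, a representative cone $\sigma(M^*(\Gamma^3))$ is simplicial of dimension $|E(\Gamma^3)|$, with rays $\sigma(f)$ indexed by the edges $f\in E(\Gamma^3)$. Under the correspondence $\Gamma\rightsquigarrow\Gamma^3$ (contracting bridges and all-but-one edge in each $C1$-set, Definition~\ref{3-conn}), the column of $A$ attached to a bridge is zero while the columns attached to edges of a common $C1$-set coincide up to sign; consequently the map $l\mapsto Q_{A,l}$ factors as
\[
\R^{|E(\Gamma)|}\xrightarrow{\;L\;} \R^{|E(\Gamma^3)|}\ \hookrightarrow\ \langle \sigma(M^*(\Gamma^3))\rangle,
\]
where $L$ is the integral linear surjection that sums the $l$-coordinates within each $C1$-set and discards the bridges. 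This is the integral linear map $L_{k,j}$ required in Definition~\ref{maps}\eqref{map2}; its image of $\R^{|E(\Gamma)|}_{>0}$ lands in $\sigma^0(M^*(\Gamma))$ as required.

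The only delicate step, and the one I would single out as the main point of the argument, is the identification $M[A]=M^*(\Gamma)$ combined with Lemma~\ref{cone-expl}: this is what guarantees that the continuous image of the entire cell $C(\Gamma,w)$ does not straddle several Voronoi cones but lies inside exactly one cell of the stacky fan $\AgV$. Everything else is a bookkeeping exercise in the identifications provided by Theorem~\ref{mat-zon} and Proposition~\ref{simple-mat-graph}; in particular, no extra verification of compatibility with the automorphism groups is necessary, since $\Jac(C)$ is manifestly an isomorphism invariant of the tropical curve~$C$, so the map automatically descends from $\R^{|E(\Gamma)|}_{>0}$ through the quotient by $\Aut(\Gamma,w)$.
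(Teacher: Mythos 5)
Your route is essentially the paper's (which isolates the cell-wise statement as Lemma \ref{tor-mat}): write $Q_{(\Gamma,w,l)}=Q_{A,\un{l}}$ for a matrix $A$ representing the inclusion $H_1(\Gamma,\Z)\hookrightarrow C_1(\Gamma,\Z)$, identify $M[A]=M^*(\Gamma)$, and then invoke Lemma \ref{cone-expl} and Theorem \ref{mat-zon}; your explicit factorization of the linear map through $\R^{|E(\Gamma^3)|}$ is, if anything, slightly more careful about Definition \ref{maps} than the paper's $L_{(\Gamma,w)}$. But there is one genuine flaw: the claim that the matrix $A$ attached to an \emph{arbitrary} integral basis of $H_1(\Gamma,\Z)$ is totally unimodular ``because $H_1(\Gamma,\Z)$ is a saturated subgroup of $\Z^{E(\Gamma)}$'' is false, and saturation is not a reason. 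Concretely, for the graph with two vertices joined by three edges, all oriented the same way, $H_1(\Gamma,\Z)=\{(c_1,c_2,c_3):c_1+c_2+c_3=0\}$ is saturated in $\Z^3$, yet the lattice basis $(1,-1,0),\,(-1,2,-1)$ yields a matrix with an entry $2$, which is not totally unimodular (it is obtained from the fundamental-cycle basis by a unimodular change of basis). Total unimodularity is not cosmetic here: it is the standing hypothesis of Construction \ref{constr-matrix}, of Lemma \ref{cone-expl} (whose proof uses that $f_{A^t}$ is integral and primitive), and of Theorem \ref{mat-zon}, so the key step of your argument does not go through as written. (Your identification of the circuits of $M[A]$ with the bonds, i.e. $M[A]=M^*(\Gamma)$, is fine for any basis, since dependence is a statement over $\R$.)

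The repair is cheap and is exactly what the paper does: choose the basis of fundamental cycles associated to a spanning tree of $\Gamma$ (equivalently, quote the classical fact, cited in the paper as \cite[Ex. 6.4]{Zie}, that for a suitable basis the inclusion $H_1(\Gamma,\Z)\subset C_1(\Gamma,\Z)$ is represented by a totally unimodular matrix $A^*(\Gamma)$ with $M[A^*(\Gamma)]=M^*(\Gamma)$). Since a different choice of basis changes $Q_{A,\un{l}}$ only by arithmetic equivalence, the identification of the target cell $C\left(\w{M^*(\Gamma)}\right)$ and the rest of your bookkeeping are unaffected. One further small omission: Definition \ref{maps} requires $\tgt$ to be continuous on all of $\Mgt$, not just on each open cell; as in the paper, observe that $l\mapsto Q_{(\Gamma,w,l)}$ is continuous on the closed cone $\R_{\geq 0}^{|E(\Gamma)|}$, hence on each $\ov{C(\Gamma,w)}$, and conclude by the universal property of the quotient topology defining $\Mgt$.
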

\begin{proof}
Let us first prove that $\tgt$ is a continuous map.
The map $\tgt$ restricted to the closure of one cell $\ov{C(\Gamma,w)}$ of
$\Mgt$ is clearly continuous since the quadratic form $Q_{C}$ on $H_1(\Gamma,\R)$
depends continuously on the lengths $l\in \R_{\geq 0}^{|E(\Gamma)|}$.
The continuity of $\tgt$ follows then from the fact that
$\Mgt$ is a quotient of $\coprod \ov{C(\Gamma,w)}$ with the induced
quotient topology.

Lemma \ref{tor-mat} below implies that $\tgt(C(\Gamma,w))\subset C\left(\w{M^*(\Gamma)}\right)$.
It remains to see that this map $\tgt:C(\Gamma,w) \to
C\left(\w{M^*(\Gamma)}\right)$ is
induced by an integral linear function $L_{(\Gamma,w)}$ between $\R^{|E(\Gamma)|}$
and the space $\R^{\binom{g(\Gamma)+1}{2}}$ of symmetric matrices on
$H_1(\Gamma,\R)$.
We define
\begin{equation}\label{lin-func}
\begin{aligned}
L_{(\Gamma,w)}: \R^{|E(\Gamma)|} & \longrightarrow \R^{\binom{g(\Gamma)+1}{2}},\\
l & \mapsto Q_{(\Gamma,w,l)},
\end{aligned}
\end{equation}
where $Q_{(\Gamma,w,l)}$ is defined by (\ref{def-quad}) above .
Clearly $L_{(C,\Gamma)}$ is an integral linear map that induces
the map $\tgt:C(\Gamma,\Z) \to C\left(\w{M^*(\Gamma)}\right)$.
This concludes the proof.
\end{proof}

\begin{lemma}\label{tor-mat}
The map $\tgt$ sends the cell $C(\Gamma,w)$ of $\Mgt$ surjectively onto
the cell $C\left(\w{M^*(\Gamma)}\right)$ of $\AgV$.
\end{lemma}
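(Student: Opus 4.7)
The plan is to recognize the quadratic form $Q_{(\Gamma,w,l)}$ as an instance of the pull-back construction \ref{constr-matrix} applied to the cycle-edge incidence matrix of $\Gamma$, and then extract from the theory of zonotopal Voronoi cones (Lemma \ref{cone-expl} and Theorem \ref{mat-zon}) both the target cell and the surjectivity onto it.

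First I would choose an orientation of $\Gamma$ and a basis $c_1,\ldots,c_{g(\Gamma)}$ of $H_1(\Gamma,\Z)$, writing each cycle as $c_i=\sum_e a_{ie}\,e$ with $a_{ie}\in\{-1,0,1\}$. Let $A=(a_{ie})\in M_{g(\Gamma),|E(\Gamma)|}(\R)$; this is the classical cycle-edge incidence matrix, which is totally unimodular and whose vector matroid is the cographic matroid $M^*(\Gamma)$ (a standard fact from matroid theory, see \cite[Sec. 5.1]{Oxl}). With this basis, the defining formula (\ref{def-quad}) gives $Q_{(\Gamma,w,l)}|_{H_1(\Gamma,\R)}(x)=\|A^tx\|_{l}=Q_{A,l}(x)$, which is exactly the form of Construction \ref{constr-matrix}. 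Since $Q_{(\Gamma,w,l)}$ vanishes on the $\R^{|w|}$ summand, Remark \ref{rat-qua} gives an $h\in\GL_g(\Z)$ carrying $Q_{(\Gamma,w,l)}$ to $\left(\begin{smallmatrix}Q_{A,l}&0\\0&0\end{smallmatrix}\right)$; consequently $\Vor(Q_{(\Gamma,w,l)})$ is $\GL_g(\Z)$-equivalent to $\Vor(Q_{A,l})$.

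By Lemma \ref{cone-expl}, $\Vor(Q_{A,l})=Z_A$, so the $\GL_g(\Z)$-class $[\Vor(Q_{(\Gamma,w,l)})]$ equals $[Z_A]=[Z_{M^*(\Gamma)}]$; then Theorem \ref{mat-zon}(\ref{mat-zon2}) identifies this class with $[Z_{\widetilde{M^*(\Gamma)}}]$, so that by definition of the Voronoi cell structure we obtain $\tgt(C(\Gamma,w))\subseteq C(\widetilde{M^*(\Gamma)})$.

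For surjectivity, I observe that as $l$ ranges over $\R_{>0}^{|E(\Gamma)|}$ the forms $Q_{A,l}$ fill out the open cone $\sigma^0(A)\subset \Ort$ by the very definition in \ref{constr-matrix}. It remains to see that $\sigma^0(A)$ coincides with $\sigma^0(\widetilde{M^*(\Gamma)})$. This reduces to noticing two things about the passage from $M^*(\Gamma)$ to $\widetilde{M^*(\Gamma)}$ at the level of the matrix $A$: loops of $M^*(\Gamma)$ correspond to separating edges of $\Gamma$ (Proposition \ref{simple-mat-graph}(ii)) and give zero columns of $A$, which contribute $0$ to $Q_{A,l}$ regardless of $l(e)$; parallel pairs of $M^*(\Gamma)$ correspond to edges lying in the same $C1$-set and give columns $v_{e}=\pm v_{e'}$, so their joint contribution is $(l(e)+l(e'))\,v_e v_e^t$. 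Hence $\sigma^0(A)=\sigma^0(\widetilde A)$, and passing to the quotient by $\Stab(\sigma^0(\widetilde{M^*(\Gamma)}))$ yields a surjective map of cells $C(\Gamma,w)\twoheadrightarrow C(\widetilde{M^*(\Gamma)})$.

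The only delicate step is the identification $M[A]=M^*(\Gamma)$ together with the translation of the $C1$-set/separating-edge dichotomy into the matroid-theoretic simplification; everything else is a bookkeeping application of Construction \ref{constr-matrix}, Lemma \ref{cone-expl} and Theorem \ref{mat-zon}. Once that identification is in hand, both the containment and the surjectivity fall out of the fact that the length parameters $l$ parametrize $\sigma^0(A)$ onto, exactly matching the definition of the zonotopal cell attached to $\widetilde{M^*(\Gamma)}$.
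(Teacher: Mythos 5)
Your proposal is correct and follows essentially the same route as the paper: one identifies $Q_{(\Gamma,w,l)}$ with $Q_{A,\un{l}}$ for the totally unimodular matrix $A$ representing the inclusion $H_1(\Gamma,\Z)\hookrightarrow C_1(\Gamma,\Z)$ (so that $M[A]=M^*(\Gamma)$), and then concludes via Lemma \ref{cone-expl} and Theorem \ref{mat-zon}. The extra step you carry out by hand, namely $\sigma^0(A)=\sigma^0(\w{A})$ via zero and parallel columns, is exactly what the paper delegates to Theorem \ref{mat-zon}(\ref{mat-zon2}) (together with $\sigma^0_{Z_A}=\sigma^0(A)$ from Lemma \ref{cone-expl}), so it is a harmless elaboration rather than a different argument.
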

\begin{proof}
We use the construction in \ref{constr-matrix}.
Fixing an orientation of $\Gamma$, a basis of $H_1(\Gamma,\Z)$ and
an order of the edges of $\Gamma$, we get a natural inclusion
$$H_1(\Gamma,\Z)\cong  \Z^{g(\Gamma)}\hookrightarrow \Z^n\cong
C_1(\Gamma,\Z).
$$
The transpose of the integral matrix representing this
inclusion, call it $A^*(\Gamma)\in M_{g(\Gamma),n}(\Z)$, is well-known
to be totally unimodular and such that $M^*(\Gamma)=$ $M[A^*(\Gamma)]$
(see for example \cite[Ex. 6.4]{Zie}).

Now given a length function $l:E(\Gamma)\to \R_{>0}$, consider the
$n$-tuple $\un{l}\in \R_{>0}^n$ whose entries are the real positive
numbers $\{l(e)\}_{e\in E(\Gamma)}$ with respect to the order chosen on $E(\Gamma)$.
Comparing definitions (\ref{Q_A}) and (\ref{def-quad}), we deduce
that $Q_{A^*(\Gamma),\un{l}}=Q_{(\Gamma,w,l)}$.
The conclusion now follows from Lemma \ref{cone-expl} and Theorem \ref{mat-zon}.
\end{proof}

\subsection{Tropical Schottky}\label{Sch-section}

In this subsection, we want to prove a Schottky-type theorem, i.e.
we describe the image of the map $\tgt$.


We need to recall the following result (see \cite[3.1.1,
3.1.2, 3.2.1]{Oxl} for a proof).

\begin{lemma}
Let $\Gamma$ be a graph. For any subset
$I\subset E(\Gamma)=E(M^*(\Gamma))$, we have that
\begin{equation}\label{gr-contr}
M(\Gamma)\setminus I=M(\Gamma\setminus I)
\end{equation}
\begin{equation}\label{cogr-contr}
M^*(\Gamma)\setminus I=M^*(\Gamma/I)
\end{equation}
where $\Gamma\setminus I$ (resp. $\Gamma/I$)
is the graph obtained from $\Gamma$
by deleting (resp. contracting) the edges in $I$ and,
for a matroid $M$ and $I\subset E(M)$, we denote by $M\setminus I$
the matroid obtained from $M$ by deleting $I$.
\end{lemma}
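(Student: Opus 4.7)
The plan is to verify both equalities at the level of circuit sets, which determine a matroid uniquely (as recalled just after Definition \ref{matroid}).

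For the first equality $M(\Gamma)\setminus I = M(\Gamma\setminus I)$, I would proceed directly from the definitions. On the one hand, it is a standard fact (following from the axiomatic definition of deletion) that the circuits of $M\setminus I$ are precisely those circuits of $M$ contained in $E(M)\setminus I$. On the other hand, $M(\Gamma)$ has circuits equal to the cycles of $\Gamma$, so the circuits of $M(\Gamma)\setminus I$ are exactly the cycles of $\Gamma$ that use no edge of $I$. These coincide tautologically with the cycles of $\Gamma\setminus I$, i.e.\ with the circuits of $M(\Gamma\setminus I)$. Matching circuit sets gives the equality of matroids. This is essentially the observation of \cite[3.1.1]{Oxl} applied to the graphic matroid.

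For the second equality $M^*(\Gamma)\setminus I = M^*(\Gamma/I)$, I would combine the first equality with matroid duality. Recall (see subsection on graphic and cographic matroids) that $M^*(\Gamma)$ is the matroid dual of $M(\Gamma)$ in the sense of Definition \ref{mat-dual}. A fundamental feature of this duality (see \cite[Sec. 3.1]{Oxl}) is that deletion and contraction are interchanged: for any matroid $M$ and any $I\subset E(M)$,
\begin{equation*}
(M\setminus I)^{*} = M^{*}/I \qquad \text{and equivalently} \qquad M^{*}\setminus I = (M/I)^{*}.
\end{equation*}
Applying the second identity to $M=M(\Gamma)$ yields $M^{*}(\Gamma)\setminus I = (M(\Gamma)/I)^{*}$, so it suffices to show $M(\Gamma)/I = M(\Gamma/I)$, i.e.\ that graph contraction corresponds to matroid contraction on the graphic side. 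This is dual in spirit to the first equality; one checks that the cycles of $\Gamma/I$ are exactly the subsets $S\subseteq E(\Gamma)\setminus I$ for which $S\cup J$ is a cycle of $\Gamma$ for some $J\subseteq I$, which is the characterization of the circuits of $M(\Gamma)/I$. Dualizing then gives $M^{*}(\Gamma)\setminus I = M^{*}(\Gamma/I)$.

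The main (very mild) obstacle is the bookkeeping in the identification $M(\Gamma)/I = M(\Gamma/I)$: one must carefully reconcile the matroid-theoretic definition of contraction with the combinatorial operation of edge-contraction in a graph (accounting for loops created and for identified parallel edges). Once that is done, no further computation is needed. Alternatively, one could avoid duality altogether and argue purely combinatorially that the bonds of $\Gamma$ disjoint from $I$ coincide with the bonds of $\Gamma/I$, but the duality route above is shorter and fits naturally with the matroid formalism already introduced in the paper.
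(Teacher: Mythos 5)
The paper gives no proof of this lemma at all: it is recalled as a standard result with a pointer to \cite[3.1.1, 3.1.2, 3.2.1]{Oxl}, and those three facts (graph deletion versus matroid deletion, the exchange of deletion and contraction under matroid duality, and graph contraction versus matroid contraction) are exactly the ones you re-derive. So in substance your route coincides with what the paper delegates to the reference, and the overall architecture is sound: circuits for (\ref{gr-contr}), then $M^{*}\setminus I=(M/I)^{*}$ together with $M(\Gamma)/I=M(\Gamma/I)$ and $M^{*}(\Gamma)=M(\Gamma)^{*}$ for (\ref{cogr-contr}).

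However, the one step you propose to ``check'' is false as literally written. The cycles of $\Gamma/I$ are \emph{not} exactly the subsets $S\subseteq E(\Gamma)\setminus I$ such that $S\cup J$ is a cycle of $\Gamma$ for some $J\subseteq I$, and the circuits of $M(\Gamma)/I$ are not all sets of the form $C\setminus I$ with $C$ a circuit of $M(\Gamma)$: in both cases one must take the \emph{minimal} nonempty such sets. Concretely, let $\Gamma$ be a $4$-cycle with edges $e_1,e_2,e_3,e_4$ plus a chord $f$ joining two opposite vertices, and let $I=\{f\}$. Then $S=\{e_1,e_2,e_3,e_4\}$ satisfies your condition with $J=\emptyset$, but in $\Gamma/I$ it is the union of two $2$-cycles meeting at the identified vertex, hence not a cycle; correspondingly it is not a circuit of $M(\Gamma)/I$, since it properly contains the circuit $\{e_1,e_2\}=\{e_1,e_2,f\}\setminus I$. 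The repair is routine: the circuits of $M(\Gamma)/I$ are the minimal nonempty sets $C\setminus I$ with $C$ a cycle of $\Gamma$, and these coincide with the cycles of $\Gamma/I$, because any cycle of $\Gamma/I$ extends to a cycle of $\Gamma$ by inserting a path of $I$-edges inside each contracted component (hence is of the form $C\setminus I$, and is minimal since its proper subsets are forests), while a non-minimal $C\setminus I$ decomposes in $\Gamma/I$ into several edge-disjoint cycles and so is not a cycle there. With minimality inserted on both sides your argument goes through and yields both (\ref{gr-contr}) and (\ref{cogr-contr}); everything else in the proposal (the deletion statement via circuits, and the duality identities) is correct.
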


From formula (\ref{cogr-contr}) and Theorem \ref{mat-zon}(\ref{mat-zon3}),
we deduce that the collection of cographic cones
$$\Cogr:=\{\sigma_Z\subset \Ort\: : \:  [\sigma_Z]=[\sigma(M)] \text{ for a cographic
matroid } M\}$$
is closed under taking faces of the cones, and therefore it defines a
$\GL_g(\Z)$-admissible decomposition of a closed subcone
of $\Ort$, i.e. $\Cogr$ satisfies all the properties of Definition
\ref{decompo}  except the last one.
Therefore we can give the following

\begin{defi}\label{Cogra}
$\Agco$ is the stacky subfan of $\Agzon\subset \AgV$ whose
cells are of the form $C(M)$, where $M$ is a simple cographic matroid
of rank at most $g$.
\end{defi}


The following Proposition summarizes some important properties of $\Agco$
(compare with Propositions \ref{prop-Mgt} and \ref{prop-AgV}).

\begin{prop}\label{max-Agco}
\noindent
\begin{enumerate}[(i)]
\item \label{Agco-cells}
The cells of $\Agco$ are of the form $C\left(M^*([\Gamma]_2)\right)$,
where $[\Gamma]_2$ varies among the 2-isomorphism classes of
$3$-edge-connected graphs of genus at most $g$.
\item \label{Agco-pure}
$\Agco$ has pure dimension $3g-3$ and its maximal cells are
of the form $C\left(M^*(\Gamma)\right)$, where $\Gamma$ is
$3$-regular and $3$-(edge)-connected.
\item \label{Agco-conn}
$\Agco$ is connected through codimension one.
\item\label{Agco-codim1}
All the codimension one cells of $\Agco$ lie in the closure of one,
two or three maximal cells of $\Agco$.
\end{enumerate}
\end{prop}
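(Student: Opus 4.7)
The plan is to handle each part using the description of cells of $\Agco$ from Definition \ref{Cogra}, Theorem \ref{mat-zon}(\ref{mat-zon3}) for the simplicial structure of $\sigma(M)$, and the identity (\ref{cogr-contr}) translating deletion in $M^{*}(\Gamma)$ into edge contraction in $\Gamma$.

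For (\ref{Agco-cells}), Proposition \ref{simple-mat-graph}(ii) shows that $M^{*}(\Gamma)$ is simple iff $\Gamma\equiv_{2}\Gamma^{3}$, i.e.\ iff $\Gamma$ is $3$-edge-connected, while Remark \ref{2-isomo} says $M^{*}(\Gamma_{1})=M^{*}(\Gamma_{2})$ iff $[\Gamma_{1}]_{2}=[\Gamma_{2}]_{2}$; the rank identity $r(M^{*}(\Gamma))=g(\Gamma)$ supplies the genus bound. For (\ref{Agco-pure}), Theorem \ref{mat-zon}(\ref{mat-zon3}) gives $\dim C(M^{*}([\Gamma]_{2}))=|E(\Gamma)|$, and the inequality $\delta(\Gamma)\geq\lambda(\Gamma)\geq 3$ together with $g(\Gamma)=|E(\Gamma)|-|V(\Gamma)|+1\leq g$ yields $|E(\Gamma)|\leq 3g-3$, with equality exactly when $\Gamma$ is cubic of genus $g$. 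The parenthetical ``(edge)'' is accounted for by the classical fact that $k(\Gamma)=\lambda(\Gamma)$ for cubic graphs.

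For (\ref{Agco-conn}), I would mimic the proof of Proposition \ref{prop-Mgt}(\ref{Mgt-conn}): any two $3$-regular $3$-connected graphs of genus $g$ can be linked by a sequence of twists of the type shown in Figure \ref{twist-3reg}, each passing through a common specialization of the codimension-one type described in (\ref{Agco-codim1}) below. The principal obstacle is choosing these twists so that every intermediate graph in the sequence remains $3$-edge-connected; this requires a refinement of the classical twist-connectivity theorem to the class of cubic $3$-connected graphs, which should be available in the graph-theoretic literature.

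For (\ref{Agco-codim1}), rerunning the count of (\ref{Agco-pure}) with $|E(\Gamma)|=3g-4$ forces $g(\Gamma)=g$, $|V(\Gamma)|=2g-3$, and exactly one vertex $v$ of valence $4$ with all other $2g-4$ vertices of valence $3$. By Theorem \ref{mat-zon}(\ref{mat-zon3}) and (\ref{cogr-contr}), the cell $C(M^{*}(\Gamma))$ lies in the closure of $C(M^{*}(\Gamma'))$ iff $\Gamma\equiv_{2}\Gamma'/e$ for some edge $e$ of $\Gamma'$; for $\Gamma'$ cubic and $3$-edge-connected of genus $g$ this means that $\Gamma'$ is obtained from $\Gamma$ by splitting $v$ into two adjacent valence-$3$ vertices, with the four half-edges at $v$ partitioned into two pairs. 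Since there are exactly $\binom{4}{2}/2=3$ such partitions, and some may coincide as $2$-isomorphism classes or yield graphs that are not $3$-edge-connected (hence do not define cells of $\Agco$), the number of maximal cells of $\Agco$ containing $C(M^{*}(\Gamma))$ in their closure is at most three. It is at least one because $\Agco$ is pure-dimensional by (\ref{Agco-pure}), so every cell must be a face of some maximal cell.
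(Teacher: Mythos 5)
Your route is essentially the paper's (Definition \ref{Cogra} together with Remark \ref{2-isomo}, Proposition \ref{simple-mat-graph}, Theorem \ref{mat-zon}(\ref{mat-zon3}) and formula (\ref{cogr-contr}), then the valence/genus count, then the twisting moves of Figure \ref{twist-3reg}), but there is a genuine gap in part (\ref{Agco-pure}) which then propagates to part (\ref{Agco-codim1}). Your edge count only shows that every cell of $\Agco$ has dimension at most $3g-3$ and that the cells of dimension exactly $3g-3$ are the $C(M^*(\Gamma))$ with $\Gamma$ cubic, $3$-edge-connected of genus $g$. Pure dimensionality is a different assertion: every cell --- including those coming from $3$-edge-connected graphs of genus strictly less than $g$, or of genus $g$ but not cubic --- must lie in the closure of a top-dimensional cell, i.e.\ no cell of dimension $<3g-3$ may be maximal. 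You never prove this, yet your ``at least one'' claim in (\ref{Agco-codim1}) invokes it verbatim; without it, a codimension one cell could a priori be itself maximal and lie in the closure of no $(3g-3)$-dimensional cell. The paper closes exactly this step by citing \cite[Prop. A.2.4]{CV1} (every $3$-edge-connected graph is a specialization of a $3$-regular, $3$-edge-connected one) and translating specializations into faces of cones via (\ref{cogr-contr}) and Theorem \ref{mat-zon}(\ref{mat-zon3}). Concretely, the missing combinatorial input is that one can split vertices of valence $\geq 4$ (and deal with graphs of genus $<g$) while preserving $3$-edge-connectivity; your own example in (\ref{Agco-codim1}) of splittings that destroy $3$-edge-connectivity shows this is not automatic.

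On part (\ref{Agco-conn}) you correctly identify the obstacle --- the intermediate cubic graphs in the twist sequence must remain $3$-edge-connected, or else the intermediate maximal cells and the connecting codimension one cells leave $\Agco$ --- but you leave it as a hoped-for reference. This is precisely \cite[Thm. 3.3]{Cap} (any two $3$-regular, $3$-connected graphs of the same genus are $3$-linked), which is what the paper cites; so this part is incomplete as written rather than wrong, and is repaired by quoting that theorem. The remaining pieces (part (\ref{Agco-cells}), the characterization of codimension one cells as genus $g$ graphs with one $4$-valent vertex and all others trivalent, and the bound of at most three maximal cells via the three pairings of the half-edges at the $4$-valent vertex, up to $2$-isomorphism and loss of $3$-edge-connectivity) match the paper's argument and are fine.
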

\begin{proof}
Part (\ref{Agco-cells}) follows by combining
Definition \ref{Cogra}, Remark \ref{2-isomo} and
Proposition \ref{simple-mat-graph}.

According to Theorem \ref{mat-zon}(\ref{mat-zon3}),
a cell $C(M^*([\Gamma]_2))$ of $\Agco$ is
of maximal dimension if and only if $\Gamma$ has the maximum number
of edges, and this happens precisely when $\Gamma$ is $3$-regular
in which case $\#E(\Gamma)=\dim C(M^*([\Gamma]_2))=3g-3$.
On the other hand, using the fact that every $3$-edge-connected
graph of genus $g$ is the specialization of a $3$-regular and $3$-edge-connected graph
(see \cite[Prop. A.2.4]{CV1}), formula
(\ref{cogr-contr}) and Theorem \ref{mat-zon}(\ref{mat-zon3}) give
that every cell of $\Agco$ is the face of some maximal dimensional
cell, i.e. $\Agco$ is of pure dimension $3g-3$.
To conclude the proof of part (\ref{Agco-pure}), it is enough to recall that
a $3$-edge-connected and $3$-regular graph $\Gamma$ is also $3$-connected
(see for example \cite[Lemma A.1.2]{CV1}) and that $[\Gamma]_2=\{\Gamma\}$
according to Fact \ref{3v}.

Using the same argument as in the beginning of the proof of
Proposition \ref{prop-Mgt}, it is easy to see that the codimension
one cells of $\Agco$ are of the form $C(M^*([\Gamma]_2))$, where
$[\Gamma]_2$ varies among the $2$-equivalence classes of genus $g$ graphs
having one vertex of valence $4$ and all the others of valence $3$
(it is easy to see that this property is preserved under
$2$-isomorphism). The same proof as in Proposition \ref{prop-Mgt}
gives now part (\ref{Agco-codim1}) while part (\ref{Agco-conn}) follows from
\cite[Thm. 3.3]{Cap}: any two $3$-regular and $3$-(edge)-connected graphs of the same genus are $3$-linked,
i.e. they can be obtained one from the other via a sequence of twisting operations as in
Figure \ref{twist-3reg} in such a way that each intermediate graph is also $3$-edge-connected.

\end{proof}


From  the above Proposition \ref{max-Agco} and Lemma \ref{tor-mat},
we deduce the following tropical Schottky theorem.

\begin{thm}\label{Sch}
The tropical Torelli map $\tgt$ is full and its image
is equal to the stacky subfan $\Agco\subset\AgV$.
\end{thm}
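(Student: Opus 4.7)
The plan is to deduce both assertions directly from Lemma~\ref{tor-mat}, combined with the description of cells of $\Agco$ given in Proposition~\ref{max-Agco}(\ref{Agco-cells}).

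First I would handle fullness. By Lemma~\ref{tor-mat}, every cell $C(\Gamma,w)$ of $\Mgt$ is sent \emph{surjectively} onto the cell $C(\widetilde{M^*(\Gamma)})$ of $\AgV$; this is exactly the defining condition of fullness (Definition~\ref{maps}). So fullness is immediate once the image lands in $\AgV$, which was established in Theorem~\ref{tr-map}.

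Next I would verify the containment $\tgt(\Mgt) \subseteq \Agco$. Given a stable marked graph $(\Gamma,w)$ of genus $g$, Proposition~\ref{simple-mat-graph}(ii) identifies $\widetilde{M^*(\Gamma)} = M^*(\Gamma^3)$ for any $3$-edge-connectivization $\Gamma^3$ of $\Gamma$. Since $\Gamma^3$ is $3$-edge-connected (hence this matroid is simple and cographic) and $g(\Gamma^3) \leq g(\Gamma) \leq g$, the cell $C(\widetilde{M^*(\Gamma)})$ is a cell of $\Agco$ by Proposition~\ref{max-Agco}(\ref{Agco-cells}). Combined with Lemma~\ref{tor-mat}, this shows $\tgt(C(\Gamma,w)) \subseteq \Agco$ for every cell of $\Mgt$, giving $\tgt(\Mgt) \subseteq \Agco$.

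For the reverse inclusion $\Agco \subseteq \tgt(\Mgt)$, I would show that every cell of $\Agco$ is hit. By Proposition~\ref{max-Agco}(\ref{Agco-cells}), such a cell has the form $C(M^*([\Gamma']_2))$ for some $3$-edge-connected graph $\Gamma'$ of some genus $g' \leq g$. The key observation is that a $3$-edge-connected graph has minimum valence at least $3$ (since $\lambda(\Gamma')\le \delta(\Gamma')$), so every vertex of $\Gamma'$ can carry arbitrary weight without violating stability. Thus one can define a weight function $w \colon V(\Gamma')\to \Z_{\geq 0}$ with total weight $|w| = g - g'$ (e.g.\ concentrated on a single vertex), producing a stable marked graph $(\Gamma', w)$ of genus $g$. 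Because $\Gamma'$ is already $3$-edge-connected, $(\Gamma')^{3} = \Gamma'$, so $\widetilde{M^*(\Gamma')} = M^*(\Gamma') = M^*([\Gamma']_2)$. Lemma~\ref{tor-mat} then tells us that for any length function $l$, the Jacobian of the tropical curve $(\Gamma',w,l)$ lies in $C(M^*([\Gamma']_2))$, and in fact the surjectivity part of Lemma~\ref{tor-mat} hits the whole cell as $l$ varies. This completes the proof.

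The only step requiring genuine content is the last one, and even there the main obstacle is purely combinatorial: ensuring that a representative graph for each $2$-isomorphism class in $\Agco$ can be promoted to a stable marked graph of genus exactly $g$, which is free thanks to the $3$-edge-connectivity hypothesis forcing $\delta(\Gamma')\geq 3$.
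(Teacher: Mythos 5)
Your proof is correct and follows essentially the same route as the paper, whose own argument is just the one-line deduction ``from Proposition \ref{max-Agco} and Lemma \ref{tor-mat}''; you have simply made explicit the three steps (fullness from Lemma \ref{tor-mat}, containment via $\widetilde{M^*(\Gamma)}=M^*(\Gamma^3)$, and surjectivity by putting extra weight $g-g(\Gamma')$ on a $3$-edge-connected representative, which is stable since $\delta(\Gamma')\geq\lambda(\Gamma')\geq 3$) that the paper leaves to the reader.
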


\begin{remark}
It is known (see Example \ref{small-dim} or \cite[Chap. 4]{Val}) that
$\Agco=\AgV$ if and only if $g\leq 3$. Therefore
$\tgt:\Mgt\to \AgV$ is surjective if and only if $g\leq 3$.
This has to be compared with the fact that the classical Torelli map
$t_g:M_g\to A_g$ is dominant if and only if $g\leq 3$.
\end{remark}

\subsection{Tropical Torelli}\label{tor-section}

In \cite[Thm. 4.1.9]{CV1}, the authors determine when two tropical
curves $C$ and $C'$ of total weight zero (i.e. tropical curves up to tropical
modifications in the sense of Mikhalkin-Zharkov)
are such that $\Jac(C)\cong \Jac(C')$. Indeed, we show here that
the same result extends easily to the more general case of tropical curves
(with possible non-zero weight).
We first need the following definitions.

\begin{defi}\label{2-isomo-trop}
Two tropical curves $C=(\Gamma, w, l)$ and $C'=(\Gamma',w',l')$
are $2$-isomorphic, and we write $C\equiv_2 C'$, if there exists a bijection
$\phi:E(\Gamma)\to E(\Gamma')$, commuting with the length functions $l$ and
$l'$, that induces a $2$-isomorphism between $\Gamma$ and $\Gamma'$.
We denote by $[C]_2$ the $2$-isomorphism equivalence class of a tropical
curve $C$.
\end{defi}

Similarly to definition \ref{3-conn}, we have the following

\begin{lemmadefi}\label{3-conn-trop}
Let $C=(\Gamma,l,w)$ a tropical curve. A $3$-edge-connecti\-vization
of $C$ is a tropical curve $C^3=(\Gamma^3,l^3,w^3)$ obtained
in the following manner:
\begin{enumerate}[(i)]
\item $\Gamma^3$ is a $3$-edge-connectivization of $\Gamma$
in the sense of definition \ref{3-conn}, i.e. $\Gamma^3$ is obtained
from $\Gamma$ by contracting all the separating edges of $\Gamma$
and, for each $C1$-set $S$ of $\Gamma$, all but one the edges of $S$,
which we denote by $e_S$;
\item $w^3$ is the weight function on $\Gamma^3$ induced by the weight
function $w$ on $\Gamma$ in the way explained in \ref{ps-trop}
viewing $\Gamma^3$ as a specialization of $\Gamma$;
\item $l^3$ is the length function on $\Gamma^3$ given
by $$l^3(e_S)=\sum_{e\in S} l(e),$$
for each C1-set $S$ of $\Gamma$.
\end{enumerate}
The $2$-isomorphism class of $C^3$ is well-defined;
it will be called the $3$-edge-connectivi\-zation class of $C$
and denoted by $[C^3]_2$.
\end{lemmadefi}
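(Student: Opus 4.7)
The plan is to verify directly the two conditions of Definition \ref{2-isomo-trop} for any two $3$-edge-connectivizations $C^3_1 = (\Gamma^3_1, w^3_1, l^3_1)$ and $C^3_2 = (\Gamma^3_2, w^3_2, l^3_2)$ of $C = (\Gamma, w, l)$. The choices made in the construction reduce to: for each $C1$-set $S$ of $\Gamma$, which edge $e_S \in S$ is retained; write $e^1_S$ (resp.\ $e^2_S$) for the one retained in $\Gamma^3_1$ (resp.\ $\Gamma^3_2$). Everything else in the construction is canonical.

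The first step is to appeal to \cite[Lemma 2.3.8(iii)]{CV1}, which already establishes that $[\Gamma^3]_2$ is independent of all the choices involved in Definition \ref{3-conn}. More importantly, inspecting that result provides a canonical bijection
\[
\phi: E(\Gamma^3_1) \longrightarrow E(\Gamma^3_2), \qquad \phi(e^1_S) = e^2_S,
\]
which is a $2$-isomorphism of the underlying (unmarked) graphs. Thus the ``graph part'' of the definition of $2$-isomorphism of tropical curves is immediate.

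The second step is the length compatibility. By item (iii) of the construction, both $l^3_1(e^1_S)$ and $l^3_2(e^2_S)$ equal $\sum_{e \in S} l(e)$, a quantity depending only on the $C1$-set $S$ and the original length function $l$. Hence $l^3_2(\phi(e^1_S)) = l^3_1(e^1_S)$ automatically, and $\phi$ commutes with the length functions. Combined with the first step, this verifies Definition \ref{2-isomo-trop}, so $C^3_1 \equiv_2 C^3_2$ and the class $[C^3]_2$ is well-defined.

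The one subtle point, and the place where the argument could require a bit of care, is the weight data $w^3_i$, which is not explicitly tracked by Definition \ref{2-isomo-trop} but is nevertheless part of the datum of a tropical curve. Here the idea is that the specialization procedure of Section \ref{ps-trop} determines $w^3_i$ from $(\Gamma, w)$ and the set of contracted edges $F_i = \{\text{separating edges}\} \cup \bigcup_S (S \setminus \{e^i_S\})$. The symmetry among the edges of a $C1$-set $S$ (they all lie in exactly the same cycles of $\Gamma$, so the $|S|-1$ contracted edges collapse their incident vertices to a single one and the retained edge becomes a loop, regardless of which edge is retained) guarantees that the contributed weight at the common endpoint depends only on $S$ and $w|_{V(S)}$, not on the choice of $e^i_S$. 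Under the natural vertex identification induced by $\phi$ the weight functions therefore agree as well, completing the verification.
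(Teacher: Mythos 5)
Your first two steps are precisely the argument the paper intends (it states this as a Lemma--Definition without a separate proof for exactly this reason): the well-definedness of the underlying graph up to $2$-isomorphism is delegated to \cite[Lemma 2.3.8(iii)]{CV1}, just as in Definition \ref{3-conn}, and the length compatibility is immediate because $l^3(e_S)=\sum_{e\in S}l(e)$ depends only on the C1-set $S$ and not on which edge of $S$ is retained. Since Definition \ref{2-isomo-trop} asks only for an edge bijection commuting with the length functions and inducing a $2$-isomorphism of the underlying graphs, these two steps already prove the statement in full.

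Your final paragraph, however, is incorrect on two counts, and you should not present it as part of the verification. First, the edges of a C1-set need not form a cycle, so the retained edge does not in general become a loop: take two vertices $u,v$ joined by three internally disjoint paths of length two, with weight $1$ at each of the three midpoints (so the marked graph is stable); each pair of edges forming one path is a C1-set, and contracting one edge of such a pair leaves the other as an edge between two distinct vertices. Second, and more seriously, in this example the induced weight function genuinely depends on the choices: contracting in each path the edge incident to $u$ produces the $3$-banana graph with weights $(3,0)$ on its two vertices, while other choices produce $(2,1)$, $(1,2)$ or $(0,3)$, and these weighted graphs are not isomorphic. So the asserted agreement of $w^3_1$ and $w^3_2$ under the natural identification is false. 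The lemma nevertheless holds because Definition \ref{2-isomo-trop} deliberately ignores the weight data --- consistent with how $[C^3]_2$ is used in the proof of Theorem \ref{tor}, where the condition $[C^3]_2=[C'^3]_2$ is rephrased purely in terms of the underlying metric graphs. Delete the weight paragraph, or replace it by the observation that weights play no role in the equivalence relation being used.
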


It is now easy to extend \cite[Thm. 4.1.9]{CV1} to the case of tropical
curves.

\begin{thm}\label{tor}
Let $C$ and $C'$ be two tropical curves of genus $g$.
Then $\tgt(C)=\tgt(C')$ if and only if $[C^3]_{2}=[C'^3]_{2}$.
In particular $\tgt$ is injective on the locus of $3$-connected
tropical curves.
\end{thm}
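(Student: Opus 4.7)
The plan is to reduce to the weight-zero case treated in \cite[Thm.~4.1.9]{CV1} by separating the contribution of the weights (which sits in the null space of $Q_C$) from the contribution of the metric graph (which gives the positive-definite part). Given a tropical curve $C=(\Gamma,w,l)$ of genus $g$, the form $Q_C$ vanishes identically on the summand $\R^{|w|}$ and restricts on $H_1(\Gamma,\R)$ to the positive-definite form $Q_{(\Gamma,l)}\in \Omega_{g(\Gamma)}$; its rank is $g(\Gamma)=g-|w|$ and its null space is rational of dimension $|w|$. By Remark~\ref{rat-qua}, within the $\GL_g(\Z)$-orbit of $Q_C$ I may pick the block-diagonal representative $\left(\begin{smallmatrix}Q_{(\Gamma,l)}&0\\0&0\end{smallmatrix}\right)$, and similarly for $C'$. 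A short linear-algebra argument then shows that two such block-diagonal forms are $\GL_g(\Z)$-equivalent if and only if their null-space dimensions coincide \emph{and} their positive-definite blocks are $\GL_{g(\Gamma)}(\Z)$-equivalent: any $h\in\GL_g(\Z)$ that preserves the standard rational null space must be block upper-triangular, with its top-left block in $\GL_{g(\Gamma)}(\Z)$ conjugating the two positive-definite blocks.

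Consequently $\tgt(C)=\tgt(C')$ is equivalent to $|w|=|w'|$ (hence $g(\Gamma)=g(\Gamma')$) together with $\tgt(C_\circ)=\tgt(C'_\circ)$ in $A_{g(\Gamma)}^{{\rm tr}, V}$, where $C_\circ:=(\Gamma,\un{0},l)$ denotes the weight-zero tropical curve of genus $g(\Gamma)$ obtained from $C$ by forgetting the weight function (analogously for $C'$). Since $C_\circ$ has total weight zero, it is a tropical curve in the sense of \cite{MZ}, \cite{CV1} (see Remark~\ref{comp-MZ}), and \cite[Thm.~4.1.9]{CV1} identifies the last condition with $[C_\circ^3]_2=[{C'_\circ}^3]_2$. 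The remaining step is to replace $C_\circ^3$ by $C^3$: the $3$-edge-connectivization recipe of \ref{3-conn-trop} depends only on the underlying graph and the length function, so $C^3$ and $C_\circ^3$ share the same metric graph $(\Gamma^3,l^3)$ and differ only in their weight functions ($w^3$ versus $\un{0}$); since Definition~\ref{2-isomo-trop} of tropical 2-isomorphism imposes no condition on the weights, $[C^3]_2=[C_\circ^3]_2$ and likewise for $C'$, which yields the claimed equivalence.

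For the \emph{In particular} clause, if the underlying graph $\Gamma$ of $C$ is $3$-connected then $\Gamma$ is $3$-edge-connected, has no separating edges, and every $C1$-set is a singleton, so $C^3=C$; combined with Fact~\ref{3v}, which says that the $2$-isomorphism class of a $3$-connected graph consists of $\Gamma$ alone, this forces $C$ and $C'$ to be $2$-isomorphic whenever their Jacobians agree. The only step requiring a real argument is the block-triangular reduction in the first paragraph; it is essentially routine but must be carried out over $\Z$ rather than $\Q$, and this is where I expect to spend most of the effort~--- everything else is either a direct invocation of \cite[Thm.~4.1.9]{CV1} or a formal consequence of the fact that tropical $2$-isomorphism ignores the weight function.
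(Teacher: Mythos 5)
Your proposal is correct and follows essentially the same route as the paper: both reduce the statement to the weight-zero/metric-graph Torelli theorem of Caporaso--Viviani by splitting off the null space of $Q_C$ and observing that $3$-edge-connectivization and tropical $2$-isomorphism ignore the weight data, with Fact \ref{3v} giving the final clause. The only cosmetic difference is that you spell out the block-triangular $\GL_g(\Z)$-reduction explicitly and cite \cite[Thm.~4.1.9]{CV1}, whereas the paper phrases the same reduction via Albanese tori of the underlying metric graphs and invokes \cite[Thm.~4.1.10]{CV1}.
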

\begin{proof}
Note that $[C^3]_2=[C'^3]_2$ if and only if the $3$-edge-connectivizations
(in the sense of definition \cite[Def. 4.1.7]{CV1}) of the underlying metric
graphs $(\Gamma,l)$ and $(\Gamma',l')$ are cyclically equivalent
(in the sense of \cite[Def. 4.1.6]{CV1}), or in symbols
$[(\Gamma^3,l^3)]_{{\rm cyc}}=[(\Gamma'^3,l'^3)]_{{\rm cyc}}$.

On the other hand, from the definition \ref{Jac}, it follows that
$\Jac(C)\cong \Jac(C')$ if and only if the Albanese tori (in the sense
of definition \cite[4.1.4]{CV1}) of the underlying metric graphs
$(\Gamma,l)$ and $(\Gamma',l')$ are isomorphic, or in symbols
$\Alb(\Gamma,l)\cong \Alb(\Gamma',l')$.

With these two re-interpretations, the first assertion of the Theorem
follows from \cite[Thm 4.1.10]{CV1}. The second assertion follows from the
first and Fact \ref{3v}.
\end{proof}

Finally we can prove a tropical analogous of the classical Torelli
theorem which was conjectured by Mikhalkin-Zharkov in \cite[Sec. 6.4]{MZ}
and proved in \cite[Thm. A.2.1]{CV1} assuming the existence of the relevant
moduli spaces
(see \cite[Assumptions 1, 2, 3]{CV1}). However, since the conjectural
properties that these moduli spaces were assumed to have in
\cite{CV1} are slightly different from the
properties of the moduli spaces $\Mgt$ and $\AgV$ that we have
constructed here, we give a new proof of this result.

\begin{thm}\label{deg-one}
The tropical Torelli map $\tgt:\Mgt\to \AgV$ is of degree one
onto its image.
\end{thm}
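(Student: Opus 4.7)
The plan is to verify the two requirements of a degree-one map in Definition \ref{maps}: for a generic point $Q$ in any maximal cell of the image $\Agco$, the fiber $\tgt^{-1}(Q)$ consists of a single point, and the integral linear map inducing $\tgt$ on the corresponding source cell is primitive.

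By Proposition \ref{max-Agco}(ii), every maximal cell of $\Agco$ has the form $C(M^*(\Gamma))$ with $\Gamma$ a 3-regular, 3-connected graph of genus $g$. Fix such $\Gamma$ and a generic $l: E(\Gamma) \to \R_{>0}$, and set $C := (\Gamma, \un 0, l)$ and $Q := \tgt(C)$. For any preimage $C' = (\Gamma', w', l')$ of $Q$, the tropical Torelli theorem (Theorem \ref{tor}) yields $[(C')^3]_2 = [C^3]_2$. Since $\Gamma$ is 3-edge-connected we have $C^3 = C$, and since $\Gamma$ is 3-connected Fact \ref{3v} gives $[\Gamma]_2 = \{\Gamma\}$; hence $(\Gamma')^3 = \Gamma$ with matching length function. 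The 3-edge-connectivization of Lemma-Definition \ref{3-conn-trop} contracts only non-loop edges and therefore preserves total weight, forcing $|w'| = 0$. Stability then requires every vertex of $\Gamma'$ to have valence $\geq 3$. If $\Gamma'$ had a separating edge $\{u,v\}$, contracting it would merge $u,v$ into a vertex of valence $\geq (3-1)+(3-1)=4$, a valence that only grows under subsequent contractions, so $\Gamma=(\Gamma')^3$ would contain a vertex of valence $\geq 4$, contradicting 3-regularity. Likewise, any $C1$-set of $\Gamma'$ of size $\geq 2$ would, upon contraction of all but one of its edges, produce a vertex of valence $\geq 4$ in $\Gamma$. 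Hence $\Gamma'$ is already 3-edge-connected, $(\Gamma')^3=\Gamma'$, and $\Gamma'=\Gamma$. The equivalence $C'\equiv_2 C$ then amounts to a bijection $E(\Gamma)\to E(\Gamma)$ commuting with lengths and inducing a 2-iso $\Gamma\to\Gamma$, which by Fact \ref{3v} must be a genuine graph automorphism. Thus $[C']=[C]$ in $C(\Gamma,\un 0)=\R_{>0}^{|E(\Gamma)|}/\Aut(\Gamma,\un 0)$.

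For primitivity of $L:=L_{(\Gamma,\un 0)}:\R^{|E(\Gamma)|}\to\R^{\binom{g+1}{2}}$, recall by Lemma \ref{tor-mat} that $L(l)=A^*(\Gamma)\,\mathrm{diag}(l)\,A^*(\Gamma)^t=\sum_e l_e\,v_e v_e^t$, where the $v_e$ are the columns of the totally unimodular matrix $A^*(\Gamma)$. Pick a spanning tree $T\subset\Gamma$ and use the associated basis of $H_1(\Gamma,\Z)$ given by the fundamental cycles $C_1,\ldots,C_g$ indexed by the chord edges $c_1,\ldots,c_g\notin T$. In this basis each $v_{c_i}$ is the standard basis vector $e_i$, so $v_{c_i}v_{c_i}^t$ contributes $l_{c_i}$ only to the $(i,i)$ diagonal entry of $L(l)$. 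Consequently the off-diagonal entries of $L(l)$ depend exclusively on the tree-edge lengths, via a $\binom{g}{2}\times|T|$ integer matrix $N$ with $\pm 1$ entries. Using 3-connectivity of $\Gamma$ together with the total unimodularity of $A^*(\Gamma)$, one shows that $N$ admits a $|T|\times|T|$ submatrix of determinant $\pm 1$: integrality of the off-diagonal entries of $L(l)$ then forces $l_e\in\Z$ for every $e\in T$, after which integrality of the diagonal entries yields $l_{c_i}\in\Z$.

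The main obstacle is the primitivity step: while the reduction via the fundamental-cycle basis is conceptually clean (diagonal vs. off-diagonal contributions decouple nicely), exhibiting in full generality a $|T|\times|T|$ unimodular submatrix of $N$ for an arbitrary 3-regular 3-connected $\Gamma$ requires a careful combinatorial argument exploiting 3-connectivity alongside the total unimodularity of $A^*(\Gamma)$. Everything else reduces, via Theorem \ref{tor}, to the observation that no stable $(\Gamma',\un 0)\neq(\Gamma,\un 0)$ can 3-edge-connectivize to a 3-regular 3-connected graph $\Gamma$.
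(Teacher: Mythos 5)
The fiber-uniqueness half of your argument is sound and is essentially the paper's: Proposition \ref{max-Agco} plus Theorem \ref{tor} already give that a generic point of $\Agco$ is $\Jac(C)$ for a unique $C$ with $3$-regular, $3$-connected underlying graph. (One small wrinkle: $|w'|=0$ does not follow from ``$3$-edge-connectivization preserves total weight,'' because $2$-isomorphism of tropical curves in Definition \ref{2-isomo-trop} compares only length functions, not weights; you get $|w'|=0$ instead from the genus count $g=g(\Gamma')+|w'|$ together with $g(\Gamma')=g((\Gamma')^3)=g(\Gamma)=g$, or from the fact that $Q_{C'}$ must have rank $g$. This is easily repaired.)

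The genuine gap is in the primitivity step, exactly where you flag the ``main obstacle.'' Your route reduces primitivity of $L_{(\Gamma,\un{0})}$ to the existence of a $|T|\times|T|$ submatrix of determinant $\pm1$ in the matrix $N$ whose entry in row $(i,j)$ and column $e\in T$ is $(v_e)_i(v_e)_j$, but you offer no proof of this, and it does not follow from the total unimodularity of $A^*(\Gamma)$: $N$ is built from pairwise products of entries of $A^*(\Gamma)$, and such matrices need not inherit total unimodularity; it is not even clear a priori that $N$ has full column rank for an arbitrary spanning tree of an arbitrary $3$-regular $3$-connected graph. Since this unproved claim is precisely the content of primitivity in your setup, the proof is incomplete as it stands. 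The paper's argument sidesteps all of this: because $\Gamma$ is $3$-edge-connected, every $C1$-set is a single edge, and by \cite[Lemma 3.3.1]{CV1} every edge $e$ lies in two cycles $\Delta_1,\Delta_2$ whose supports intersect exactly in $\{e\}$; the associated classes $C_1,C_2\in H_1(\Gamma,\Z)$ satisfy $Q_{(\Gamma,w,l)}(C_1,C_2)=\pm l(e)$, so integrality of the bilinear form on $H_1(\Gamma,\Z)$ forces $l(e)\in\Z$ for \emph{every} edge directly --- no spanning tree, no separation into diagonal and off-diagonal entries, and no unimodular submatrix is needed. Unless you can actually establish your unimodular-submatrix claim, you should replace your primitivity step with this cycle-pair argument.
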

\begin{proof}
The image of $\tgt$ is equal to $\Agco$ according to Theorem \ref{Sch}.
Therefore,
we have to prove that $\tgt:\Mgt\to \Agco$ satisfies the two conditions of
Definition \ref{maps}.

Proposition \ref{max-Agco} and Theorem \ref{tor} give that
a generic point of $\Agco$ is of the form $\Jac(C)$
for a unique tropical curve $C=(\Gamma,w,l)$,
whose underlying graph $\Gamma$ is $3$-regular and
$3$-connected. This proves that the first condition of
Definition \ref{maps} is satisfied.

It remains to prove that the integral linear function $L_{(\Gamma,w)}$, defined in (\ref{lin-func}),
is primitive for a tropical curve
$C=(\Gamma,w,l)$ whose underlying graph $\Gamma$ is
$3$-regular and $3$-connected.
So suppose that the quadratic form $Q_{(\Gamma,w,l)}$
on $H_1(\Gamma,\R)$ is integral, i.e. that the
associated symmetric bilinear form (which, by abuse
of notation, we denote by $Q_{(\Gamma,w,l)}(-,-)$)
takes integral values on $H_1(\Gamma,\Z)$; we have
to show that the length function $l$ takes integral values.
Since $\Gamma$ is $3$-edge-connected by hypothesis, every edge of $\Gamma$
is contained in a $C1$-set and all the $C1$-sets of $\Gamma$ have cardinality
one (see \ref{conn-girth}). Therefore, using \cite[Lemma 3.3.1]{CV1},
we get that for every edge $e\in E(\Gamma)$ there exist two cycles $\Delta_1$ and $\Delta_2$ of $\Gamma$ such that the intersection of their
supports is equal to $\{e\}$. By definition \ref{def-quad}, these two
cycles define two elements $C_1$ and $C_2$ of $H_1(\Gamma,\Z)$
(with respect to any chosen orientation of $\Gamma$) such that
$Q_{(\Gamma,w,l)}(C_1,C_2)=l(e)$. Since
$Q_{(\Gamma,w,l)}(-,-)$ takes integral values on $H_1(\Gamma,\Z)$
by hypothesis, we get that $l(e)\in \Z$, q.e.d.
\end{proof}

\section{Planar tropical curves and the principal cone}

\subsection{$\Aggr$ and the principal cone}\label{first-Vor}

Another important stacky subfan of
$\Agzon$ (other than $\Agco$)
is formed by the zonotopal cells that correspond
to graphic matroids. Indeed,
from formula (\ref{gr-contr}) and Theorem \ref{mat-zon}(\ref{mat-zon3}),
it follows that the collection of graphic cones
$$\Gr:=\{\sigma_Z\subset \Ort\: : \:  [\sigma_Z]=[\sigma(M)] \text{ for a graphic
matroid } M\}$$
is closed under taking faces of the cones, and therefore it defines a
$\GL_g(\Z)$-admissible decomposition of a closed subcone
of $\Ort$, i.e. $\Gr$ satisfies all the properties of Definition
\ref{decompo}  except the last one.
Therefore we can give the following

\begin{defi}\label{Gra}
$\Aggr$ is the stacky subfan of $\Agzon\subset \AgV$ whose
cells are of the form $C(M)$, where $M$ is a simple graphic matroid of rank at most $g$.
\end{defi}

By combining Corollary \ref{faces-zon}, Remark \ref{2-isomo} and
Proposition \ref{simple-mat-graph}, we get the following

\begin{remark}\label{con-gr}
The cells of $\Aggr$ are of the form $C(M([\Gamma]_2))$,
where $[\Gamma]_2$ varies among the 2-isomorphism classes of
simple graphs of cogenus at most $g$.
\end{remark}

$\Aggr$ is closely related to the so-called principal cone (Voronoi's principal domain
of the first kind), see \cite[Chap. 8.10]{NamT} and \cite[Chap. 2.3]{Val}.
It is defined as
$$\prin^0:=\{Q=(q_{ij})\in \O \: : \: q_{ij}<0 \text{ for } i\neq j, \:
\sum_{j} q_{ij}>0 \text{ for all } i.\}
$$
It is well-known that $\Stab(\prin^0)=S_{g+1}$ (see \cite[Sec. 2.3]{Val})
and we will denote by $\Cpr:=[\prin^0/\Stab(\prin^0)]$ the cell of
$\AgV$ corresponding to the principal cone $\prin^0$, and call it the
principal cell.

The following result is certainly well-known (see for example \cite[Sec. 3.5.2]{Val}),
but we include a proof here by lack of a proper reference.

\begin{lemma}\label{prin-K}
The $\GL_g(\Z)$-equivalence class $[\prin^0]$ of the principal
cone is equal to $[\sigma^0(M(K_{g+1}))]$, where $K_{g+1}$ is the complete
simple graph on $(g+1)$-vertices. Therefore $\Cpr=C(M(K_{g+1}))$ in $\AgV$.
\end{lemma}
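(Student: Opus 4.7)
The plan is to exhibit an explicit totally unimodular matrix $A$ with $M[A]=M(K_{g+1})$ and then check by direct computation that $\sigma^0(A)$ equals $\prin^0$ (not just up to $\GL_g(\Z)$-action). The rest follows from the general machinery set up in Construction \ref{constr-matrix}, Lemma \ref{cone-expl} and Theorem \ref{mat-zon}.

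First I would fix a representative $A$ of $M(K_{g+1})$. Label the vertices of $K_{g+1}$ as $0,1,\ldots,g$ and orient each edge $\{i,j\}$ (with $i<j$) from $i$ to $j$. Take $A$ to be the reduced signed vertex–edge incidence matrix: rows indexed by $1,\ldots,g$ (the row for vertex $0$ is dropped) and columns indexed by the edges, the column $v_{ij}$ corresponding to $\{i,j\}$ being $e_i-e_j$ if $i>0$ and $-e_j$ if $i=0$ (where $e_1,\ldots,e_g$ is the standard basis of $\R^g$). This $A$ is totally unimodular, has rank $g$, and it is a classical fact that $M[A]=M(K_{g+1})$ (see \cite[Ex.~6.4]{Zie}).

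Next I would compute $\sigma^0(A)$ explicitly. By (\ref{Q_A}), for $\un{l}=(l_{ij})_{0\le i<j\le g}\in\R_{>0}^{\binom{g+1}{2}}$,
\begin{equation*}
Q_{A,\un{l}}(x)=\sum_{0\le i<j\le g}l_{ij}\,v_{ij}(x)^2=\sum_{j=1}^{g}l_{0j}\,x_j^2+\sum_{1\le i<j\le g}l_{ij}(x_i-x_j)^2.
\end{equation*}
Reading off the matrix $Q_{A,\un{l}}=(q_{ij})$ one finds $q_{ij}=-l_{ij}<0$ for $1\le i\neq j\le g$, and $q_{ii}=l_{0i}+\sum_{k\neq i}l_{ik}$, so that the $i$-th row sum is $\sum_{j}q_{ij}=l_{0i}>0$. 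Hence $Q_{A,\un{l}}\in\prin^0$, giving $\sigma^0(A)\subseteq\prin^0$. Conversely, given any $Q=(q_{ij})\in\prin^0$, set $l_{ij}:=-q_{ij}>0$ for $1\le i<j\le g$ and $l_{0i}:=\sum_{j}q_{ij}>0$; a direct check shows $Q=Q_{A,\un{l}}$, so $\prin^0\subseteq\sigma^0(A)$.

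Combining the two inclusions, $\prin^0=\sigma^0(A)$, and therefore $[\prin^0]=[\sigma^0(A)]=[\sigma^0(M(K_{g+1}))]$. Passing to the quotient by the respective stabilizers in $\GL_g(\Z)$, this yields the equality of cells $\Cpr=C(M(K_{g+1}))$ in $\AgV$. There is no real obstacle: the only point requiring care is choosing the right signed incidence matrix so that the identification $l_{ij}\leftrightarrow-q_{ij}$, $l_{0i}\leftrightarrow\sum_j q_{ij}$ gives the defining inequalities of $\prin^0$ on the nose; with any other totally unimodular representative of $M(K_{g+1})$ one would only obtain the equality up to the $\GL_g(\Z)$-action, which however still suffices for the statement.
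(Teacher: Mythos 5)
Your proof is correct and follows essentially the same route as the paper: both exhibit the reduced signed incidence matrix of $K_{g+1}$ (yours drops vertex $0$, the paper's drops vertex $g+1$, which is the same matrix up to relabeling and column signs), compute $Q_{A,\un{l}}$ explicitly, and verify that its off-diagonal entries and row sums give exactly the defining inequalities of $\prin^0$, so that $\sigma^0(A)=\prin^0$ on the nose. The only nitpick is the reference: the fact that this matrix is totally unimodular with $M[A]=M(K_{g+1})$ is the graphic (not cographic) case, for which \cite[Prop. 5.1.2, 5.1.3]{Oxl} is the appropriate citation rather than \cite[Ex. 6.4]{Zie}.
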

\begin{proof}
Call $\{v_1,\cdots, v_{g+1}\}$ the vertices of $K_{g+1}$
and $e_{ij}$ (for $i<j$) the unique edge of
$K_{g+1}$ joining $v_i$ and $v_j$. Choose the orientation
of $K_{g+1}$ such that if $i<j$ then $s(e_{ij})=e_i$
and $t(e_{ij})=e_j$. It can be easily checked that
the elements $\{\delta(v_1),\cdots,\delta(v_g)\}$ form
a basis for $\Im(\delta)=H_1(K_{g+1},\Z)^{\perp}$.
Consider the transpose of the integral matrix, call it $A(K_{g+1})$,
that gives the inclusion $H_1(K_{g+1},\Z)^{\perp}\hookrightarrow
C_1(K_{g+1},\Z)$ with respect to the basis $\{\delta(v_1),\cdots,
\delta(v_g)\}$ and $\{e_{ij}\}_{i<j}$. In other words
\begin{equation*}
A(K_{g+1})^t\cdot \delta(v_k)=\sum_{i<k} e_{ik}-\sum_{k<j} e_{kj}.\tag{*}
\end{equation*}
Observe that $A(K_{g+1})\in M_{g,n}(\Z)$ where $n=\binom{g+1}{2}=
\#E(K_{g+1})$. It is well-known (see \cite[Prop. 5.1.2, 5.1.3]{Oxl}) that
$A(K_{g+1})$ is totally unimodular and that $M(K_{g+1})=M[A(K_{g+1})]$.

We now apply the construction in \ref{constr-matrix} to this matrix
$A(K_{g+1})$. For a $n$-tuple $\un{l}=(l_{ij})_{i<j}\in \R_{>0}^n$
(setting $l_{j,i}=l_{i,j}$ if $i<j$),
consider the quadratic form $Q_{A(K_{g+1}),\un{l}}$
of formula (\ref{Q_A}).
For the associated bilinear symmetric form, which we denote  $Q_{A(K_{g+1}),\un{l}}(-,-)$ (by an abuse of notation), we can compute,
using (*) above, that (for $i\neq j$)
\begin{equation*}
\begin{sis}
& Q_{A(K_{g+1}),\un{l}}(\delta(v_i),\delta(v_i))=\sum_{1\leq k\neq i\leq g}
l_{k,i} +l_{i,g+1}, \\
& Q_{A(K_{g+1}),\un{l}}(\delta(v_i),\delta(v_j))=-l_{i,j}.
\end{sis}
\end{equation*}
This easily implies that $\sigma^0(A(K_{g+1}))=\prin^0$, which concludes
the proof since, as observed before,
$[\sigma^0(A(K_{g+1}))]=[\sigma^0(M(K_{g+1}))]$.
\end{proof}

From the previous Lemma, we deduce the following

\begin{prop}\label{desc-gr}
The stacky subfan $\Aggr$ of $\Agzon \subset \AgV$
coincides with the closure inside $\Agzon$ (or $\AgV$) of the principal cell
$\Cpr$.
In particular it has pure dimension equal to $\binom{g+1}{2}$
and $\Cpr$ is the unique maximal cell.
\end{prop}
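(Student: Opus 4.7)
My plan is to prove the equality $\Aggr = \overline{\Cpr}$ by a double inclusion, using three ingredients already at my disposal: Lemma \ref{prin-K}, which identifies $\Cpr$ with $C(M(K_{g+1}))$; Theorem \ref{mat-zon}(iii), which describes the faces of the cone attached to a simple matroid as deletions; and the graphic-deletion identity $M(\Gamma) \setminus I = M(\Gamma \setminus I)$ from equation (\ref{gr-contr}). Together these will let me match the cells of $\Aggr$ with the faces of any fixed representative of $[\sigma(M(K_{g+1}))]$.

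For the inclusion $\overline{\Cpr} \subseteq \Aggr$, I will fix a representative $\sigma(M(K_{g+1}))$ (for instance the one constructed in the proof of Lemma \ref{prin-K}) and observe that, by Theorem \ref{mat-zon}(iii), every face has the form $\sigma(M(K_{g+1}) \setminus I) = \sigma(M(K_{g+1} \setminus I))$ for some $I \subset E(K_{g+1})$, which is the cone of a (simple) graphic matroid. The corresponding cells of $\AgV$ therefore all belong to $\Aggr$ by Definition \ref{Gra}, and since the closure $\overline{\Cpr}$ is precisely the union of the cells coming from these faces, the inclusion follows.

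For the reverse inclusion, I will take an arbitrary cell $C(M([\Gamma]_2))$ of $\Aggr$ with $\Gamma$ simple and of cogenus at most $g$ (Remark \ref{con-gr}); since $\Gamma$ is connected, $|V(\Gamma)| = g^*(\Gamma)+1 \leq g+1$. Being simple on at most $g+1$ vertices, $\Gamma$ embeds as a spanning subgraph of $K_{g+1}$ after adjoining $g+1-|V(\Gamma)|$ isolated vertices (which does not alter its cycle matroid), hence $M(\Gamma) = M(K_{g+1}) \setminus I$ for some $I \subset E(K_{g+1})$. Theorem \ref{mat-zon}(iii) then realises $\sigma(M(\Gamma))$ as a face of $\sigma(M(K_{g+1}))$, so $C(M([\Gamma]_2))$ lies in $\overline{\Cpr}$. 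The pure-dimensionality claim, together with the maximality statement, falls out automatically: since $M(K_{g+1})$ is simple with $\binom{g+1}{2}$ elements, Theorem \ref{mat-zon}(iii) gives $\dim \Cpr = \binom{g+1}{2}$, and every other cell of $\Aggr$ is, by the argument just given, a proper face of $\sigma(M(K_{g+1}))$.

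The main subtlety I expect to handle carefully is the passage between $\GL_g(\Z)$-equivalence classes of matroids (which are what actually index the cells of $\Aggr$) and their actual representatives, when asserting that faces in the quotient $\AgV$ correspond bijectively to deletions on one fixed representative. This is, however, exactly what Theorem \ref{mat-zon}(ii)--(iii) is designed to guarantee, so no genuine new work should be needed beyond invoking the already-established compatibility of the construction in \ref{constr-matrix} with the $\GL_g(\Z)$-action.
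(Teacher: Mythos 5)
Your proposal is correct and follows essentially the same route as the paper: both inclusions rest on Lemma \ref{prin-K}, Theorem \ref{mat-zon}(iii) and the deletion formula (\ref{gr-contr}), with every simple graph of cogenus at most $g$ realized as $K_{g+1}$ minus some edges. The only (harmless) difference is that for $\ov{\Cpr}\subseteq \Aggr$ the paper simply invokes that $\Aggr$ is a closed subfan containing $\Cpr$, whereas you spell out that each face of $\sigma(M(K_{g+1}))$ is the cone of the simple graphic matroid $M(K_{g+1}\setminus I)$.
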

\begin{proof}
Consider the closure, call it $\ov{\Cpr}$, of $\Cpr$
inside $\AgV$. Note that $\Cpr\subset \Aggr$, because of the above
Lemma \ref{prin-K}, and therefore we get that $\ov{\Cpr}\subset
\Aggr$. In order to prove equality, consider a cell of $\Aggr$,
which, according to Remark \ref{con-gr}, is of the form
$C(M([\Gamma]_2))$, for a simple graph $\Gamma$ of cogenus at
most $g$. Such a graph can be obtained by $K_{g+1}$ by
deleting some edges and therefore, using  Theorem
\ref{mat-zon}(\ref{mat-zon3}) and formula (\ref{gr-contr}), we get
that $C(M([\Gamma]_2))$ is a face of the closure
of $C(M(K_{g+1}))=\Cpr$, and hence it belongs to
$\ov{\Cpr}$, q.e.d.
\end{proof}


\begin{remark}
The principal cone $\prin^0$ has many important properties, among which
we want to mention the following
\begin{enumerate}[(i)]
\item $\Cpr$ is the unique zonotopal cell
of maximal dimension $\binom{g+1}{2}$
(see \cite[Sec. 3.5.3]{Val} and the references there);
\item The Dirichlet-Voronoi polytope associated to $[\prin^0]$ is the permutahedron of dimension $g$ (see \cite[Ex. 0.10]{Zie}),
which is an extremal Dirichlet-Voronoi polytope in the sense that it has
the maximum possible number of $d$-dimensio\-nal faces among all Dirichlet-Voronoi polytopes of dimension $g$
(see \cite[Sec. 3.3.2]{Val} and the references there);
\item $\prin^0$ is the unique Voronoi cone that is also a
perfect cone  (see \cite{Dic}).
\end{enumerate}
\end{remark}

\subsection{Tropical Torelli map for planar tropical curves}

We begin with the following

\begin{defi}\label{planar}
We say that a tropical curve $C=(\Gamma, w, l)$
(resp. a stable marked graph
$(\Gamma,w)$) is {\it planar} if the underlying graph $\Gamma$
is planar.
\end{defi}

Note that the specialization of a planar tropical curve
is again planar. Therefore it makes sense to give the following
\begin{defi}\label{Mg-planar}
$\Mgpl$ is the stacky subfan of $\Mgt$
consisting of planar tropical curves.
\end{defi}

It is straightforward to check that any planar tropical
curve can be obtained as a
specialization of a 3-regular planar tropical curve.
Therefore we get the following

\begin{remark}
$\Mgpl$ is of pure dimension $3g-3$
with cells $C(\Gamma,w)\subset \R^{|w|}$, for planar stable marked
graphs $(\Gamma,w)$ of genus $g$. A cell $C(\Gamma,w)$ of $\Mgpl$ is
maximal if and only if $\Gamma$ is $3$-regular.
\end{remark}


We want now to describe the image of $\Mgpl$ under the map
$\tgt$. With that in mind, we consider the locus inside
$\Agzon$ formed by the zonotopal cells corresponding to matroids
that are at the same time graphic and cographic. Indeed,
from formulas (\ref{gr-contr}), (\ref{cogr-contr}) and Theorem \ref{mat-zon}(\ref{mat-zon3}),
it follows that the collection of cones
$$\Grcogr:=\{\sigma_Z\: : \:  [\sigma_Z]=[\sigma(M)] \text{ for a
graphic and cographic matroid } M\}$$
is a $\GL_g(\Z)$-admissible decomposition of a closed subcone
of $\Ort$, i.e. $\Grcogr$ satisfies all the properties of Definition
\ref{decompo}  except the last one. Therefore we can give the following

\begin{defi}\label{Gr-cogr}
$\Agcogr$ is the stacky subfan  of
$\Agzon\subset \AgV$ whose cells are of the form $C(M)$, where $M$ is a simple graphic and cographic matroid of rank at most $g$.
\end{defi}

Equivalently, $\Agcogr$ is the intersection of $\Agco$ and $\Aggr$
inside $\Agzon$.
Using Corollary \ref{faces-zon}, Proposition \ref{2dual},
Remark \ref{gr-cogr} and
Proposition \ref{simple-mat-graph}, we get the following

\begin{remark}\label{cells-grcogr}
The cells of $\Agcogr$ are of the form
$$C(M([\Gamma]_2))=C(M^*([\Gamma]_2^*)),$$
for $[\Gamma]_2$ planar and simple and $[\Gamma]_2^*$
the dual $2$-isomorphism class as in (\ref{dual-iso})
(which is therefore planar and $3$-edge-connected by (\ref{genus-dual})).
\end{remark}

\begin{example}\label{small-dim}
We have defined several stacky subfans of  $\AgV$,
namely:
$$\Agcogr\subset \Agco, \Aggr\subset \Agzon\subset \AgV.$$
For $g=2, 3$, they are all equal and they have a unique maximal
cell, namely the principal cell $\Cpr$ associated to the
principal cone $\prin^0$
(see \cite[Chap. 4.2, 4.3]{Val}).
However, for $g\geq 4$, all the above subfans are
different. For example, for $g=4$, we have that
(see \cite[Chap. 4.4]{Val}):
\begin{enumerate}[(i)]
\item $A_4^{{\rm tr, V}}$ has $3$ maximal cells
(of dimension $10$), one of which is $\Cpr$;
\item $A_4^{\rm zon}$ has two maximal cells: $\Cpr$ of
dimension $10$ and $C(M^*([K_{3,3}]_2))$ of dimension
$9$, where $K_{3,3}$ is the complete bipartite graph on
$(3,3)$-vertices;
\item $A_4^{\rm cogr}$ has two maximal cells (of dimension $9$):
$C(M^*([K_{3,3}]_2))$ and \\ $C(M^*([K_5-1]_2^*))$, where $K_5-1$ is the (planar) graph
obtained by the complete simple graph $K_5$ on $5$ vertices by
deleting one of its edges;
\item $A_4^{\rm gr}$ has a unique maximal cell (of dimension $10$),
namely $\Cpr$;
\item $A_4^{\rm gr, cogr}$ has a unique maximal cell (of dimension
$9$): $C(M^*([K_5-1]_2^*))= C(M([K_5-1]_2))$.
\end{enumerate}
Finally, we point out that $\Agzon$ becomes quickly much smaller than
$\AgV$ as $g$ grows: $A_5^{\rm tr, V}$ has 222 maximal cells while
$A_5^{\rm zon}$ only $4$; $A_6^{\rm tr, V}$ has more than $250,000$
maximal cells (although the exact number is still not known) while
$A_6^{\rm zon}$ only $11$ (see \cite[Chap. 4.5, 4.6]{Val} and
\cite[Sec. 9]{DV}).

\end{example}

Now, we can prove the main result of this section.

\begin{thm}\label{tor-planar}
The following diagram
$$\xymatrix{
\Mgpl\ar@{^{(}->}[r] \ar^{\tgt}[d] & \Mgt \ar^{\tgt}[d] \\
\Agcogr \ar@{^{(}->}[r] & \Agco.
}$$
is cartesian.
In particular, the map $\tgt: \Mgpl \to \Agcogr$ is full and of degree one.
\end{thm}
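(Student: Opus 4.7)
The plan is to prove that the square is cartesian set-theoretically, i.e., $\tgt^{-1}(\Agcogr)=\Mgpl$ as subsets of $\Mgt$. Once this is established, the \emph{in particular} assertions will follow formally by restricting the corresponding properties of $\tgt:\Mgt\to \Agco$ proved in Theorems \ref{Sch} and \ref{deg-one}.

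By Lemma \ref{tor-mat}, $\tgt$ sends the cell $C(\Gamma,w)\subset \Mgt$ surjectively onto the cell $C(\widetilde{M^*(\Gamma)})$ of $\Agco$, which by Proposition \ref{simple-mat-graph}(ii) equals $C(M^*(\Gamma^3))$ for any $3$-edge-connectivization $\Gamma^3$ of $\Gamma$. Since $M^*(\Gamma^3)$ is automatically cographic, this target cell lies in $\Agcogr$ precisely when $M^*(\Gamma^3)$ is also graphic, and by Proposition \ref{2dual} combined with Theorem \ref{exi-dual} this happens if and only if $\Gamma^3$ is planar.

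It therefore remains to verify the equivalence: $\Gamma^3$ is planar if and only if $\Gamma$ is planar. The forward implication is immediate because $\Gamma^3$ is a minor of $\Gamma$ and planarity is minor-closed. The reverse follows from the standard facts that a graph is planar iff each of its 2-connected blocks is planar, that the 2-connected blocks of $\Gamma^3$ are the 3-edge-connectivizations of the 2-connected blocks of $\Gamma$ (the separating edges of $\Gamma$ having been contracted), and that the 3-edge-connectivization of a 2-connected graph is planarity-equivalent to the original, since it differs only by collapsing each C1-set to a single edge and the reverse operation---introducing parallel edges in a common face of the embedding---preserves planarity. This combinatorial equivalence is the main technical content of the theorem, though it is essentially standard.

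Finally, for the \emph{in particular} claim: fullness of $\tgt:\Mgpl\to \Agcogr$ is immediate from fullness of $\tgt:\Mgt\to \Agco$ (Theorem \ref{Sch}) combined with the cartesian property, since every cell of $\Agcogr\subset \Agco$ has a cell of $\Mgt$ mapping surjectively onto it and this preimage cell must lie in $\tgt^{-1}(\Agcogr)=\Mgpl$. Degree one descends from Theorem \ref{deg-one} in the same fashion: for a generic point $Q\in \Agcogr$, its unique preimage in $\Mgt$ lies in $\Mgpl$ by the cartesian property, and the induced integral linear map on the corresponding cells is primitive by that theorem.
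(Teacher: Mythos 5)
Your overall architecture is the same as the paper's: reduce the cartesian claim to the cell-level statement via Lemma \ref{tor-mat}, characterize when the target cell lies in $\Agcogr$ by a planarity criterion, and then deduce the ``in particular'' assertions by restricting Theorems \ref{Sch} and \ref{deg-one}. However, your treatment of the key planarity step has a genuine flaw. You route the argument through the $3$-edge-connectivization: $\widetilde{M^*(\Gamma)}=M^*(\Gamma^3)$, so the target cell lies in $\Agcogr$ iff $\Gamma^3$ is planar, and you then must prove that $\Gamma^3$ is planar iff $\Gamma$ is planar. The forward direction (minor-closedness) is fine, but your justification of the reverse direction is wrong as stated: the inverse of contracting the edges of a C1-set is \emph{not} ``introducing parallel edges in a common face of the embedding''. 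Edges of a C1-set are parallel in the \emph{cographic} matroid, i.e.\ they are edges in series in $\Gamma$; adding parallel edges is the dual move. The actual inverse operation is a vertex splitting (series extension) constrained by a $2$-edge-cut condition, and since the members of a C1-set need not be pairwise adjacent, this is not simply a subdivision; that such a split preserves planarity is true but requires an argument (e.g.\ invariance of planarity under $2$-isomorphism, or the matroid formulation), which your proposal does not supply. Since you yourself identify this equivalence as ``the main technical content'', the proof as written has a gap precisely there.

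The paper avoids the detour entirely: by Lemma \ref{tor-mat} the image cell is $C(\widetilde{M^*(\Gamma)})$, and it lies in $\Agcogr$ iff $\widetilde{M^*(\Gamma)}$ is graphic; since simplification of a matroid (Definition \ref{simplifi-mat}) only removes loops and parallel elements, $\widetilde{M^*(\Gamma)}$ is graphic iff $M^*(\Gamma)$ is graphic (a point the paper makes explicit in the proof of Corollary \ref{Nam-conj}), and $M^*(\Gamma)$ is graphic iff $\Gamma$ is planar by Proposition \ref{2dual} and Whitney's Theorem \ref{exi-dual} (Remark \ref{gr-cogr}). Replacing your $\Gamma^3$-equivalence by this matroid-level observation both repairs the gap and shortens the proof; your handling of fullness and degree one then matches the paper's (which simply cites Theorems \ref{Sch} and \ref{deg-one}) and is if anything slightly more detailed.
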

\begin{proof}
The fact that the diagram is cartesian follows from Lemma \ref{tor-mat}
together with the fact that $M^*(\Gamma)$ is graphic if and only
if $\Gamma$ is planar (see \ref{gr-cogr}).
The last assertion follows from the first and the Theorems \ref{Sch},
\ref{deg-one}.
\end{proof}


\subsection{Relation with the compactified Torelli map:
Namikawa's conjecture}\label{Nam-section}

In this last subsection, we use the previous results
to give a positive answer to a problem
posed by Namikawa (\cite[Problem (9.31)(i)]{NamT})
concerning the compactified (classical) Torelli map.

We need to recall first some facts about the classical Torelli
map and its compactification.
Denote by $\Mg$ the coarse moduli space of smooth and projective curves
of genus $g$, by $\Ag$ the coarse moduli space of principally polarized
abelian varieties of dimension $g$. The classical Torelli map
$$ \tg : \Mg\to \Ag,  $$
sends a curve $X$ into its polarized Jacobian $(\Jac(X), \Theta_X)$.

It was known to Mumford and Namikawa (see \cite[Sec. 18]{nam2},
or also \cite[Thm. 4.1]{alex}) that the Torelli
map extends to a regular map (called the compactified Torelli map)
\begin{equation}\label{comp-Torelli}
\tgb: \Mgb\to \Agb
\end{equation}
from the Deligne-Mumford moduli space $\Mgb$ of stable curves of
genus $g$  (see \cite{DM}) to the toroidal compactification $\Agb$
of $\Ag$ associated to the (second)
Voronoi decomposition (see \cite{AMRT}, \cite{NamT} or
\cite[Chap. IV]{FC}). The above map $\tgb$ admits also a modular
interpretation (see \cite{alex}), which was used in \cite{CV2}
to give a description of its fibers.

The moduli space $\Mgb$ admits a stratification into locally closed
subsets parame\-trized by stable weighted graphs $(\Gamma,w)$ of genus
$g$ (see definition \ref{mark-graph}). Namely, for each stable
weighted graph $(\Gamma,w)$ we can consider the locally closed
subset $S_{(\Gamma,w)}\subset \Mgb$ formed by stable curves of
genus $g$ whose weighted dual graph is isomorphic to $(\Gamma,w)$.
Observe that, given a stable curve $X$ with weighted dual graph
$(\Gamma,w)$, any smoothing of $X$ at a subset $S$
of nodes of $X$
has weighted dual graph equal to the specialization
of $(\Gamma,w)$ obtained by contracting the edges corresponding to
the nodes of $S$ (see \ref{ps-trop}).
From this remark, we deduce that:
\begin{equation}\label{strat-Mg}
C(\Gamma,w)\subset \ov{C(\Gamma',w')} \Leftrightarrow
\ov{S_{(\Gamma,w)}} \supset S_{(\Gamma',w')}.
\end{equation}

Similarly, from the general theory of toroidal compactifications
of bounded symmetric domains (see \cite{AMRT} or \cite{NamT}),
it follows that $\Agb$ admits a
stratification into locally closed subsets $S_{C([P])}$,
parametrized by the cells $C([P])$ of $\AgV$.
We have also that
\begin{equation}\label{strat-Ag}
 C([P])\subset C([P']) \Leftrightarrow
\ov{S_{C([P])}} \supset S_{C([P'])}.
\end{equation}

The compactified Torelli map respects the
toroidal structures of $\Mgb$ and $\Agb$ (see \cite[Thm. 4.1]{alex});
more precisely, we have that (compare with Lemma \ref{tor-mat}):
\begin{equation}\label{strata-tor}
\tgb(S_{(\Gamma,w)})\subset S_{C(\w{M^*(\Gamma)})}.
\end{equation}

Given a stacky subfan  $N$ of $\Mgt$
(in the sense of definition \ref{pol-compl}),
consider the union of all the strata $S_{(\Gamma,w)}$ of $\Mgb$ such that
$C(\Gamma,w)\in N$, and call it $U_N$. Similarly for any
stacky subfan of $\AgV$.  It is easily checked, using  formulas (\ref{strat-Mg}) and
(\ref{strat-Ag}), that such a $U_N$ is an open subset
of $\Mgb$ (resp. $\Agb$) containing $\Mg$ (resp. $\Ag$),
and thus it is a partial compactification
of $\Mg$ (resp. $\Ag$).

In particular we define $\Mgpla\subset \Mgb$ as the open subset
corresponding to the stacky subfan
$\Mgpl\subset \Mgt$ and $\Aggrco\subset \Agcogra\subset \Agb$
as the two open subsets corresponding to the two stacky subfans
 $\Agcogr\subset \Agco\subset \AgV$.

Observe that from formula (\ref{strata-tor}) it follows that
the compactified Torelli map $\tgb$ takes values in $\Agcogra$.
Finally we can state the main result of this subsection.

\begin{cor}\label{Nam-conj}
Given a stable curve $X$, we have that $\tgb(X)\in \Aggrco$
if and only if the dual graph $\Gamma_X$ of $X$ is planar.
\end{cor}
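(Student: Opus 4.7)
The plan is to unpack the definition of $\Aggrco$ in terms of strata and then translate the membership condition $\tgb(X) \in \Aggrco$ into a matroid-theoretic statement that matches Whitney's characterization of planarity. First, I would let $X$ be a stable curve with weighted dual graph $(\Gamma_X, w_X)$, so that $X \in S_{(\Gamma_X, w_X)}$. By formula \eqref{strata-tor}, $\tgb(X)$ lies in the stratum $S_{C(\widetilde{M^*(\Gamma_X)})}$; since $\Aggrco$ is by construction the disjoint union of those strata $S_{C([P])}$ with $C([P]) \in \Agcogr$, the condition $\tgb(X) \in \Aggrco$ is equivalent to $C(\widetilde{M^*(\Gamma_X)})$ being a cell of $\Agcogr$.

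Next I would translate this into matroid language. By Definition \ref{Gr-cogr}, $C(M)$ belongs to $\Agcogr$ exactly when $M$ is a simple matroid that is simultaneously graphic and cographic. The matroid $\widetilde{M^*(\Gamma_X)}$ is simple by construction, and by Proposition \ref{simple-mat-graph}(ii) it coincides with $M^*(\Gamma_X^3)$, so it is automatically cographic. The only remaining condition is therefore that $\widetilde{M^*(\Gamma_X)}$ be graphic.

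The heart of the argument is then the short matroid lemma that a matroid $M$ is graphic if and only if its simplification $\widetilde{M}$ is graphic. The forward direction follows from Proposition \ref{simple-mat-graph}(i); for the converse, given a graph $H$ with $M(H) = \widetilde{M}$, one recovers a graph $G$ with $M(G) = M$ by reinstating one loop for every loop of $M$ and duplicating edges to reconstitute each parallel class. Applying this to $M = M^*(\Gamma_X)$ shows that $\widetilde{M^*(\Gamma_X)}$ is graphic iff $M^*(\Gamma_X)$ is graphic. Finally, Proposition \ref{2dual} tells us that $M^*(\Gamma_X)$ is graphic iff $\Gamma_X$ admits an abstract dual, and Whitney's Theorem \ref{exi-dual} identifies this with planarity of $\Gamma_X$. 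Chaining the equivalences yields the corollary. The only step with genuine mathematical content is the simplification lemma; everything else is bookkeeping that matches boundary strata to cells, cells to matroids, and invokes results already established in the preliminaries.
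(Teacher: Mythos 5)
Your proposal is correct and follows essentially the same route as the paper: use formula (\ref{strata-tor}) to place $\tgb(X)$ in the stratum $S_{C(\w{M^*(\Gamma_X)})}$, reduce membership in $\Aggrco$ to $\w{M^*(\Gamma_X)}$ being graphic, pass from the simplification to $M^*(\Gamma_X)$ itself, and conclude via Proposition \ref{2dual} and Whitney's Theorem \ref{exi-dual}. The only difference is that you spell out the easy simplification lemma and the automatic cographic-ness, which the paper leaves implicit.
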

\begin{proof}
From formula (\ref{strata-tor}), it follows that $\tgt(X)\in
S_{C(\w{M^*(\Gamma_X)})}$. Therefore $\tgt(X)$
$\in \Aggrco$
if and only if $\w{M^*(\Gamma_X)}$ is a graphic matroid.
By the definition \ref{simplifi-mat} of the simplification of a matroid,
it follows easily that $\w{M^*(\Gamma_X)}$ is a graphic matroid
if and only if $M^*(\Gamma_X)$ is a graphic matroid.
By combining Proposition \ref{2dual} and Theorem
\ref{exi-dual}, we finally get that $M^*(\Gamma_X)$ is a graphic matroid
if and only if $\Gamma_X$ is planar.
\end{proof}

The part if of the above Corollary was proved (using analytic
techniques) by Namikawa in \cite[Thm. 5]{nam1}. The converse
was posed as a problem in \cite[Problem (9.31)(i)]{NamT}.

\section{Open questions and future plans}

In this section, we want to mention some of the many questions that arise in connection to our
work and on which we hope to come back in a near future:

\begin{enumerate}

\item  It is possible to make sense of the balancing condition for a stacky fan?
In other words, is it possible to define a tropical stacky fan? In particular,
can our moduli spaces $\Mgt$ and $\AgV$ be endowed with the structure of tropical
stacky fans?

\item What can be said about the topology of the tropical moduli spaces
$\Mgt$ and $\AgV$ that we have constructed? Can the study of these tropical topological
spaces share some light on the topology of the classical moduli spaces $\Mg$ and
$\Ag$ and of their compactifications $\Mgb$ and $\Agb$?

\item Generalize the construction of the moduli space $\Mgt$ to the
construction of the moduli space $M_{g,n}^{\rm tr}$ of $n$-pointed
tropical curves of genus $g$. Even more generally, construct the moduli space
of tropical maps $M_{g,n}^{\rm tr}(\R^N, \Delta)$ from $n$-pointed
tropical curves of genus $g$ to $\R^N$ with degree $\Delta$.
(The genus $g=0$ case is solved in \cite{GKM}).

\item  Recently, Lucia Caporaso has constructed a modular compactification $\ov{\Mgt}$ of $\Mgt$ (see \cite[Sec. 6]{Cap}). 
Construct a modular compactification $\ov{\AgV}$ of $\AgV$ and extend the tropical
Torelli map $\tgt$ to a map $\ov{\tgt}:\ov{\Mgt}\to \ov{\AgV}$.

\item (The first half of this problem was suggested to us by Bernd Sturmfels).

Recall that in classical geometry, the class in integral cohomology of
a smooth and projective curve $C$ of genus $g\geq 2$ embedded in its Jacobian
$\Jac(C)$ via an Abel-Jacobi map ${\rm alb}:C \hookrightarrow \Jac(C)$
is equal to
$$[C]=\frac{[\Theta_C]^{g-1}}{(g-1)!}\in H^{g-1}(\Jac(C),\Z) \hspace{1cm} \text{(Poincar\'e
formula)},$$
where $\Theta_C$ is the principal polarization induced by the canonical
theta divisor. Moreover, starting with an arbitrary principally polarized abelian variety
$(A, \Theta)$ of dimension $g\geq 2$,
the Matsusaka-Ran criterion says that the integral cohomological
class $\frac{[\Theta]^{g-1}}{(g-1)!}$ is represented by an effective
$1$-cycle $[D]$ if and only if $(A, \Theta)\cong (\Jac(C), \Theta_C)$ for
a smooth and projective curve $C$ of genus $g$ and $D=C$ embedded via some
Abel-Jacobi map (This criterion is in particular a geometric solution of
the Schottky problem).

In tropical geometry, the intersection in the cohomology ring of a variety
can be replaced by the stable intersection (see \cite{Stur}). Therefore the
following two questions seem very natural:
\begin{enumerate}
\item Is there a tropical Poincar\'e formula? Can such a formula help
to recover geometrically the tropical Torelli Theorem \ref{deg-one}?
\item Is there a tropical Matsusaka-Ran criterion? Can such a criterion
provide a geometric solution to the Schottky problem, complementary
to the combinatorial solution proposed in Theorem \ref{Sch}?
\end{enumerate}

\item Recall that in classical algebraic geometry, a well-known finite cover of $\Mg$ is
the moduli space $\Rg$ parametrizing non-trivial double \'etale covers $\w{C}\to C$ such that
$C$ is a smooth and projective curve of genus $g\geq 2$. There is a map
${\Pr}_g:\Rg\to \mathcal{A}_{g-1}$, called the Prym map,
sending a  double \'etale cover $\w{C}\to C$
into its Prym variety ${\rm Prym}(\w{C}\to C)$. A study of a
compactified rational map (and in particular a study of its indeterminacy locus)
between a modular compactification $\ov{\Rg}$ of $\Rg$
and $\ov{\mathcal{A}_{g-1}}^V$ has been carried out in \cite{ABH}.

We ask for tropical analogous of these classical results:
\begin{enumerate}
\item Construct a space $R_g^{\rm tr}$ parametrizing double \'etale covers
$\w{C}\to C$ between tropical curves such that $C$ has genus $g\geq 2$.
\item Define a tropical Prym map ${\rm Pr}_g^{\rm tr}:R_g^{\rm tr}\to A_{g-1}^{\rm tr, V}$
and study the fibers and the image of ${\rm Pr}_g^{\rm tr}$.
\end{enumerate}

\item Another well-known finite cover of $\Mg$ is
the moduli space $\Sg$ parametrizing spin curves of genus $g$, i.e. pairs
$(C, \eta)$ such that $C$ is a smooth and projective curve of genus $g\geq 2$
and $\eta$ is a theta-characteristic. There is a natural spin-Torelli
${\rm st}_g:\Sg\to \mathcal{N}_{g}$, where $\mathcal{N}_g$ is the moduli
space of dimension $g$ abelian varieties
with a principal theta-level structure, i.e. pairs $(A, \Theta)$ such that $A$
is an abelian variety of dimension $g$ and $\Theta$ is an effective
symmetric divisor defining a principal polarization on $A$.
A study of the corresponding
compactified rational map (and in particular a study of its indeterminacy locus)
between a modular compactification $\ov{\Sg}$ of $\Sg$
and a modular compactification $\ov{\mathcal{N}_g}^V$ of $\Ng$
is obtained in \cite{MV}.

We ask for tropical analogous of these classical results:
\begin{enumerate}
\item Construct a space $S_g^{\rm tr}$ parametrizing tropical spin
curves of genus $g$ and a space $N_g^{\rm tr, V}$ parametrizing
tropical abelian varieties with principal theta-level structure.
\item Define a tropical spin-Torelli map ${\rm st}_g^{\rm tr}:
s_g^{\rm tr}\to N_g^{\rm tr, V}$ and study the fibers and the image
of ${\rm st}_g^{\rm tr}$.
\end{enumerate}

\end{enumerate}


\begin{thebibliography}{BLVSWZ99}



\bibitem[1]{alex1}
V.  Alexeev, Complete moduli in the presence of semiabelian group
action, Ann. of Math. 155 (2002) 611--708.

\bibitem[2]{alex} V. Alexeev,  Compactified Jacobians and Torelli map, Publ. RIMS, Kyoto Univ. 40 (2004) 1241--1265.

\bibitem[3]{ABH} A. Alexeev, C. Birkenhake, K. Hulek, Degenerations of Prym varieties,
J. Reine Ang. Math. 553 (2002) 73—-116.

\bibitem[4]{AR} L. Allermann, J. Rau, First steps in tropical intersection theory.
 Math. Z. 264 (2010) 633-670.


\bibitem[5]{AMRT} A. Ash, D. Mumford, M. Rapoport, Y. Tai,
Smooth compactification of locally symmetric varieties,
Lie Groups: History, Frontiers and Applications, Vol. IV. Math. Sci. Press,
Brookline, Mass., 1975.


\bibitem[6]{or-mat} A. Bj\"orner, M. Las Vergnas, B. Sturmfels, N.
White, G. Ziegler,  Oriented matroids, Encyclopedia of Mathematics
and its Applications 46, Cambridge University Press, Cambridge, 1999.

\bibitem[7]{BM} E. Brugall\'e, G. Mikhalkin, Floor decompositions of tropical curves:
the planar case, Proceedings of the 15th G\"okova geometry-topology conference 2008,
Cambridge, MA: International Press, 64-90 (2009).

\bibitem[8]{BMS1} S. Busonero, M. Melo, L. Stoppino,
Combinatorial aspects of stable curves, Le Matematiche  LXI (2006) 109--141.

\bibitem[9]{BMS2} S. Busonero, M.  Melo, L. Stoppino,
On the complexity group of stable curves, To appear in Adv. Geometry
(Preprint available at  arXiv:0808.1529).




\bibitem[10]{Cap} L. Caporaso, Geometry of tropical moduli spaces and linkage of graphs,
Preprint available at arXiv:1001.2815.

\bibitem[11]{CV1} L. Caporaso, F. Viviani, Torelli theorem
for graphs and tropical curves, Duke Math. Journal 153 (2010) 129--171.


\bibitem[12]{CV2} L. Caporaso, F. Viviani, Torelli theorem
for stable curves. To appear in J. Europ. Math. Soc. (Preprint available at arXiv:0904.4039).

\bibitem[13]{CJM} R. Cavalieri, P. Johnson, H. Markwig,
Tropical Hurwitz numbers,  Journal of Algebraic Combinatorics 32 (2010)  241--265.

\bibitem[14]{DV} V. Danilov, V. Grishukhin, Maximal unimodular
systems of vectors, Europ. J. Combinatorics 20 (1999) 507--526.

\bibitem[15]{DM} P. Deligne, D. Mumford,  The irreducibility
of the space of curves of given genus, Inst. Hautes \'Etudes
Sci. Publ. Math. 36 (1969) 75--109.

\bibitem[16]{DG} M. Deza, V. Grishukhin, Properties of parallelotopes
equivalent to Voronoi's conjecture,  European J. Combin.  25  (2004)
517--533.

\bibitem[17]{DG2} M. Deza, V.  Grishukhin, Voronoi's conjecture
and space tiling zonotopes,  Mathematika  51 (2004) 1--10.

\bibitem[18]{Dic} T. J. Dickson, On Voronoi reduction of positive
quadratic forms, J. Number Theory 4 (1972) 330--341.

\bibitem[19]{Die}
R. Diestel, Graph theory, Graduate Text in Math. 173, Springer-Verlag, Berlin, 1997.

\bibitem[20]{Erd} R. M. Erdahl, Zonotopes, dicings and Voronoi's conjecture on parallelohedra, European J. Combinatorics 20 (1999)
527--549.

\bibitem[21]{ER} R. M. Erdahl, S. S. Ryshkov, On lattice dicing,
European J. Combin.  15 (1994) 459--481.

\bibitem[22]{FC} G. Faltings, C. L.  Chai,
Degeneration of abelian varieties,
With an appendix by David Mumford, Ergebnisse der Mathematik und ihrer Grenzgebiete (3) 22, Springer-Verlag, Berlin, 1990.

\bibitem[23]{FM} S. Fomin, G. Mikhalkin,  Labeled floor diagrams for plane curves,
Preprint available at arXiv:0906.3828v3.

\bibitem[24]{Gat} A. Gathmann,
Tropical algebraic geometry, Jahresber. Dtsch. Math.-Ver. 108 (2006) 3-32.

\bibitem[25]{GKM} A. Gathmann, M.  Kerber, H. Markwig, H.,
Tropical fans and the moduli spaces of tropical curves,
  Compos. Math.  145  (2009)  173--195.

\bibitem[26]{GM2}  A. Gathmann, H. Markwig, The
Caporaso-Harris formula and plane relative Gromov-Witten
invariants in tropical geometry, Math. Ann. 338 (2007)
845--868.

\bibitem[27]{GM} A. Gathmann, H.  Markwig, Kontsevich's formula and the
WDVV equations in tropical geometry, Adv. Math.  217  (2008)  537--560.


\bibitem[28]{GS} M. Gross, B. Siebert,
From real affine geometry to complex geometry, Preprint available at
arXiv:math/0703822.

\bibitem[29]{HT}
A. Hatcher, W.  Thurston, A presentation for the mapping class group of a closed
orientable surface, Topology 19 (1980)  221--237.

\bibitem[30]{Kat} E. Katz,  A tropical toolkit, Expo. Math. 27 (2009)  1--36.

\bibitem[31]{KKMS} G. Kempf, F.  Knudsen, D. Mumford, B.  Saint-Donat, Toroidal embeddings I, Lecture Notes in Mathematics 339,
Springer-Verlag, 1973.

\bibitem[32]{KM} M. Kerber, H. Markwig,  Intersecting
psi-classes on tropical $\mathcal{M}_{0,n}$, Int. Math. Res. Not. IMRN 2009, 221--240.

\bibitem[33]{Igu} J. Igusa, A desingularization problem in theory of
Siegel modular functions, Math. Ann. 168 (1967) 228--260.

\bibitem[34]{IKS} I. Itenberg, V.  Kharlamov, E. Shustin,
Welschinger invariant and enumeration of real rational
curves, Int. Math. Res. Not. 49 (2003) 2639--2653.

\bibitem[35]{MR} H. Markwig; J. Rau, Tropical descendant
Gromov-Witten invariants. Manuscr. Math. 129 (2009) 293-335.

\bibitem[36]{McLS} D. Maclagan, B. Sturmfels,
Introduction to Tropical Geometry, Book in preparation, Preliminary draft available at
http://www.warwick.ac.uk/staff/D.Maclagan/papers/TropicalBook.pdf.

\bibitem[37]{McM} P. McMullen, Space tilings zonotopes, Mathematika
22 (1975) 202-211.

\bibitem[38]{McM2} P. McMullen, Convex bodies which tile space
by translation, Mathematika  27 (1980) 113--121.

\bibitem[39]{MV} M. Melo, F. Viviani, Spin curves
and Theta level structures, In preparation.

\bibitem[40]{MRS} L. Michel, S. S. Ryshkov, M. Senechal, An extension of Voronoi's theorem on primitive parallelohedra,
Europ. J. Combinatorics 16 (1995) 59--63.

\bibitem[41]{MIK4} G. Mikhalkin, Amoebas of algebraic
varieties and tropical geometry, Different faces of geometry
257--300, Int. Math. Ser. (N. Y.) 3, Kluwer/Plenum, New York, 2004.

\bibitem[42]{MIK5} G. Mikhalkin, Enumerative tropical
algebraic geometry in $\R^2$, J. Am. Math. Soc. 18 (2005)
313--377.

\bibitem[43]{MIK3} G. Mikhalkin,  Tropical geometry and its applications,
International Congress of Mathematicians. Vol. II,  827--852,
Eur. Math. Soc., Z\"urich, 2006.

\bibitem[44]{MIK1} G. Mikhalkin, Moduli spaces of rational tropical curves,
Proceedings of G\"okova Geometry-Topology Conference 2006,  39--51,
G\"okova Geometry/Topology Conference (GGT), G\"okova, 2007.

\bibitem[45]{MIK2} G. Mikhalkin, What is$\dots$a tropical curve?
Notices Amer. Math. Soc.  54  (2007) 511--513.

\bibitem[46]{MIK0} G. Mikhalkin,
Introduction to Tropical Geometry (notes from the IMPA lectures in Summer 2007),
Available at arXiv:0709.1049.

\bibitem[47]{MIK} G. Mikhalkin,  Tropical Geometry,
Book in preparation, Preliminary draft available
at http://www.math.toronto.edu/mikha/book.pdf

\bibitem[48]{MZ} G. Mikhalkin, I. Zharkov,
Tropical curves, their Jacobians and Theta functions,
Contemporary Mathematics 465, Proceedings of the International Conference on Curves and Abelian Varieties in honor of Roy
Smith's 65th birthday (2007), 203-231.

\bibitem[49]{Mum} D. Mumford,  The red book of varieties
and schemes, Expanded edition: Includes the Michigan lectures (1974) on ``Curves and their Jacobians" and a contribution by Enrico Arbarello,
Lecture Notes in Mathematics 1358, Springer, Berlin, 1999.

\bibitem[50]{nam1} Y.  Namikawa, On the canonical holomorphic
map from the moduli space of stable curves to the Igusa monoidal
transform, Nagoya Math. Journal 52 (1973) 197--259.

\bibitem[51]{nam2}  Y. Namikawa, A new compactification of the Siegel Space and Degenerations of Abelian Varieties I--II, Math. Ann.
 221  (1976), 97--141,  201--241.

\bibitem[52]{NamT} Y. Namikawa, Toroidal compactification of
Siegel spaces,
Lecture Notes in Mathematics  812, Springer, Berlin, 1980.

\bibitem[53]{NS} T. Nishinou, B. Siebert, Toric degenerations of toric varieties and
tropical curves, Duke Math. J.  135  (2006) 1--51.


\bibitem[54]{Oxl}
J. G. Oxley,  Matroid theory, Oxford Graduate Texts in Mathematics 3, Oxford Science Publications, 1992.

\bibitem[55]{Stur}
J. Richter-Gebert, B. Sturmfels, T.  Theobald, First steps in tropical geometry, in: Proc. Conference on Idempotent mathematics and mathematical
physics, Vienna 2003 (G.L. Litvinov and V.P. Maslov, eds.),
Providence, RI: American Mathematical Society (AMS), Contemporary Mathematics 377 (2005) 289-317.

\bibitem[56]{She} G. C. Shephard, Space-filling zonotopes,
Mathematika 21 (1974) 261--269.

\bibitem[57]{SB} N. I. Shepherd-Barron, Perfect forms and the moduli
space of abelian varieties, Invent. Math.  163  (2006) 25--45.

\bibitem[58]{SS1} D. Speyer, B. Sturmfels:
Tropical mathematics,  Preprint available at
arXiv:math/0408099.

\bibitem[59]{SS2} D. Speyer, B. Sturmfels, The tropical Grassmannian,
 Adv. Geom.  4  (2004) 389--411.



\bibitem[60]{torelli}
R. Torelli,  Sulle variet\`a di Jacobi, Rendiconti della reale accademia dei Lincei serie 22 (1913) 98--103.

\bibitem[61]{Tsu} Y. Tsukui, Transformations of cubic graphs, J. Franklin Inst. 333
(1996) 565-575.

\bibitem[62]{Val} F. Vallentin, Sphere coverings, Lattices and Tilings
(in low dimensions), PhD Thesis, Technische Universit\"at M\"unchen 2003,
Available at  http://tumb1.ub.tum.de/publ/diss/ma/2003/vallentin.pdf

\bibitem[63]{Val2} F. Vallentin, A note on space tiling zonotopes,
Preprint available at arXiv:math/0402053.

\bibitem[64]{Vor} G. F. Voronoi,  Nouvelles applications des
param\'etres continus \'a la th\'eorie de formes quadratiques - Deuxi\'eme
m\'emoire, J. f\"ur die reine und angewandte Mathematik 134
(1908) 198--287, 136 (1909) 67--178.

\bibitem[65]{Whi}
H. Whitney, $2$-isomorphic graphs, Amer. Journ. Math. 55 (1933)
245--254.

\bibitem[66]{Zie} G. Ziegler, Lectures on polytopes,
Graduate Texts in Mathematics 152,
Springer-Verlag, New York, 1995.

\end{thebibliography}
\end{document}